\newtheorem{thm}{Theorem}[section]
\newtheorem{lem}[thm]{Lemma}
\newtheorem{rem}[thm]{Remark}
\newtheorem{dfn}[thm]{Definition}
\def\ra{{\rm Range}}
\def\v{{\bf v}}
\def\b{{\bf b}}
\def\d{{\bf d}}
\def\e{{\bf e}}
\def\A{{\bf A}}
\def\D{{\bf D}}
\def\I{{\bf I}}
\def\x{{\bf x}}
\def\y{{\bf y}}
\def\z{{\bf z}}
\def\u{{\bf u}}
\def\s{{\bf s}}
\def\M{{\bf M}}
\def\P{{\bf P}}
\def\R{{\mathbb R}}
\def\Null{{\rm Null}}
\def\dt{{\rm dist}}
\DeclareMathOperator*{\argmin}{argmin}
\newcommand\norm[1]{\left\lVert#1\right\rVert}%
\def\inprod#1#2{\langle#1,\,#2\rangle}
\author{Rujun Jiang\thanks{School of Data Science, Fudan University, Shanghai, China, rjjiang@fudan.edu.cn}
\and Xudong Li\thanks{School of Data Science, and Shanghai Center for Mathematical Sciences, Fudan University, Shanghai, China, lixudong@fudan.edu.cn}
}
\title{H\"olderian error bounds  and Kurdyka-{\L}ojasiewicz inequality for the trust region subproblem}
\begin{document}

\maketitle
\begin{abstract}
In this paper, we study the local variational geometry of the optimal solution set of the trust region subproblem (TRS), which minimizes a general, possibly nonconvex, quadratic function over the unit ball.
Specifically, we demonstrate that a H\"olderian error bound holds globally for the TRS with modulus 1/4 and the Kurdyka-{\L}ojasiewicz (KL) inequality holds locally for the TRS with a KL exponent 3/4 at any optimal solution.
We further prove that unless in a special case, the  H\"olderian error bound modulus, as well as the KL exponent, is 1/2.
Finally, as an application, based on the obtained KL property, we further show that projected gradient methods studied in [A. Beck and Y. Vaisbourd, SIAM
J. Optim., 28 (2018), pp. 1951--1967] for solving the TRS achieve a local sublinear or even linear rate of convergence.
\end{abstract}

\section{Introduction}
In this paper, we consider the following trust region subproblem (TRS)
\begin{eqnarray*}
(\rm P_0)& \min&f(\x):= \x^T\A\x-2\b^T\x\\
&\rm s.t.&g(\x):=\|\x\|^2-1\le0,
\end{eqnarray*}
where $\A$ is an $n\times n$ nonzero real symmetric matrix, $\b\in \R^n$ and
$\|\cdot\|$ denotes the Euclidean $l_2$ norm. To avoid the trivial case, we make the blanket assumption that $n\ge 2$. Let $S_0$ be the nonempty optimal solution set of $\rm(P_0)$ and $f^*$ be the optimal value.

Problem $\rm(P_0)$ first arises as a subproblem in the trust region
 method for nonlinear optimization \cite{conn2000trust,yuan2015recent}, and also admits applications in robust optimization \cite{ben2009robust} and  least squares problems \cite{zhang2010derivative}. The generalized  trust region subproblem, where the constraint is a general quadratic inequality, is also well studied in the literature \cite{more1993generalizations,pong2014generalized,jiang2018socp,jiang2019novel,jiang2018linear}.
When $\A$ is not positive semidefinite, problem $\rm(P_0)$ is a nonconvex problem and may have a local non-global optimal solution \cite{martinez1994local}.
However, problem $\rm(P_0)$ enjoys hidden convexity and the strong duality holds due to the celebrated S-lemma \cite{yakubovich1971s}.
Various methods have been developed in the literature for solving the TRS, e.g., a safeguarded Newton's method by solving the so-called secular equation \cite{more1983computing},  generalized Lanczos methods and its recent variants \cite{gould1999solving,zhang2017generalized,zhang2018nested} and a  parametrized
eigenvalue approach based on semidefinite programming and duality theory \cite{rendl1997semidefinite}, to name a few.
Hazan and Koren \cite{hazan2016linear} proposed the first linear-time algorithm with complexity $\tilde O(\frac{N}{\sqrt{\epsilon}})$ for the TRS to achieve an $\epsilon$ optimal solution, where $N$ is the number of nonzero entries in the input.
Wang and Xia \cite{wang2017linear}  and Ho-Nguyen and Kilinc-Karzan \cite{ho2017second}  presented a linear-time algorithm  to solve the TRS by applying Nesterov's accelerated gradient descent algorithm to a convex reformulation of $\rm(P_0)$, where such a reformulation was first proposed, to the best of our knowledge, by Flippo and Jansen \cite{flippo1996duality}.
Very recently, Beck and Vaisbourd \cite{beck2018globally} showed that a family of first-order methods for solving problem $\rm(P_0)$, including the projected and conditional gradient methods,  converges to a global optimal solution under proper initialization.
However, the convergence rates for these methods are still largely unknown.

Error bounds \cite{luo1993error,pang1997error}  and the Kurdyka-{\L}ojasiewicz (KL) inequality \cite{attouch2010proximal} are widely used in the literature for analyzing rate of convergence for various optimization algorithms and for understanding the local variational geometry of the solution set of an optimization problem.
In this paper, we devote ourself to a thorough understanding of error bounds and the KL inequality for the TRS, which, as far as we know, are not available in the literature. Then, based on these results, we conduct an analysis of the convergence rate of projected gradient methods studied in \cite{beck2018globally}.
{Before stating the defintions of the H\"olderian error bounds and the KL inequality for the TRS, let us first define $B$ as the unit norm ball, i.e., $B := \{ \x\in\R^n \mid \norm{\x} \le 1\}$ and $\delta_{B}$ as the corresponding indicator function}  $\delta_B(\x)=\left\{\begin{array}{ll}0,&\text{if }\|\x\|\le1,\\
+\infty,&\text{otherwise.}\end{array}\right.$
\begin{dfn}[H\"olderian error bounds for the TRS]
        \label{def:errorbound}
        The TRS is said to satisfy the global error bound condition with modulus $\rho$ if there exists a constant $\tau >0$ such that
        \[
        {\rm dist}(\x,S_0)\le \tau \big(f(\x) - f^* + \delta_B(\x)\big)^{\rho}, \quad \forall\, \x\in \R^n.
        \]
\end{dfn}
\noindent Notice that Definition \ref{def:errorbound} is sometimes referred to as the H\"olderian growth condition. Specifically, if $\rho = 1/2$, it is exactly the quadratic growth condition \cite{dontchev2009implicit}; and if $\rho = 1$, the objective $f(\x) + \delta_B(\x)$ is said to have weak sharp minima \cite{burke1993weak}. We shall emphasize that the residual function used in Definition \ref{def:errorbound}, i.e., the right-hand-side of the inequality, corresponds to the difference between the objective function value at any given point and the optimal value of the TRS.
This is different from (sub)gradient based error bounds, e.g., Luo-Tseng error bounds \cite{luo1993error}, where residual functions associated with certain (sub)gradient information are used in the definitions.
\begin{dfn}[KL inequality for the TRS]
        \label{def:KL}
        The TRS is said to satisfy the KL inequality at $\x^*\in S_0$ with an exponent $\varrho\in[0,1)$, if there exist $\tau >0$ and $\epsilon >0$ such that
        \[
        \big(f(\x) - f^* + \delta_{B}(\x)\big)^{\varrho} \le \tau {\rm dist}(-\nabla f(\x), N_B(\x)), \quad \forall \, \x\in B(\x^*,\epsilon),
        \]
        {where $N_B(\x)$ is the normal cone of $B$ at $\x$, $B(\x^*,\epsilon)$ is the $\epsilon$-neighborhood of $\x^*$, i.e., $B(\x^*,\epsilon) = \left\{\x\in \R^n \mid \norm{\x - \x^*}\le \epsilon \right\}$,
        {and by convention the distance of any given point to an empty set is defined as $\infty$}.}
\end{dfn}
\noindent We note that Definition \ref{def:KL} is inherited from the definition given in \cite{bolte2017error}, except  that our definition focuses only on optimal solutions instead of general stationary points.
Existing works of error bounds and the KL inequality related to the TRS are H\"olderian error bounds for convex quadratic inequality systems  \cite{wang1994global}  (see also Subsection 3.1),  convex piecewise quadratic functions \cite{li1995error}, a system consisting of a nonconvex quadratic function and a polyhedron  \cite{luo2000error}, and the KL property for spherical constrained quadratic optimization problems \cite{liu2016quadratic,gao2016ojasiewicz} and spherical constrained quartic-quadratic optimization problems \cite{zhang2019geometric}. However, none of the above results can be directly applied to the TRS due to the existence of the possibly nonconvex and nonhomogeneous quadratic objective function and the unit ball constraint.

Recently, \cite{bolte2017error} studied the relations between H\"olderian error bounds and the KL inequality for convex problems. Specifically, they showed that for convex problems, error bounds with moderate residual functions are equivalent to the KL inequalities and the sum of the H\"olderian error bound modulus and the KL exponent equals one. See also \cite{aragon2008characterization,cui2019r,li2018calculus,drusvyatskiy2018error,drusvyatskiy2014second} for other related discussions and results. {As far as we know}, all the available equivalent relations between H\"olderian error bounds and the KL inequality are obtained under the convexity assumption.
For nonconvex problems or even the special nonconvex TRS, however, such relations remain largely unknown.

In this paper, by combining the local geometry of the optimal solution set $S_0$ and the elegant H\"olderian error bound results for convex quadratic inequality systems in \cite{wang1994global}, we are able to obtain a comprehensive
characterization of  a H\"olderian error bound for the TRS.
Specifically, it can be shown that the H\"olderian error bound  holds globally  with the modulus $\rho = 1/4$
for the \emph{TRS-ill case} (to be defined in \eqref{eq:con}) and otherwise, $\rho = 1/2$.
Then, based on the obtained error bound results, we are able to derive the KL inequality for the TRS.
We show that for the TRS, the KL inequality always holds locally  with the KL exponent $\varrho = 3/4$ at any optimal solution (if the TRS is convex, the KL inequality in fact holds globally).
More precisely, the KL exponent is 3/4 if we are dealing with the \emph{TRS-ill case}  and 1/2 otherwise at any optimal solution, i.e., the sum of the KL exponent $\varrho$ and the H\"olderian error bound modulus $\rho$ always equals one for the TRS. Hence, we successfully extend the equivalence between error bounds  and the KL inequality from the convex problems \cite{bolte2017error}  to the nonconvex TRS.
We shall emphasize here that for the TRS, both error bounds and the KL inequality results, as well as their relations, are new in the literature.
Equipped with these thorough understandings, we are able to derive convergence rate results for algorithms for solving the TRS.
As an illustration, the convergence rate of {projected gradient methods considered in \cite{beck2018globally} is studied.
Specifically, we show that projected gradient methods converge to a global optimal solution locally sublinearly in the \emph{TRS-ill case}  and  linearly otherwise.

The remaining of this paper is organized as follows. In Section 2, we review some existing results in the literature that will be used in our proof. Then, we conduct a thorough analysis about the H\"olderian error bound and the KL inequality for the TRS in Sections 3 and 4, respectively.
In Section 5, we study the convergence rate of projected gradient methods for solving the TRS with the help of the KL inequality.
We conclude our paper in Section 6.

\textbf{Notation.}
For any vector $\x\in\R^n$, we use $[\x]_+$ to denote its positive part. We use $(\cdot)^\dagger$ to denote the \textit{Moore-Penrose} pseudoinverse of a matrix. For any given nonempty closed set $C$, the distance between any given vector $\x\in\R^n$ to $C$ is denoted by ${\rm dist}(\x,C) := \min_{\y\in C} \norm{\x - \y}$. Meanwhile, define the possibly set-valued projection mapping $\Pi_C:\R^n \rightrightarrows \R^n$ as
$
\Pi_{C}(\x) = \left\{\v \in \R^n \mid \norm{\x - \v} = {\rm dist}(\x,C) \right\}.
$
{We define $\bf 0$ as the vector (or matrix) of all zeros and its dimension will be clear from the context.}

\section{Preliminaries}\label{sec:pre}
We recall some basic properties associated with the TRS.
Let $\A = \P{\mathbf \Lambda} \P^T$ be the spectral decomposition of $\A$ with $\P$ being the orthogonal matrix and ${\mathbf \Lambda} = {\rm Diag}(\lambda)$ being the diagonal matrix with {$\lambda_1\le \cdots \le \lambda_n$}.
When $\lambda_1 <0$ (i.e., nonconvex trust region subproblems), we consider the following convex relaxation:
\begin{align*}
(\rm P_1) \quad  \min&\ \tilde f(\x):= \x^T(\A-\lambda_1\I)\x-2\b^T\x+\lambda_1\\
{\rm s.t.}& \ \|\x\|^2\le1.
\end{align*}
Problem ${\rm (P_1)}$ is regarded as a relaxation of ${\rm (P_0)}$  since
\begin{equation}
\label{eq:fandtf}
[g(\x)]_+ = [\norm{\x}^2 - 1]_+ = 0 \quad \mbox{ and } \quad \tilde f(\x)= f(\x)-\lambda_1(\x^T\x-1)\le f(\x)
\end{equation}
whenever $\|\x\|\le1$ and $\lambda_1 <0$.
We shall emphasize here that the relaxation $({\rm P_1})$ plays a central role in our subsequent analysis.
Throughout this paper, we define the solution set of the problem  $\rm(P_1)$  as $S_1$.
We summarize in the following lemma the corresponding results obtained in \cite[Lemmas 1 and 2]{flippo1996duality} to reveal the relations between $S_0$ and $S_1$.
\begin{lem}
        \label{lemma:P1}
        {Suppose $\lambda_1<0$. Then we have the following:
           \begin{itemize}
           \item $\norm{\x^*}=1, \forall \, \x^*\in S_0$,
           \item $\tilde f(\x^*) = f(\x^*) $ for any $\x^*\in S_0$, and
           \item  $S_0 = S_1 \cap \{\x \in \R^n \mid \norm{\x} = 1\}$.
           \end{itemize}}
\end{lem}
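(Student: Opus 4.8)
The three assertions are naturally proved in the order listed, since each relies on the previous one, and the single recurring tool is the identity $\tilde f(\x)=f(\x)-\lambda_1(\norm{\x}^2-1)$ from \eqref{eq:fandtf} together with the standing hypothesis $\lambda_1<0$.

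First I would show that the ball constraint is active at every optimal solution of $\rm(P_0)$. Suppose, for contradiction, that some $\x^*\in S_0$ satisfies $\norm{\x^*}<1$. Let ${\bf v}$ be a unit eigenvector of $\A$ associated with $\lambda_1$, and expand
\[
f(\x^*+t{\bf v})=f(\x^*)+2t\,{\bf v}^T(\A\x^*-\b)+\lambda_1 t^2 .
\]
Choosing the sign of $t$ so that $t\,{\bf v}^T(\A\x^*-\b)\le 0$ (any sign works when this inner product vanishes), we obtain $f(\x^*+t{\bf v})\le f(\x^*)+\lambda_1 t^2<f(\x^*)$ for every $t\neq 0$; and since $\norm{\x^*}<1$, the point $\x^*+t{\bf v}$ stays feasible for $\rm(P_0)$ for all sufficiently small $|t|$. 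This contradicts $\x^*\in S_0$, so $\norm{\x^*}=1$.

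The second assertion is then immediate: substituting $\norm{\x^*}^2=1$ into \eqref{eq:fandtf} yields $\tilde f(\x^*)=f(\x^*)$ for every $\x^*\in S_0$. For the third assertion I would prove the two inclusions separately, noting that $\rm(P_0)$ and $\rm(P_1)$ share the feasible set $B$. For ``$\subseteq$'', let $\x^*\in S_0$; then $\norm{\x^*}=1$ by the first assertion, and for any $\x\in B$ we have $\tilde f(\x)=f(\x)-\lambda_1(\norm{\x}^2-1)\ge f(\x)\ge f(\x^*)=\tilde f(\x^*)$, using $\lambda_1<0$, feasibility of $\x$, optimality of $\x^*$ for $\rm(P_0)$, and the second assertion; hence $\x^*\in S_1$, so $\x^*\in S_1\cap\{\x\in\R^n\mid\norm{\x}=1\}$. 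For ``$\supseteq$'', let $\x\in S_1$ with $\norm{\x}=1$; then $\x$ is feasible for $\rm(P_0)$, and for any $\y\in B$ we have $f(\y)\ge\tilde f(\y)\ge\tilde f(\x)=f(\x)$ by \eqref{eq:fandtf}, optimality of $\x$ for $\rm(P_1)$, and $\norm{\x}=1$; hence $\x\in S_0$.

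There is no substantial obstacle in this argument; the only point needing a little care is the feasibility-preserving perturbation in the first assertion, where one must pick the sign of the step to avoid increasing $f$ along the linear term and keep the step small enough to remain inside the open ball. (Alternatively, the first assertion also follows from the second-order necessary optimality conditions at an interior minimizer, which would force $\A\succeq{\bf 0}$, contradicting $\lambda_1<0$.)
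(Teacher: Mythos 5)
Your argument is correct and complete. The paper does not supply a proof of this lemma; it simply cites Lemmas 1 and 2 of Flippo and Jansen (1996), so there is no in-paper argument to compare against. What you have written is a clean self-contained derivation: the perturbation along an eigenvector of $\lambda_1$ (with the sign of $t$ chosen so the linear term is nonincreasing) correctly rules out interior optima; the second item is then immediate from $\tilde f(\x)=f(\x)-\lambda_1(\norm{\x}^2-1)$; and the two-way inclusion for the third item uses exactly the right chain of inequalities $\tilde f(\x)\ge f(\x)\ge f(\x^*)=\tilde f(\x^*)$ and $f(\y)\ge\tilde f(\y)\ge\tilde f(\x)=f(\x)$ over the common feasible ball. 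The parenthetical remark that one could instead invoke the second-order necessary condition $\A\succeq{\bf 0}$ at an interior minimizer is also valid and is in fact the more standard way this first item is established in the trust-region literature.
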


Lemma \ref{lemma:P1} also implies the well-known  optimality conditions \cite{flippo1996duality,conn2000trust} for the TRS, i.e.,
$\x^*\in \R^n$ is an optimal solution to problem $\rm(P_0)$ if and only if for some $\lambda^*\in\R$, $(\x^*,\lambda^*)$ satisfies the following KKT conditions:
\begin{eqnarray*}
  \|\x^*\|^2&\le& 1, \\
  (\A+\lambda^*\I)\x^*&=&\b, \\
  \A+\lambda^*\I&\succeq& {\bf 0},\\
  \lambda^*&\ge&0,\\
  \lambda^*(1-\|\x^*\|^2)&=&0~~~ (\text{complementary slackness}).
\end{eqnarray*}

For the noncovnex TRS, i.e., $\lambda_1 <0$, it is  well-known
 that the problem can be categorized into easy and hard cases\footnote{We should point out that in this paper we use the categories of the easy and hard cases only for the nonconvex case, which is slightly different from the categories in \cite{fortin2004trust}.} (\cite{fortin2004trust}).
A brief review about the two cases is given as follows:
\begin{enumerate}
        \item In the easy case, $\b\not\perp\Null(\A-\lambda_1\I)$, which implicitly implies that $\lambda^*>-\lambda_1$. In this case, the optimal solution is unique, and is given by
        $\x^*=(\A+\lambda^*\I)^{-1}\b$ with $\|\x^*\|=1$. %
        \item In the hard case, $\b\perp\Null(\A-\lambda_1\I)$.  In this case, the optimal solution may not be unique. In fact, the optimal solution  is given by either $\x^*=(\A+\lambda^*\I)^{-1}\b$ for some optimal Lagrangian multiplier $\lambda^*>-\lambda_1$ (or called hard case 1 in \cite{fortin2004trust})
        or
        $\x^*=(\A-\lambda_1\I)^{\dagger}\b+\v$, where $\v\in\Null(\A-\lambda_1\I)$ such that $\|\x^*\|=1$ (or called hard case 2 in \cite{fortin2004trust}). Particularly, the case with $\v=\bf0$ is called hard case 2 (i), and the case with with $\v\neq\bf 0$ is called hard case 2 (ii).
\end{enumerate}
We summaries the above characterizations in the following table.

\begin{center}
\begin{tabular}{cccc}
	\toprule
	 Easy case &  Hard case 1 & Hard case 2 (i) & Hard case 2 (ii) \\
	\midrule
	$\b\not\perp\Null(\A-\lambda_1\I)$ & $\b\perp\Null(\A-\lambda_1\I)$ & $\b\perp\Null(\A-\lambda_1\I)$, & $\b\perp\Null(\A-\lambda_1\I)$, \\[2pt]
	(implies $\lambda^* > -\lambda_{1}$) &
	and $\lambda^* > -\lambda_{1}$
	&  $\lambda^* = -\lambda_{1}$ &  $\lambda^* = -\lambda_{1}$\\[2pt]
	& & and $\norm{(\A-\lambda_1\I)^{\dagger}\b} = 1$  & and $\norm{(\A-\lambda_1\I)^{\dagger}\b} < 1$ \\[2pt]
	\bottomrule
\end{tabular}
\captionof{table}{Different cases for the nonconvex TRS, i.e., $\lambda_1 < 0$.}
\end{center}

\section{H\"olderian error bounds for the TRS}
In this section, we present our main results on H\"olderian error bounds for the TRS. Mainly, our analysis will be divided into two cases.
First, we consider the convex case, i.e., the case with $\lambda_1 \ge 0$. The more challenging nonconvex case with $\lambda_1 <0$ will be discussed later.

\subsection{Case with $\lambda_1\ge0$}
In this case, problem ${\rm (P_0)}$ is convex and H\"olderian error bounds for the TRS can be obtained by applying the elegant error bound results derived in \cite{wang1994global} for convex quadratic inequalities.
Let $[m]:=\{1,2,\ldots,m\}$. We recall the main definitions and results in \cite{wang1994global}.
\begin{dfn}
        \label{dfn:singular}
        Consider the inequality system
        \[q_i(\x)\le0, \quad \forall i\in[m].\]
        An inequality $q_i(\x) \le0$ in the system is said to be singular if $q_i(\x) = 0$ for any solution $\x$ to the system. Thus an inequality $q_i(\x) \le 0$ is nonsingular if there is a solution $\x_i$ to the system such that $q_i(\x_i)<0$. If every inequality in the system is singular, we say that the inequality system is singular.
\end{dfn}
\begin{dfn}
        \label{def:critical}
        Let S be a singular system of inequalities. We say that S is critical, if either
        one of the following two conditions holds:
        \begin{enumerate}
                \item at most one of the inequalities is nonlinear; or,
                \item after any one of the nonlinear inequalities is deleted, all the remaining nonlinear
                inequalities become nonsingular.
        \end{enumerate}
\end{dfn}

\begin{dfn}
        \label{def:irregular}
        An inequality in a system is called irregular if it is nonlinear, singular, and
        contained in no critical subsystem.
\end{dfn}
The following definition defines a concept of the degree of singularity, which will be used to determine the modulus of H\"olderian error bounds for a convex quadratic inequality system.
\begin{dfn}
        \label{def:degreeofsingularity}
        Let
        \[q_i(\x)\le0,\quad \forall i\in[m].\]
        be a system of inequalities. If there is no nonlinear, singular inequality, we define the
        degree of singularity of this system to be zero. If there is at least one such inequality, we
        define the degree of singularity of the system to be one plus the number of irregular
        inequalities.
\end{dfn}
The main technical result we will use is the following global error bound for convex quadratic inequality systems.
\begin{lem}[Theorem 3.1 in \cite{wang1994global}]
        \label{lem:eb}
        Suppose
        \[q_i(\x)\le0, \quad \forall i\in[m]\]
        is a convex quadratic system with a nonempty solution set $S$ and let $[m]=K\bigoplus J$,
        where  $K$ is the index set of all the nonsingular constraints and $J$ is the index set of all the singular constraints.
        Then there exists a constant $\tau>0$ such that,
        \begin{equation}
        \label{eq:degree}
        \dt(\x,S)\le \tau\left(\sum_{i=1}^m [q_i(\x)]_++\sum_{j\in J}[q_j(\x)]_+ ^{1/2^d}\right),{\quad \forall \x\in \R^n.}
        \end{equation}
        where $d$ is the degree of singularity of the system\footnote{{The original inequality in \cite[Theorem 3.1]{wang1994global} is $\dt(\x,S)\le \tau(\|[q_K(\x)]_+\|+\|[q_J(\x)]_+\|+\|[q_J(\x)]_+\|^{1/2^d})$, from which inequality \eqref{eq:degree} follows directly with a possibly rescaling of the constant $\tau$.} }.
\end{lem}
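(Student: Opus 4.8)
\noindent\emph{Proof proposal.} The plan is to prove Lemma~\ref{lem:eb} by induction on the degree of singularity $d$, following the line of the original argument in \cite{wang1994global}. Two auxiliary facts drive everything. First, Hoffman's lemma supplies a Lipschitzian error bound for the polyhedron cut out by the linear inequalities. Second, for a single convex quadratic $q$ with $\{q\le0\}\neq\emptyset$ one has $\dt(\x,\{q\le0\})\le\tau\big([q(\x)]_+ + [q(\x)]_+^{1/2}\big)$; this elementary one-constraint bound is where each factor $1/2$ in the exponent is generated, since a convex quadratic that is active on a full-dimensional set grows only linearly near the boundary, whereas near a point where it is \emph{singular} it grows quadratically in the transversal directions, forcing the square root. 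A third, routine ingredient reduces ``global'' to ``on the region $\{\x\in\R^n : \dt(\x,S)\le R\}$'' for a fixed $R$: for $\x$ with $\dt(\x,S)$ large, a recession-direction argument — using that each $q_i$ is an honest convex quadratic on every line — shows $\max_i[q_i(\x)]_+$ grows at least linearly in $\dt(\x,S)$, so the term $\sum_i[q_i(\x)]_+$ alone dominates $\dt(\x,S)$ outside that region.

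\emph{Base case $d=0$.} Here no nonlinear inequality is singular, so each nonlinear $q_i$ admits a Slater-type point $\x_i$ with $q_i(\x_i)<0$. I would establish the Lipschitzian bound $\dt(\x,S)\le\tau\sum_i[q_i(\x)]_+$ on $\{\dt(\cdot,S)\le R\}$ by combining the single-constraint Lipschitz bounds (the square-root term above is dormant near the boundary of a set with nonempty interior) with a bounded-linear-regularity argument: the points $\x_i$ together with Hoffman's lemma for the polyhedral part provide the constraint qualification under which the finite family of one-constraint error bounds can be glued into a single bound for the whole intersection. Combined with the recession reduction, this yields \eqref{eq:degree} for $d=0$.

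\emph{Inductive step.} Assume $d\ge1$ and the statement for all convex quadratic systems of degree of singularity at most $d-1$. Since $d\ge1$, there is an irregular inequality $q_{j_0}$ (Definitions~\ref{def:critical}--\ref{def:degreeofsingularity}); let $(\Sigma')$ be obtained by deleting $q_{j_0}$ (and, if necessary, a carefully chosen further set of inequalities), and let $\widehat S\supseteq S$ denote its solution set. The pivotal claim — which I expect to be the hard part — is that $(\Sigma')$ has degree of singularity at most $d-1$; the notion of an \emph{irregular} inequality (nonlinear, singular, contained in no critical subsystem) is precisely engineered to make this true, but verifying it requires tracking how singularity and criticality of the surviving inequalities change under the deletion, and in particular ruling out that some previously non-irregular inequality becomes irregular. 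Granting the claim, the induction hypothesis applied to $(\Sigma')$ bounds $\dt(\x,\widehat S)$ by $\tau'\big(\sum_i[q_i(\x)]_+ + \sum_j[q_j(\x)]_+^{1/2^{\,d-1}}\big)$; picking $\widehat\x\in\Pi_{\widehat S}(\x)$ and using the local Lipschitz behaviour of $q_{j_0}$ along the segment from $\x$ to $\widehat\x$ gives $[q_{j_0}(\widehat\x)]_+\le[q_{j_0}(\x)]_+ + L\,\dt(\x,\widehat S)$; and since $q_{j_0}$ is a convex quadratic vanishing exactly on $S$ within $\widehat S$, the one-constraint square-root bound yields $\dt(\widehat\x,S)\le\tau''\,[q_{j_0}(\widehat\x)]_+^{1/2}$. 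Feeding these into $\dt(\x,S)\le\dt(\x,\widehat S)+\dt(\widehat\x,S)$, using $\sqrt{a+b}\le\sqrt a+\sqrt b$ and, on the bounded region, $t^{1/2}\le C\,t^{1/2^{\,d}}$ to homogenize exponents, absorbs everything into \eqref{eq:degree} with degree $d$: the square root of a $1/2^{d-1}$-power becomes a $1/2^{d}$-power, $q_{j_0}$ is singular so its contribution legitimately lands in the $\sum_{j\in J}$ sum, and the linear term $\sum_i[q_i(\x)]_+$ carries the large-violation part; the recession reduction then promotes this to the global bound.

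In short, the two genuine difficulties are the combinatorics of the inductive step — showing that peeling off an irregular inequality costs at most one unit of singularity, which is the raison d'\^etre of Definitions~\ref{def:critical}--\ref{def:degreeofsingularity} — and making the base-case bound global, i.e., the recession analysis forbidding sequences $\x_k$ with $\dt(\x_k,S)\to\infty$ yet $\max_i[q_i(\x_k)]_+\to0$; both rely essentially on convexity and on the $q_i$'s having degree exactly two.
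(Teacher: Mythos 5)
The paper does not actually prove this lemma: it is invoked as a black-box citation of Theorem 3.1 in Wang and Pang (1994), and the only accompanying argument is the footnote noting that the original norm-form bound $\dt(\x,S)\le \tau\big(\|[q_K(\x)]_+\|+\|[q_J(\x)]_+\|+\|[q_J(\x)]_+\|^{1/2^d}\big)$ yields the summed form \eqref{eq:degree} after rescaling $\tau$. Your proposal is therefore not an alternative to anything in the paper; it is an attempted reconstruction of the cited external theorem, which lies entirely outside what the authors undertake to prove.

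Read on its own terms, the reconstruction has a structural gap in the inductive step. You write that $d\ge1$ guarantees an irregular inequality $q_{j_0}$ to delete, but by Definition~\ref{def:degreeofsingularity} the degree of singularity equals one plus the number of irregular inequalities whenever at least one nonlinear singular inequality exists; hence $d=1$ corresponds to a singular system with \emph{zero} irregular inequalities (every nonlinear singular constraint already lies in some critical subsystem). In that case there is nothing to peel off, your $(\Sigma')$ does not exist, and the induction cannot advance from $d=0$ to $d=1$. The $d=1$ case --- critical singular systems in the sense of Definition~\ref{def:critical} --- must be established directly (roughly, by reducing to the at-most-one-nonlinear-inequality subcase, or to the subcase where deleting any one nonlinear inequality renders the rest nonsingular, and proving the single square-root bound there), and only then can the induction on the number of irregular inequalities proceed for $d\ge2$. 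The remaining ingredients you list --- Hoffman's lemma for the polyhedral part, the one-constraint square-root estimate, the recession argument for large residuals, the triangle inequality through the relaxed solution set with exponent homogenization --- are in the right spirit, but the missing $d=1$ base is exactly where the notions of \emph{critical} and \emph{irregular} earn their keep and cannot be skipped.
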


We note that one important feature of Lemma 3.5 is that the exponent of the term $[q_J(\x)]_+$ in the above inequality is related to $d$, the degree of singularity of the system.
As one can observe later, it is this special and computable quantity that makes our analysis possible. Indeed, when Lemma \ref{lem:eb} is applied to system
\begin{equation}
        \label{eq:sysorigin}
        f(\x)-f^*\le0, \quad g(\x)\le0,
\end{equation}
        the main task will be computing its degree of singularity.
We first consider a case that the system is minimal, i.e., deleting either inequality yields the system nonsingular.

\begin{lem}
\label{lem:EBnonnegreg}
Assume that $\lambda_1 \ge 0$ and $\min_{\x\in\R^n} f(\x)<f^*$. Then there exists a constant $\tau>0$ such that
\begin{equation}
\label{EB:regular}
\dt(\x,S_0)\le \tau \big(f(\x)-f^*+\delta_{B}(\x)\big)^{1/2}, \quad \forall \x\in \R^n.
\end{equation}
\end{lem}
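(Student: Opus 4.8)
The plan is to invoke the convex quadratic error bound of Lemma~\ref{lem:eb} for the two–inequality system \eqref{eq:sysorigin}, i.e., $f(\x)-f^*\le 0$ and $g(\x)\le 0$. Since $\lambda_1\ge 0$ the function $f$ is convex quadratic, and so is $g$, so this is a convex quadratic system with the nonempty solution set $S_0$ (indeed $f(\x)\ge f^*$ on the unit ball, so the constraint $f(\x)\le f^*$ forces $f(\x)=f^*$ there). Thus Lemma~\ref{lem:eb} will apply once I classify the two constraints as singular or nonsingular and, most importantly, compute the degree of singularity $d$ of the system.

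First I would check that both inequalities are singular. The inequality $f(\x)-f^*\le 0$ is singular because $f(\x^*)-f^*=0$ for every $\x^*\in S_0$. For $g(\x)\le 0$, if some $\x^*\in S_0$ had $\|\x^*\|<1$, then $\x^*$ would be an interior, hence unconstrained, local minimizer of the convex function $f$, therefore a global one, forcing $\min_{\x}f(\x)=f(\x^*)=f^*$ and contradicting the hypothesis $\min_{\x}f(\x)<f^*$; hence $\|\x^*\|=1$ for all $\x^*\in S_0$, so $g(\x)\le 0$ is singular as well. Thus in the notation of Lemma~\ref{lem:eb} we have $J=\{1,2\}$ and $K=\emptyset$.

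Next I would determine $d$ using Definitions~\ref{def:critical}, \ref{def:irregular} and \ref{def:degreeofsingularity}. Deleting the nonlinear inequality $f(\x)-f^*\le 0$ leaves only $g(\x)\le 0$, which is nonsingular since the unit ball has nonempty interior; deleting $g(\x)\le 0$ leaves only $f(\x)-f^*\le 0$, which is nonsingular precisely because $\min_{\x}f(\x)<f^*$ supplies a strict solution. Hence the system satisfies condition~2 of Definition~\ref{def:critical}, i.e., it is itself critical; since both inequalities lie in this critical subsystem (the whole system), neither is irregular, the number of irregular inequalities is $0$, and therefore $d=1$. Lemma~\ref{lem:eb} then produces a constant $\tau>0$ with
\[
\dt(\x,S_0)\le \tau\Big([f(\x)-f^*]_+ + [g(\x)]_+ + [f(\x)-f^*]_+ ^{1/2} + [g(\x)]_+ ^{1/2}\Big),\qquad \forall\,\x\in\R^n.
\]

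Finally I would pass to the stated form by a case split. If $\|\x\|>1$ then $\delta_B(\x)=+\infty$ and the claimed inequality is vacuous. If $\|\x\|\le 1$ then $[g(\x)]_+=0$ and $f(\x)\ge f^*$, so the bound above reduces to $\dt(\x,S_0)\le\tau\big((f(\x)-f^*)+(f(\x)-f^*)^{1/2}\big)$; since $f$ is continuous and $B$ is compact, $f(\x)-f^*\le M$ on $B$ for some $M\ge 0$, whence $f(\x)-f^*\le M^{1/2}(f(\x)-f^*)^{1/2}$ there, and with $\tau':=\tau(M^{1/2}+1)$ we obtain $\dt(\x,S_0)\le \tau'(f(\x)-f^*+\delta_B(\x))^{1/2}$ for all $\x\in\R^n$, as desired. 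The only genuinely delicate step is the verification of criticality and the resulting value $d=1$; everything after that is routine bookkeeping.
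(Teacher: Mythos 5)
Your proof is correct and follows essentially the same route as the paper: identify both inequalities of system \eqref{eq:sysorigin} as singular, show the system is critical via condition 2 of Definition~\ref{def:critical} (using $\min f < f^*$), deduce $d=1$, apply Lemma~\ref{lem:eb}, and absorb the linear terms using boundedness of $f-f^*$ on $B$. Your justification that $g(\x)\le 0$ is singular (an interior minimizer would force $\min f = f^*$) is in fact a cleaner argument than the paper's appeal to Lemma~\ref{lemma:P1}, which is stated only for $\lambda_1<0$, but the overall structure and conclusion are the same.
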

\begin{proof}
From the definition of $f^*$, we know that the solution set of  system \eqref{eq:sysorigin} is $S_0$. From Lemma \ref{lemma:P1}, it further holds that system \eqref{eq:sysorigin} is singular.
Moreover, we see that when either inequality in \eqref{eq:sysorigin} is deleted, the remaining inequality is nonsingular.
Hence, from Definition \ref{def:critical}, we know that system \eqref{eq:sysorigin} is critical.
Therefore, there is no irregular inequality in \eqref{eq:sysorigin} and the degree of singularity $d$ of system \eqref{eq:sysorigin} equals to $1$.
Hence, by using Lemma \ref{lem:eb}, we have that for all $\x\in \R^n$,
\begin{equation}
\label{eq:distS}
\dt(\x,S_0)\le \tau_1\left([f(\x)-f^*]_++[g(\x)]_+
+[f(\x)-f^*]_+^{1/2}+[g(\x)]_+^{1/2}\right).
\end{equation}
By Weierstrass theorem, we know that $[f(\x) - f^*]_+$ is upper bounded over the unit ball, i.e., there exists a constant $M>0$ such that
\[
M = \max \left\{
[f(\x) - f^*]_+ \mid \norm{\x} \le 1
\right\}.
\]
If $M\le 1$, inequality \eqref{eq:distS} implies that \eqref{EB:regular} holds with $\tau = 2\tau_1$. If $M > 1$, we have from \eqref{eq:distS} that
\[
\dt(\x,S_0)\le 2\tau_1 (f(\x)-f^*+\delta_{B}(\x))^{1/2}, \quad \forall\, \x \mbox{ with } [f(\x) - f^*]_+\le 1,
\]
and
\begin{align*}
\dt(\x,S_0)\le{}& \tau_1 (M + (f(\x)-f^*+\delta_{B}(\x))^{1/2}),\\
\le{}& \tau_1(M+1)(f(\x)-f^*+\delta_{B}(\x))^{1/2},
 \quad \forall\, \x \mbox{ with } [f(\x) - f^*]_+> 1.
\end{align*}
Combining all the above discussions, we see that \eqref{EB:regular} holds with $\tau = \max\{2, M+1\}\tau_1$.
\end{proof}

Now consider the case that $\min_{\x\in\R^n} f(\x) = f^*$. In this case, it is easy to see that $\{f(\x)-f^*\le0\}$ is a  singular system. Hence, the degree of singularity
of system \eqref{eq:sysorigin} and the corresponding error bound modulus depend on
the singularity of the second inequality.
Indeed, since $\min_{\x\in\R^n} f(\x) = f^* > -\infty$, we know that $\b\in \ra(\A)$ and for all $\x \in\R^n$,
\begin{equation*}
\label{eq:fval}
f(\x) - f^* = (\x - \tilde \x)^T \A (\x - \tilde \x),
\end{equation*}
where $\tilde \x=\A^\dagger \b$ and $\norm{\tilde \x} \le 1$.
Therefore,  the optimal solution set of problem (P) can be written as \begin{equation}
\label{eq:s0convex}
S_0= \{\x\in\R^n \mid  \x =  \tilde \x + \d, \, \|\x\|\le1, \, \d \in \Null(\A) \}.
          \end{equation}
With these discussions in hand, we are ready to derive a H\"olderian error bound  in the following lemma.
\begin{lem}
\label{lem:EBnonneg}
Assume that $\lambda_1\ge0$ and $\min_{\x\in\R^n} f(\x) = f^*$. Then, it holds that
\begin{enumerate}
        \item if $\lambda_1 >0$, then
        \[
        \dt(\x,S_0)\le \sqrt{\frac{1}{\lambda_1}}\,[f(\x) - f^* + \delta_B(\x)]^{1/2}, \quad \forall\, \x\in\R^n;
        \]
        \item if $\lambda_1 = 0$ and  $\norm{\tilde \x}= 1$, then $S_0 = \{ \tilde \x\}$ and there exists a constant $\tau >0$ such that
        \begin{equation*}
        \label{EB:1o4}
                \dt(\x,S_0)\le \tau[f(\x) - f^* + \delta_B(\x)]^{1/4}, \quad \forall\, \x\in\R^n;
        \end{equation*}
                \item if $\lambda_1 = 0$ and  $\norm{\tilde \x} <      1$, then there exists a constant $\tau >0$ such that
        \begin{equation*}
        \label{EB:1o2}
        \dt(\x,S_0)\le \tau[f(\x) - f^* + \delta_B(\x)]^{1/2}, \quad \forall\, \x\in\R^n.
        \end{equation*}
\end{enumerate}
\end{lem}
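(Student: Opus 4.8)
The plan is to treat the three items separately. Item 1 ($\lambda_1>0$) does not need Lemma \ref{lem:eb} at all: here $\A\succ{\bf 0}$, so $\Null(\A)=\{{\bf 0}\}$ and, by \eqref{eq:s0convex} together with $\norm{\tilde\x}\le 1$, $S_0=\{\tilde\x\}$, whence for every $\x\in\R^n$
\[
f(\x)-f^* = (\x-\tilde\x)^T\A(\x-\tilde\x)\ge\lambda_1\norm{\x-\tilde\x}^2 = \lambda_1\,\dt(\x,S_0)^2 ;
\]
taking square roots gives $\dt(\x,S_0)\le\sqrt{1/\lambda_1}\,(f(\x)-f^*)^{1/2}$, and since the claimed right-hand side equals $+\infty$ when $\x\notin B$, the explicit bound follows.

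For items 2 and 3 I would apply Lemma \ref{lem:eb} to the system \eqref{eq:sysorigin}, whose solution set is $S_0$ and in which $f(\x)-f^*\le 0$ is singular because $\min_\x f(\x)=f^*$; the whole task reduces to computing the degree of singularity $d$. In item 2 one should first pin down $S_0$: since $\tilde\x=\A^\dagger\b\in\ra(\A)=\Null(\A)^\perp$, any $\d\in\Null(\A)$ satisfies $\norm{\tilde\x+\d}^2=\norm{\tilde\x}^2+\norm{\d}^2=1+\norm{\d}^2$, so \eqref{eq:s0convex} forces $\d={\bf 0}$ and $S_0=\{\tilde\x\}$.

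The degree of singularity is then read off from Definitions \ref{def:critical}--\ref{def:degreeofsingularity}. In item 3 ($\norm{\tilde\x}<1$), $g(\tilde\x)<0$ with $\tilde\x\in S_0$ shows $g(\x)\le 0$ is nonsingular; the only nonlinear singular inequality is $f(\x)-f^*\le 0$, and the one-element subsystem $\{f(\x)-f^*\le 0\}$ is singular with at most one nonlinear inequality, hence critical, so that inequality is regular and $d=1$. In item 2, $g(\tilde\x)=0$ makes \emph{both} inequalities singular; the full system \eqref{eq:sysorigin} is not critical (both are nonlinear, and deleting $g$ leaves the still-singular system $\{f(\x)-f^*\le 0\}$); $f(\x)-f^*\le 0$ is still regular through the critical subsystem $\{f(\x)-f^*\le 0\}$; but $g(\x)\le 0$ is irregular, since the only subsystems containing it are $\{g(\x)\le 0\}$, which is not singular, and \eqref{eq:sysorigin} itself, which is not critical. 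Thus there is one irregular inequality and $d=2$.

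Feeding $d$ into Lemma \ref{lem:eb} yields, for all $\x\in\R^n$,
\[
\dt(\x,S_0)\le\tau_1\big([f(\x)-f^*]_+ + [g(\x)]_+ + [f(\x)-f^*]_+^{1/2^d} + [g(\x)]_+^{1/2^d}\big) ,
\]
with $d=2$ in item 2 and $d=1$ in item 3. Restricting to $\x\in B$ kills the $[g(\x)]_+$ terms, and since $M:=\max\{[f(\x)-f^*]_+:\norm{\x}\le 1\}<\infty$ by the Weierstrass theorem, the linear term is absorbed into $[f(\x)-f^*]_+^{1/2^d}$ up to the factor $M^{1-1/2^d}$; for $\x\notin B$ the bound is trivial since $\delta_B(\x)=+\infty$. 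This is precisely the rescaling already used in the proof of Lemma \ref{lem:EBnonnegreg}, and it produces the exponents $1/4$ and $1/2$. I expect the only genuine obstacle to be the bookkeeping with Definitions \ref{def:critical}--\ref{def:degreeofsingularity} in item 2, specifically the asymmetry that $g(\x)\le 0$ is irregular while $f(\x)-f^*\le 0$ is not --- this comes from $\{g(\x)\le 0\}$ alone failing to be singular, so $g$ cannot hide in a one-element critical subsystem, whereas $\{f(\x)-f^*\le 0\}$ alone is singular and trivially critical; the spectral inequality in item 1, the orthogonality $\tilde\x\perp\Null(\A)$, and the absorption step are all routine.
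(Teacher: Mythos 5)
Your proof is correct and follows essentially the same route as the paper's: item 1 via the spectral lower bound $f(\x)-f^*=(\x-\tilde\x)^T\A(\x-\tilde\x)\ge\lambda_1\|\x-\tilde\x\|^2$, and items 2--3 via Lemma \ref{lem:eb} applied to \eqref{eq:sysorigin} after computing the degree of singularity ($d=2$ and $d=1$ respectively), followed by the same absorption-of-the-linear-term step used in Lemma \ref{lem:EBnonnegreg}. Your bookkeeping for $d$ in item 2 is slightly more explicit than the paper's (you check all three subsystems of \eqref{eq:sysorigin}, whereas the paper simply asserts that $\{f(\x)-f^*\le 0\}$ is the only critical subsystem), and your orthogonality argument pinning down $S_0=\{\tilde\x\}$ spells out a point the paper only implies; neither difference changes the substance.
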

\begin{proof}
Case 1 follows directly from the fact that $f(\x) - f^* = (\x - \tilde \x)^T \A (\x - \tilde \x)\ge\lambda_1\|\x-\tilde\x\|^2$ and $S_0=\{\tilde\x\}$.

For case 2, since $\lambda_1 =0$ and $\{\x\in \R^n \mid \norm{\x} < 1\}\cap S_0 = \emptyset$, from \eqref{eq:s0convex}, we know that $S_0 = \{\tilde \x\}$, i.e., $S_0$ is a singleton.
Thus, the second inequality $g(\x) \le 0$ in system \eqref{eq:sysorigin} is singular. Meanwhile, the assumption that $\min_{\x\in\R^n} f(\x) = f^*$ implies that the only critical subsystem of \eqref{eq:sysorigin} is the first inequality $f(\x) - f^*\le 0$. Therefore, $g(\x) \le 0$ is irregular and the degree of singularity of \eqref{eq:sysorigin} equals to $2$.
Then, Lemma \ref{lem:eb} asserts that there exists a constant $\tau_1>0$ such that
\[\dt(\x,S_0)\le \tau_1 \left([f(\x)-f^*]_++[g(\x)]_++[f(\x)-f^*]_+^{1/4}+[g(\x)]_+^{1/4}\right)\]
for all $\x\in\R^n$.
Following the same arguments in the proof of Lemma \ref{lem:EBnonnegreg}, we know that there exists a constant $\tau >0$ such that
\[
\dt(\x,S_0)\le \tau\big(f(\x) - f^* + \delta_B(\x)\big)^{1/4}, \quad \forall\, \x\in\R^n.
\]

In case 3, we have $\{\x \in \R^n \mid \norm{\x} < 1\}\cap S_0 \neq \emptyset$ and  thus the second inequality $\norm{\x}^2 - 1\le 0$ is nonsingular. Hence the degree of singularity of system \eqref{eq:sysorigin} equals to 1 as there are no irregular inequalities.
Then, Lemma \ref{lem:eb} asserts that there exists a constant $\tau_2>0$ such that
\[\dt(\x,S_0)\le \tau_2 \left([f(\x)-f^*]_++[g(\x)]_++[f(\x)-f^*]_+^{1/2}+[g(\x)]_+^{1/2}\right)\]
for all $\x\in\R^n$.
Similar as the proof of Lemma \ref{lem:EBnonnegreg}, we have
\[
\dt(\x,S_0)\le \tau\big(f(\x) - f^* + \delta_B(\x)\big)^{1/2}, \quad \forall\, \x\in\R^n.
\]
This completes the proof for the lemma.
\end{proof}
\subsection{Case with $\lambda_1<0$}
In this section, we turn our interests to the nonconvex TRS. As is stated in the preliminary section, the nonconvex TRS includes the easy and hard cases. We will derive error bounds for them separately.

Before diving into the proofs, we shall discuss the main ideas here.
Since $\lambda_1 <0$, the first inequality in the quadratic inequality system \eqref{eq:sysorigin} is {nonconvex}.
        Fortunately, the following quadratic inequality system
        \begin{equation}
        \label{eq:sysnew}
        \tilde f(\x)-f^*\le0, \quad g(\x)\le0,
        \end{equation}
        derived from the reformulation $\rm(P_1)$ (see Lemma \ref{lemma:P1}), is always a convex one.
        Hence, we can apply Lemma \ref{lem:eb} to system \eqref{eq:sysnew} and then use the relations between the solution sets of
        systems \eqref{eq:sysorigin} and \eqref{eq:sysnew} to establish meaningful error bounds for nonconvex quadratic inequality system \eqref{eq:sysorigin}.

We first study the easy case and hard case 1. In these two cases, the solution for problem (P) is unique and has unit norm. %

\begin{lem}
\label{lem:ezh1}
In the easy case or hard case 1, there exists some constant $\tau>0$ such that
\begin{equation*}
\dt(\x,S_0)\le \tau\big(f(\x)-f^*+\delta_B(\x)\big)^{1/2},\quad \forall \x\in\R^n.
\end{equation*}
\end{lem}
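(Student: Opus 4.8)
The plan is to imitate the strategy of Lemma~\ref{lem:EBnonnegreg}, but to apply the convex error bound of Lemma~\ref{lem:eb} to the \emph{convex} surrogate system \eqref{eq:sysnew} coming from $\rm(P_1)$ instead of to the nonconvex system \eqref{eq:sysorigin}, and then to push the resulting estimate back to $f$ and $S_0$ via \eqref{eq:fandtf}. First I would pin down the relevant solution sets. In the easy case and in hard case~1 the TRS has a unique optimal solution $\x^*$ with $\norm{\x^*}=1$, so $S_0=\{\x^*\}$; moreover, for any $\x^*\in S_0\subseteq S_1$ Lemma~\ref{lemma:P1} gives $\tilde f(\x^*)=f(\x^*)=f^*$, so $\min_{\norm{\x}\le 1}\tilde f(\x)=f^*$, and hence the solution set of \eqref{eq:sysnew} equals $S_1=\{\x\in\R^n\mid \tilde f(\x)=f^*,\ \norm{\x}\le 1\}$ (using that $\tilde f(\x)\ge f^*$ on $B$).

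Next I would show $S_1=S_0$. Since $\lambda^*>-\lambda_1$ in both cases, the KKT relation $(\A+\lambda^*\I)\x^*=\b$ yields
\[
\nabla\tilde f(\x^*)=2(\A-\lambda_1\I)\x^*-2\b=-2(\lambda^*+\lambda_1)\x^*\neq{\bf 0},
\]
so $\x^*$ is not an unconstrained minimizer of the convex function $\tilde f$, whence $\inf_{\x\in\R^n}\tilde f(\x)<f^*$. Consequently no point of $S_1$ can lie in the open unit ball, since an interior minimizer of $\tilde f$ over $B$ would be a global minimizer of $\tilde f$; thus $S_1\subseteq\{\norm{\x}=1\}$, and combined with $S_0=S_1\cap\{\norm{\x}=1\}$ (Lemma~\ref{lemma:P1}) this gives $S_1=S_0$.

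Then I would compute the degree of singularity of \eqref{eq:sysnew}. Every solution $\x$ satisfies $\tilde f(\x)=f^*$ and $\norm{\x}=1$, so both inequalities are singular and \eqref{eq:sysnew} is a singular system; it is moreover critical by condition~2 of Definition~\ref{def:critical}, because deleting $g(\x)\le 0$ leaves $\tilde f(\x)-f^*\le 0$, which is nonsingular ($\inf_{\R^n}\tilde f<f^*$), and deleting $\tilde f(\x)-f^*\le 0$ leaves $g(\x)\le 0$, which is nonsingular ($g({\bf 0})=-1<0$). Hence there is no irregular inequality and $d=1$. Applying Lemma~\ref{lem:eb} to \eqref{eq:sysnew} (with $J=\{1,2\}$, $d=1$) yields $\tau_1>0$ with
\[
\dt(\x,S_0)=\dt(\x,S_1)\le\tau_1\big([\tilde f(\x)-f^*]_++[g(\x)]_++[\tilde f(\x)-f^*]_+^{1/2}+[g(\x)]_+^{1/2}\big),\qquad\forall\,\x\in\R^n.
\]
For $\x\in B$ we have $[g(\x)]_+=0$ and, by \eqref{eq:fandtf} together with $\min_B\tilde f=f^*$, also $0\le[\tilde f(\x)-f^*]_+=\tilde f(\x)-f^*\le f(\x)-f^*$; since $\tilde f-f^*$ is bounded on the compact set $B$ by the Weierstrass theorem, the rescaling argument from the proof of Lemma~\ref{lem:EBnonnegreg} absorbs the linear terms into the square-root term and gives $\dt(\x,S_0)\le\tau\,(f(\x)-f^*+\delta_B(\x))^{1/2}$ on $B$. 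For $\x\notin B$ the right-hand side is $+\infty$, so the bound is trivial, and the lemma follows.

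The routine part is the error-bound bookkeeping; the only genuinely case-specific ingredient is the strict inequality $\inf_{\R^n}\tilde f<f^*$ (equivalently $S_1=S_0$, equivalently that $g(\x)\le 0$ stays non-irregular in \eqref{eq:sysnew}), and this is exactly where the hypothesis ``easy case or hard case~1'', i.e.\ $\lambda^*>-\lambda_1$, is used. In hard case~2 this step fails, the degree of singularity of \eqref{eq:sysnew} jumps to $2$, and one can only expect the weaker exponent $1/4$; so I anticipate no serious obstacle beyond carefully invoking the $\lambda^*>-\lambda_1$ structure here.
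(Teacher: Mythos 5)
Your argument is correct and follows essentially the same route as the paper: apply Lemma~\ref{lem:eb} to the convex surrogate system~\eqref{eq:sysnew}, verify its degree of singularity is $1$ because $\inf_{\R^n}\tilde f<f^*$, and then transfer the resulting bound from $(\tilde f, S_1)$ to $(f,S_0)$ via $\tilde f\le f$ on $B$ and $S_1=S_0$. The only minor variation is that you obtain $\inf_{\R^n}\tilde f<f^*$ in one stroke from $\nabla\tilde f(\x^*)=-2(\lambda^*+\lambda_1)\x^*\neq{\bf 0}$, whereas the paper treats the easy case ($\inf\tilde f=-\infty$) and hard case~1 ($\|(\A-\lambda_1\I)^{\dagger}\b\|>1$) separately; both derivations are valid.
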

\begin{proof}
In both the easy case and hard case 1, we see that $\min_{\x\in\R^n} \tilde f(\x) < f^*$. %
Indeed, for the easy case, since $\b\not\perp \Null(\A - \lambda_1 \I)$, we have  $\b\notin\ra(\A-\lambda_1\I)$. Then, it holds that $\min \tilde f(\x)=-\infty<f^*$.
For hard case 1, the optimal solution for  $\min_{\x\in\R^n} \tilde f(\x)$
is achieved by $(\A-\lambda_1\I)^\dagger \b$, whose norm is larger than 1. This gives  $\min_{\x\in\R^n} \tilde f(\x)<f^*$.
Similar to the case studied in Lemma \ref{lem:EBnonnegreg}, the degree of singularity of system \eqref{eq:sysnew} equals to $1$. Hence, there exists some constant $\tau>0$ such that for all
$\x\in\R^n$,
\begin{equation*}
\dt(\x,S_1)\le \tau\big(\tilde f(\x)-f^*+\delta_B(\x)\big)^{1/2}.
\end{equation*}
From \eqref{eq:fandtf},
we see that
\[
\dt(\x, S_1) \le \tau\big( f(\x)-f^*+\delta_B(\x)\big)^{1/2}, \quad \forall \, \x\in\R^n.
\]
Note that in both the easy case and hard case 1, since $\min_{\x\in\R^n} \tilde f(\x) < f^*$,
we have that $\norm{\x} = 1$ for all $\x\in S_1$. This further implies that
$S_1 = S_0$
(in fact, $S_1=S_0=\{(\A-\lambda^* \I)^\dagger \b\}$ for a unique $\lambda^*>-\lambda_1$ such that $\|(\A-\lambda^* \I)^\dagger \b\|=1$) and thus completes the proof.
\end{proof}

In hard case 2, $\b$ is orthogonal to the eigenspace of matrix $\A$ corresponding to the smallest eigenvalue, i.e., $\b\perp \Null(\A-\lambda_1\I)$ and $f^* = \min_{\x\in\R^n} \tilde f(\x)$.
Denote $\bar \x=(\A-\lambda_1\I)^\dagger \b$. Then, we have
\begin{equation}
\label{eq:s0s1}
\begin{array}{lll}
S_0&=&\{\bar \x+ \v \in \R^n \mid  \|\bar \x+ \v\|=1,\v\in \Null(\A-\lambda_1\I)\}~~~~ \mbox{
and }\\
S_1&=&\{\bar \x+ \v \in \R^n \mid \|\bar \x+ \v\|\le1,\v\in \Null(\A-\lambda_1\I)\}.
\end{array}
\end{equation}
Let us first consider the  hard case 2 (i).
\begin{lem}
        \label{lem:h2i}
         In the hard case 2 (i),
         there exists $\tau >0$ such that
        \[
        \dt(x,S_0) \le \tau \big(f(\x) - f^* + \delta_{B}(x)\big)^{1/4}, \quad \forall\,
        \x\in\R^n.
        \]
\end{lem}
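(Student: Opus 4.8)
The plan is to reduce, as in case~2 of Lemma~\ref{lem:EBnonneg}, to the convex quadratic system \eqref{eq:sysnew} and to compute its degree of singularity. The first observation I would make is that in hard case~2~(i) the set $S_0$ is actually a singleton. Since $\bar\x=(\A-\lambda_1\I)^\dagger\b$ lies in $\ra(\A-\lambda_1\I)=\Null(\A-\lambda_1\I)^\perp$, for any $\v\in\Null(\A-\lambda_1\I)$ we have $\norm{\bar\x+\v}^2=\norm{\bar\x}^2+\norm{\v}^2=1+\norm{\v}^2$, using $\norm{\bar\x}=1$ (the defining feature of hard case~2~(i)). Hence in \eqref{eq:s0s1} the only admissible choice is $\v=\mathbf 0$, so that $S_0=S_1=\{\bar\x\}$.

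Next I would analyze the convex quadratic system \eqref{eq:sysnew}, whose solution set is precisely $S_1=\{\bar\x\}$. Because $f^*=\min_{\x\in\R^n}\tilde f(\x)$ in hard case~2, the inequality $\tilde f(\x)-f^*\le 0$ is singular, and because $\norm{\bar\x}=1$ the inequality $g(\x)\le 0$ is singular as well. Arguing with Definitions~\ref{dfn:singular}--\ref{def:degreeofsingularity}: the subsystem $\{\tilde f(\x)-f^*\le 0\}$ is critical (it has a single nonlinear inequality), whereas $\{g(\x)\le 0\}$ alone is \emph{not} singular (its solution set is $B$ and $g$ is not identically zero there), and the full two-inequality system is not critical either, since it has two nonlinear inequalities and deleting $g(\x)\le 0$ leaves the still-singular inequality $\tilde f(\x)-f^*\le 0$. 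Consequently $g(\x)\le 0$ is irregular and the degree of singularity of \eqref{eq:sysnew} equals $2$. Applying Lemma~\ref{lem:eb} then yields a constant $\tau_1>0$ with
\[
\dt(\x,S_1)\le \tau_1\big([\tilde f(\x)-f^*]_+ + [g(\x)]_+ + [\tilde f(\x)-f^*]_+^{1/4} + [g(\x)]_+^{1/4}\big),\qquad \forall\,\x\in\R^n.
\]

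Finally I would transfer this bound from $\tilde f$ and $S_1$ to $f$ and $S_0$. For $\x\notin B$ the claimed inequality is trivial since its right-hand side is $+\infty$. For $\x\in B$ we have $[g(\x)]_+=0$ and, by \eqref{eq:fandtf}, $\tilde f(\x)\le f(\x)$, hence $[\tilde f(\x)-f^*]_+\le[f(\x)-f^*]_+$; combining this with the boundedness of $[\tilde f(\x)-f^*]_+$ over the unit ball (Weierstrass) and the elementary inequality $t\le t^{1/4}$ for $t\in[0,1]$, the same bookkeeping as in the proof of Lemma~\ref{lem:EBnonnegreg} produces a constant $\tau>0$ with $\dt(\x,S_1)\le\tau\big(f(\x)-f^*+\delta_B(\x)\big)^{1/4}$ for all $\x\in\R^n$. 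Since $S_0=S_1$, this is exactly the asserted error bound. The one delicate point, and the reason the exponent is $1/4$ rather than $1/2$, is the degree-of-singularity computation in the second step: one must check carefully that $g(\x)\le 0$ is contained in \emph{no} critical subsystem of \eqref{eq:sysnew}; everything else is routine once $S_0=S_1=\{\bar\x\}$ has been established.
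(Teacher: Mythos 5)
Your proof is correct and takes essentially the same route as the paper's: reduce to the convex relaxation $({\rm P_1})$, observe that $S_0=S_1=\{\bar\x\}$ so that the system \eqref{eq:sysnew} has degree of singularity $2$, and transfer back to $f$ via \eqref{eq:fandtf}. The only difference is one of presentation — the paper simply invokes Lemma~\ref{lem:EBnonneg} case~2 applied to $({\rm P_1})$, whereas you unroll that citation and redo the degree-of-singularity computation explicitly.
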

\begin{proof}
        From the assumption, we know that $S_0 = S_1 = \{\bar \x\}$. By applying Lemma \ref{lem:EBnonneg} case 2 to problem $({\rm P_1})$, we obtain that\[
        \dt(\x,S_0) = \dt(\x,S_1)\le \tau\big(\tilde f(\x)-f^*+\delta_B(\x)\big)^{1/4}, \quad \forall\, \x\in\R^n.
        \]
        The conclusion then follows directly from \eqref{eq:fandtf}.
\end{proof}

Next we focus on the  hard case 2 (ii), in which $S_1$ is not a singleton since $\norm{\bar \x} < 1$.
To establish the desired error bound inequality in this case, we need to characterize the connection between $\dt(\x,S_0)$ and $\dt(\x,S_1)$. For this purpose, we establish the following technical lemma.

\begin{lem}
\label{lem:proj0}
Given $\x\in\R^n$, let  $\x_1:=\Pi_{S_1}(\x)$ be the projection of $\x$ onto the convex set $S_1$.
In the hard case 2 (ii), it holds that
\begin{enumerate}
\item if $\x_1 = \bar \x$, then $\Pi_{S_0}(\x) = S_0$ and
$\dt(\x, S_0) = \sqrt{\norm{\x - \bar \x}^2 + 1 - \norm{\bar \x}^2}\,$;
\item else if  $\x_1 \neq \bar \x$, then
$\Pi_{S_0}(\x) = \bar\x+t_0\v_1$ with $\v_1=\frac{\x_1-\bar\x}{\|\x_1-\bar\x\|} \in \Null(\A - \lambda_1 \I)$  and  $t_0\ge\|\x_1-\bar\x\|$   such that $\|\bar\x+t_0\v_1\|=1.$
\end{enumerate}
\end{lem}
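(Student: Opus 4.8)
The plan is to exploit the explicit description of $S_0$ and $S_1$ in \eqref{eq:s0s1} together with the orthogonal decomposition $\R^n = \Null(\A-\lambda_1\I) \oplus \Null(\A-\lambda_1\I)^\perp$. Writing $\x = \bar\x + \u + \w$ where $\u$ is the component of $\x - \bar\x$ in $\Null(\A-\lambda_1\I)$ and $\w \perp \Null(\A-\lambda_1\I)$ (note $\bar\x \in \Null(\A-\lambda_1\I)^\perp$ since $\bar\x = (\A-\lambda_1\I)^\dagger\b$ and $\b \perp \Null(\A-\lambda_1\I)$), every element of $S_1$ has the form $\bar\x + \v$ with $\v \in \Null(\A-\lambda_1\I)$, $\|\bar\x+\v\|\le 1$, i.e. $\|\v\|^2 \le 1 - \|\bar\x\|^2 =: r^2 > 0$. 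Hence $\|\x - (\bar\x+\v)\|^2 = \|\w\|^2 + \|\u-\v\|^2$, and projecting $\x$ onto $S_1$ amounts to projecting $\u$ onto the ball $\{\v \in \Null(\A-\lambda_1\I) : \|\v\| \le r\}$. So $\x_1 = \bar\x + \Pi_{\{\|\v\|\le r\}}(\u)$; in particular $\x_1 = \bar\x$ iff $\u = \bf 0$, and when $\x_1 \neq \bar\x$ we have $\x_1 - \bar\x = \min\{1, r/\|\u\|\}\,\u$, so $\v_1 := (\x_1-\bar\x)/\|\x_1-\bar\x\|$ is the unit vector along $\u$ and lies in $\Null(\A-\lambda_1\I)$, with $\|\x_1 - \bar\x\| = \min\{\|\u\|, r\} \le \|\u\|$.

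**Case 1: $\x_1 = \bar\x$, i.e. $\u = \bf 0$.** Then $\x = \bar\x + \w$ with $\w \perp \Null(\A-\lambda_1\I)$. For any $\bar\x + \v \in S_0$ (so $\|\v\|^2 = r^2$, $\v \in \Null(\A-\lambda_1\I)$) we get $\|\x - (\bar\x+\v)\|^2 = \|\w\|^2 + \|\v\|^2 = \|\w\|^2 + r^2 = \|\x - \bar\x\|^2 + 1 - \|\bar\x\|^2$, which is independent of the choice of $\v$. Therefore every point of $S_0$ achieves the minimum distance, giving $\Pi_{S_0}(\x) = S_0$ and $\dt(\x,S_0) = \sqrt{\|\x-\bar\x\|^2 + 1 - \|\bar\x\|^2}$, as claimed. (Here one uses that $S_0$ is nonempty, which holds in hard case 2 (ii) since $r > 0$.)

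**Case 2: $\x_1 \neq \bar\x$, i.e. $\u \neq \bf 0$.** Now minimize $\|\x - (\bar\x+\v)\|^2 = \|\w\|^2 + \|\u - \v\|^2$ over $\v \in \Null(\A-\lambda_1\I)$ with $\|\v\| = r$. Since $\|\w\|^2$ is a constant, this reduces to maximizing $\inprod{\u}{\v}$ over $\|\v\| = r$ inside the subspace $\Null(\A-\lambda_1\I)$ (because $\|\u-\v\|^2 = \|\u\|^2 + r^2 - 2\inprod{\u}{\v}$), whose unique maximizer is $\v = r\,\u/\|\u\| = r\v_1$. Thus $\Pi_{S_0}(\x) = \bar\x + r\v_1 = \bar\x + t_0\v_1$ with $t_0 = r = \sqrt{1-\|\bar\x\|^2}$; then $\|\bar\x + t_0\v_1\|^2 = \|\bar\x\|^2 + t_0^2 = 1$ automatically, and $t_0 = r \ge \min\{\|\u\|, r\} = \|\x_1 - \bar\x\|$. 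This gives exactly the stated conclusion.

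**Main obstacle.** There is no deep difficulty here; the statement is essentially a bookkeeping exercise once the right coordinates are chosen. The one point that needs care is verifying that $\bar\x \perp \Null(\A-\lambda_1\I)$ so that the Pythagorean splitting $\|\x - (\bar\x+\v)\|^2 = \|\w\|^2 + \|\u-\v\|^2$ is valid — this is where the hard-case hypothesis $\b \perp \Null(\A-\lambda_1\I)$ enters, via $\bar\x = (\A-\lambda_1\I)^\dagger\b \in \ra(\A-\lambda_1\I) = \Null(\A-\lambda_1\I)^\perp$. After that, both cases are immediate from the elementary fact that minimizing a quadratic over a Euclidean sphere (or ball) in a subspace has a one-line closed-form solution.
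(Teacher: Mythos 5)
Your proof is correct, and the underlying geometry is the same as the paper's: both rest on the orthogonal decomposition $\R^n = \Null(\A-\lambda_1\I) \oplus \ra(\A-\lambda_1\I)$ together with $\bar\x \in \ra(\A-\lambda_1\I)$, and both reduce the projection onto $S_0$ to an explicit minimization over a sphere in $\Null(\A-\lambda_1\I)$. The difference is one of presentation. The paper derives the needed orthogonality facts ($\x - \bar\x \perp \Null(\A-\lambda_1\I)$ in case 1, $\x - \x_1 \perp \Null(\A-\lambda_1\I)$ in case 2) from the variational inequality characterizing $\x_1 = \Pi_{S_1}(\x)$, splits case 2 into the subcases $\|\x_1\|=1$ and $\|\x_1\|<1$, and introduces a basis $\{\v_1,\ldots,\v_k\}$ of $\Null(\A-\lambda_1\I)$ to write the projection problem \eqref{prob:projxs0} in coordinates before solving. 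You instead compute the decomposition $\x = \bar\x + \u + \w$ directly, observe that $\Pi_{S_1}(\x) = \bar\x + \Pi_{\{\|\v\|\le r\}}(\u)$, and read off both cases from the closed-form projection onto a ball/sphere in a subspace; this unifies the paper's two subcases of case 2 and makes the argument shorter. Both are valid; yours trades the paper's variational-inequality machinery for a direct coordinate split, which is easier to verify step by step at the cost of being slightly less coordinate-free.
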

\begin{proof}
In the first case, since $\x_1 = \Pi_{S_1}(\x) = \bar \x $, we know that
\[
\inprod{\x - \bar\x}{\z - \bar\x} \le 0, \quad \forall\, \z \in S_1,
\]
which, together with the structure of $S_1$, implies that
$\x - \bar \x \in \ra(\A - \lambda_1 \I).$
Thus, for any $\y \in S_0$, we have that
\[
\norm{\x - \y}^2 = \norm{\x - \bar \x + \bar \x - \y}^2
= \norm{\x - \bar \x}^2 + \norm{\bar\x - \y}^2,
\]
where the second equality follows from the facts that $\y - \bar\x \in \Null(\A - \lambda_1 \I)$ and {thus} $\inprod{\x - \bar \x}{\bar\x - \y} = 0$.
This, together with $1 = \norm{\y} = \norm{\bar \x - \y}^2 + \norm{\bar \x}^2$ (due to $\y - \bar\x \in \Null(\A - \lambda_1 \I)$ and $\bar\x \in \ra(\A - \lambda_1\I)$), implies
\[\norm{\x - \y} = \sqrt{\norm{\x - \bar \x}^2 + 1 - \norm{\bar \x}^2},\quad \, \forall\, \y \in S_0.\] This completes the proof for the first case.

We next consider the second case.
If $\norm{\x_1} = 1$, we know that $\x_1 \in \Pi_{S_0}(\x)$. For any $\tilde \x \in \Pi_{S_0}(\x)$, it holds that
\[\tilde \x \in S_0 \subseteq S_1 \quad \mbox{ and } \quad {\rm dist}(\x, S_1) = \norm{\x - \x_1} = {\rm dist}(\x, S_0)  = \norm{\x - \tilde \x},\]
and consequently, $\tilde \x = \Pi_{S_1}(\x) = \x_1$ {due to the uniqueness of the projection onto the convex set $S_1$}. %
Hence, $\Pi_{S_0}(\x)$ is a singleton, i.e., \[\x_0 := \Pi_{S_0}(\x) = \x_1= \bar \x + \norm{\x_1 - \bar \x}\frac{\x_1 - \bar \x}{\norm{\x_1 - \bar\x}}.\]
Now we consider the case with $\norm{\x_1}<1$.
Without loss of generality, assume that the null space of $\A-\lambda_1\I$ is spanned by
an orthogonal basis $\{\v_1,\v_2,\ldots,\v_k\}$ with some $k\ge 1$, $\v_1$ being the non-zero vector $\frac{\x_1-\bar\x}{\|\x_1-\bar\x\|}$ and all other $\v_i$ being unit-norm vectors.
Then, we can rewrite the solution set of $\rm(P_0)$ as $S_0=\{\x\in\R^n \mid \x = \bar\x+\sum_{i=1}^k\alpha_i\v_i, \, \|\x\|=1\}$.
Since $\x_1 = \Pi_{S_1}(\x)$, we have
\[
\inprod{\x - \x_1}{\z - \x_1} \le 0,\quad \forall\, \z \in S_1.
\]
This, together with \eqref{eq:s0s1} and  $\norm{\x_1}<1$, implies that
$\inprod{\x - \x_1}{\d} = 0$ for all $\d\in \Null(\A - \lambda_1 \I)$. Therefore, there exists some vector $\s\in \ra(\A - \lambda_1 \I)$ such that
$\x = \x_1 + \s$.
 Consider the projection of $\x$ onto $S_0$:
\begin{equation}
\label{prob:projxs0}
\min \|\x-\z\|^2~~~{\rm s.t.}~~\z\in S_0,
\end{equation}
which, due to $\x=\bar\x+\|\x_1-\bar\x\|\v_1+\s$ and $\z=\bar\x+\sum_{i=1}^k\mu_i \v_i$, is equivalent to
\begin{equation}\label{pb:310}
\min_{\mu_1,\ldots, \mu_k} \left\|\|\x_1-\bar\x\|\v_1+\s-\sum_{i=1}^k\mu_i \v_i\right\|^2~~~
{\rm s.t.}~~\left\|\bar\x+\sum_{i=1}^k\mu_i\v_i\right\|=1.
\end{equation}
Since $\s,\v_1,\ldots,\v_k$ are orthogonal to each other and $\norm{\v_i} = 1$ for $i=1,\ldots,k$, the objective in \eqref{pb:310} can be further written as:
\[
\left\|\|\x_1-\bar\x\|\v_1+\s-\sum_{i=1}^k\mu_i \v_i\right\|^2 = \|\x_1-\bar\x\|^2-2\mu_1\|\x_1-\bar\x\|+
\sum_{i=1}^{k} \mu_i^2
+\|\s\|^2.
\]
We further note that for any feasible solution $(\mu_1,\ldots,\mu_k)$ to \eqref{pb:310}, it holds that
$
\norm{\bar\x}^2 + \sum_{i=1}^{k}\mu_i^2 = 1.
$
Hence, \eqref{pb:310}, and consequently \eqref{prob:projxs0}, can be equivalently rewritten as
\begin{equation*}
\min_{\mu_1,\ldots, \mu_k}\|\x_1-\bar\x\|^2-2\mu_1\|\x_1-\bar\x\|+1-\|\bar\x\|^2+\|\s\|^2~~~
{\rm s.t.}~~\norm{\bar\x}^2 + \sum_{i=1}^{k}\mu_i^2 = 1,
\end{equation*}
whose unique optimal solution is clearly  $\mu_1^*=\sqrt{1-\|\bar\x\|^2}$ and $\mu_i^*=0$ for $i=2,\ldots,k$.
Therefore,  the projection problem
\eqref{prob:projxs0} has a unique optimal solution that takes the form
$\z^*=\bar\x+t_0\v_1$ with $t_0 > \norm{\x_1 - \bar \x}$ such that $\norm{\z^*} = 1$.
\end{proof}

{Now we are ready to present the connection between $\dt(\x,S_0)$ and $\dt(\x,S_1)$ in the hard case 2 (ii).}
\begin{lem}
        \label{lemma:xS}
        In the hard case 2 (ii), there exists some constant $\gamma\in(0,\pi/2)$ with $\sin\gamma=\frac{1-\|\bar\x\|}{ \sqrt{(1 - \norm{\bar\x})^2 + 1-\|\bar\x\|^2}}$ such that
        \begin{equation}
        \label{eq:distxx0xx1}
        {\rm dist}(\x,S_1) + \sqrt{1 - \norm{\x}^2}  \ge {\rm dist}(\x, S_0)\sin\gamma , \quad \forall \x \in \left\{ \x\in\R^n \mid \norm{\x}\le 1 \right\}.
        \end{equation}

\end{lem}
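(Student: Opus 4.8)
The plan is to exploit the product structure of $S_0$ and $S_1$ relative to the orthogonal splitting $\R^n = R\oplus N$, where $N := \Null(\A-\lambda_1\I)$ and $R := \ra(\A-\lambda_1\I)=N^\perp$. Write $a := \norm{\bar\x}$ and $r := \sqrt{1-\norm{\bar\x}^2}$; recall $a<1$ in the hard case 2 (ii). Since $\bar\x\in R$, the description \eqref{eq:s0s1} shows that $S_1$ is exactly the Euclidean ball of radius $r$ centered at $\bar\x$ inside the affine plane $\bar\x+N$, and $S_0$ is its bounding sphere. A first simplification of the prescribed constant gives $\sin\gamma=\sqrt{(1-a)/2}$ (so $\gamma\in(0,\pi/2)$ is well defined), whence $\cos\gamma=\sqrt{(1+a)/2}$ and, importantly, $\tan\gamma=r/(1+a)$.

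The first step is to reduce the claim to a scalar inequality. Fix $\x$ with $\norm{\x}\le 1$ and decompose $\x=\u+\w$ with $\u\in R$, $\w\in N$. Using Lemma \ref{lem:proj0} (equivalently, by projecting directly onto the ball $\{\v\in N\mid\norm{\v}\le r\}$ and onto the sphere $\{\v\in N\mid\norm{\v}=r\}$ and using orthogonality), one obtains
\[
\dt(\x,S_1)=\sqrt{\norm{\u-\bar\x}^2+[\norm{\w}-r]_+^2},\qquad \dt(\x,S_0)=\sqrt{\norm{\u-\bar\x}^2+(\norm{\w}-r)^2}.
\]
When $\norm{\w}\ge r$ these two quantities coincide, so the asserted inequality is trivial because $\sin\gamma<1$ and $\sqrt{1-\norm{\x}^2}\ge 0$. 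Otherwise $\norm{\w}<r$; put $p:=\norm{\u-\bar\x}\ge 0$, $s:=\norm{\w}\in[0,r)$ and $q:=r-s\in(0,r]$, so that $\dt(\x,S_1)=p$ and $\dt(\x,S_0)=\sqrt{p^2+q^2}$, while $\norm{\u}\le\norm{\bar\x}+\norm{\u-\bar\x}=a+p$ yields $\sqrt{1-\norm{\x}^2}=\sqrt{1-\norm{\u}^2-s^2}\ge[1-(a+p)^2-s^2]_+^{1/2}$. Thus it suffices to prove, for all $p\ge 0$ and $s\in[0,r)$ with $q=r-s$, the scalar estimate
\[
p+[1-(a+p)^2-s^2]_+^{1/2}\ \ge\ \sin\gamma\,\sqrt{p^2+q^2}.
\]

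The second step is to establish this estimate by splitting on the sign of $\Delta(p):=1-(a+p)^2-s^2=(2rq-q^2)-(2ap+p^2)$. If $\Delta(p)\le 0$, the left-hand side is $p$, and the regime condition reads $p(2a+p)\ge q(2r-q)\ge qr$ (using $q\le r$); since $t\mapsto t(2a+t)$ is increasing on $[0,\infty)$ and $q\tan\gamma\,(2a+q\tan\gamma)\le qr$ (which reduces to $q\le r$), it follows that $p\ge q\tan\gamma$, and this is equivalent to $p\ge\sin\gamma\sqrt{p^2+q^2}$. If $\Delta(p)>0$, regard $p$ as ranging over $[0,p_{\max}]$, where $p_{\max}:=\sqrt{a^2+2rq-q^2}-a\ge 0$ is the nonnegative root of $\Delta$, and set $h(p):=p+\sqrt{\Delta(p)}-\sin\gamma\sqrt{p^2+q^2}$. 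Since $\Delta$ is a concave quadratic, $\sqrt{\Delta(\cdot)}$ is concave on $[0,p_{\max}]$, and $-\sin\gamma\sqrt{p^2+q^2}$ is concave as well, so $h$ is concave there; hence it is enough to check $h\ge 0$ at the two endpoints. At $p=0$ this amounts to $\sqrt{2rq-q^2}\ge q\sin\gamma$, i.e.\ $2r-q\ge q\tfrac{1-a}{2}$, which holds since $q\le r$; at $p=p_{\max}$, where $\Delta=0$, it reduces to $p_{\max}\ge q\tan\gamma$, which follows from $p_{\max}=\tfrac{q(2r-q)}{\sqrt{a^2+2rq-q^2}+a}\ge\tfrac{qr}{1+a}$ upon using $a^2+2rq-q^2\le a^2+r^2=1$ and $2r-q\ge r$. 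Concavity then gives $h\ge 0$ on all of $[0,p_{\max}]$, which settles this case and hence the lemma.

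I expect the case $\Delta(p)>0$ to be the crux: a crude estimate of $\sqrt{1-\norm{\x}^2}$ does not suffice, and squaring the target inequality produces an unwieldy polynomial relation in $p$ and $q$. The point that makes it tractable is the observation that, with $q$ held fixed, the relevant function of $p$ is concave on the natural interval, hence is controlled by its endpoint values — one of these endpoints being precisely the degenerate configuration handled in the first case. As a consistency check, the constant $\sin\gamma$ is sharp: equality holds, for instance, when $\bar\x={\bf 0}$ and $\x$ is a unit vector in $R$, since then $\dt(\x,S_1)=1$, $\dt(\x,S_0)=\sqrt{2}$ and $\sqrt{1-\norm{\x}^2}=0$.
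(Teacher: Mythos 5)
Your proof is correct, and it takes a genuinely different — and substantially cleaner — route than the paper's argument. The paper first characterizes $\Pi_{S_0}$ via Lemma \ref{lem:proj0}, then sets up a constrained minimization problem \eqref{prob:zy1y0} over $\z$, reduces it to the two-variable program \eqref{prob:mu_rd}, splits on $n=2$ versus $n\ge 3$, and finishes by geometric angle comparisons ($\tilde\theta\ge\gamma$, $\sin\theta\ge\sin\tilde\theta$, etc., supported by Figure \ref{fig:angles}). Your argument instead exploits the orthogonal splitting $\R^n=R\oplus N$ directly to get closed-form expressions $\dt(\x,S_1)=\sqrt{p^2+[s-r]_+^2}$ and $\dt(\x,S_0)=\sqrt{p^2+(s-r)^2}$, reducing the lemma to a two-parameter scalar inequality in $p=\norm{\u-\bar\x}$ and $q=r-s$. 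The scalar inequality is then settled by a transparent split on the sign of $\Delta(p)=1-(a+p)^2-s^2$, with the harder branch $\Delta>0$ dispatched by observing that $h(p)=p+\sqrt{\Delta(p)}-\sin\gamma\sqrt{p^2+q^2}$ is concave on $[0,p_{\max}]$ and nonnegative at both endpoints. I verified all the algebra (in particular $\sin\gamma=\sqrt{(1-a)/2}$, $\tan\gamma=r/(1+a)$, the monotonicity argument for $\Delta\le 0$, the endpoint checks $h(0)\ge 0$ and $h(p_{\max})\ge q\tan\gamma - q\tan\gamma\cdot\sin\gamma/\tan\gamma\cdots$, i.e.\ $p_{\max}\ge q\tan\gamma$, and the sharpness example at $\bar\x=\mathbf 0$), and it all goes through. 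What your approach buys is the elimination of the paper's auxiliary optimization problem and the $n=2$ vs.\ $n\ge 3$ dimension dichotomy, and no reliance on the detailed projection analysis in Lemma \ref{lem:proj0}; what the paper's approach arguably buys is a more visual geometric picture. Your version is shorter and more elementary, and I would regard it as a genuine simplification.
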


\begin{proof}
For any given $\x\in \R^n$ with $\norm{\x}\le 1$ and any given $\x_0 \in \Pi_{S_0}(\x)$, define $\v = \x_0 - \bar\x$.
Since  $\norm{\bar\x} < 1$ in the hard case 2 (ii) and $\norm{\x_0} = 1$, we have that $\norm{\v} \neq 0$.
Define line segment $L:= \left\{\bar\x + \alpha \v \in \R^n \mid \alpha \in [0,1]\right\}$. We note from \eqref{eq:s0s1} and $\bar\x = (\A-\lambda_1\I)^\dagger \b$ that $\inprod{\v}{\bar\x} = 0$.	
If $\bar\x = \Pi_{S_1}(\x)$, then $\Pi_{S_1}(\x)\in L$;
otherwise, by case 2 in Lemma \ref{lem:proj0}, we know that $\x_0 = \Pi_{S_0}(\x) = \bar\x + t (\Pi_{S_1}(\x) - \bar\x)$ for some constant $t \ge 1$. Hence, $\Pi_{S_1}(\x) - \bar\x = \v/t$, i.e., $\Pi_{S_1}(\x) \in L$.
Since $L\subseteq S_1$, it further holds that
\begin{equation}
\label{eq:piLeqpiS}
\Pi_L(\x) = \Pi_{S_1}(\x).
\end{equation}
Let $\Pi_{L}(\x) = \bar\x + \alpha^*\v$. By the definition of $L$ and the properties of the projection operator $\Pi_L$,
it is not difficult to see that
\begin{equation*}
\inprod{\x - \Pi_{L}(\x)}{\v} \left\{
\begin{aligned}
& = 0, \mbox{ if } \alpha^*\in (0,1),\\
& \le 0, \mbox{ if } \alpha^* = 0,\\
& \ge 0, \mbox{ if } \alpha^* = 1.
\end{aligned}
\right.
\end{equation*}
We first
consider the case where $\inprod{\x - \Pi_{L}(\x)}{\v}>0$. In this case, we  have $\alpha^*=1$ and thus
\[
{\rm dist}(\x,S_1) = \norm{\x - \Pi_{S_1}(\x)} = \norm{\x - \Pi_{L}(\x)} = \norm{\x-(\bar\x + \v)}
= \norm{\x - \x_0} = {\rm dist}(\x,S_0),
\]
i.e., \eqref{eq:distxx0xx1} holds trivially.
Then, we
argue that  $\inprod{\x - \Pi_{L}(\x)}{\v} < 0$ cannot occur. Indeed, if this is not the case,
we must have
 $\alpha^* =0$ and  $\inprod{\x - \Pi_{L}(\x)}{\v} < 0$ and thus
 $ \inprod{\x - \bar\x}{\v} < 0$.
Hence we further have that
\begin{equation}
\label{eq:L311}
\begin{array}{lll}
\norm{\x - (\bar\x - \v)}^2 &={}& \norm{\x - \bar\x}^2 + 2\inprod{\x - \bar\x}{\v}
+\norm{\v}^2 \\
&<{}& \norm{\x - \bar\x}^2 - 2\inprod{\x - \bar\x}{\v}
+\norm{\v}^2 \\
&={}& \norm{\x - (\bar\x +\v)}^2 = \norm{\x - \x_0}^2.
\end{array}
\end{equation}
Since {$\inprod{\bar\x}{\v} = 0$}, it holds that
$\norm{\bar\x - \v}^2 = \norm{\bar\x}^2 + \norm{\v}^2 = \norm{\bar\x + \v}^2 = 1$. This, together with the definition of $S_0$ in \eqref{eq:s0s1}, implies that $\bar\x - \v \in S_0$.
Since $\x_0 \in \Pi_{S_0}(\x)$, it holds that $\norm{\x - \x_0} \le \norm{\x - (\bar\x - \v)}$, which contradicts \eqref{eq:L311}.

Thus, in the subsequent analysis, we only need to focus on the case where $\inprod{\x - \Pi_{L}(\x)}{\v} = 0$.

{We first consider the case where $\bar \x=\bf0$. In this case, $\v=\x_0$.
Let $\u=\x-\Pi_L(\x)=\x-\alpha^*\v$. Hence we have $\u\perp\v$.
It holds that
$
\dt(\x, S_1)=\|\u\|, $
and
\[ \dt(\x,S_0) \le \|\x-\x_0\|=\|\u-(1-\alpha^*)\v\|.
\]
We then claim that \eqref{eq:distxx0xx1} holds.
To see this, since
\[
\dt(\x,S_1)^2+1-\|\x\|^2 \le \left(\dt(\x,S_1)+\sqrt{1-\|\x\|^2}\right)^2,
\]
it suffices to show
\[
\dt(\x,S_0)^2\sin\gamma^2 \le\dt(\x,S_1)^2+1-\|\x\|^2, \]
which
is equivalent to
\begin{equation}
\label{eq:equi01}
\|\u+(1-\alpha^*)\v\|^2\le 2\left(\|\u\|^2+1-\|\x\|^2\right)
\end{equation}
due to $\sin\gamma=1/\sqrt2$ (since $\bar\x=0$), $\x=\u+\alpha^*\v$ and $\u\perp\v$. Meanwhile,
\eqref{eq:equi01} is further equivalent to
\[
\|\u\|^2+ (1-\alpha^*)^2\|\v\|^2+2\|\alpha^*\v\|^2\le 2,
\]
which is trivial since $\|\x\|^2=\|\u\|^2+\|\alpha^*\v\|^2\le1$, $0\le\alpha^*\le1$ and $\|\v\|=1$.}

In the following of this proof, we consider the remaining case where $\bar\x \neq \bf0$.
In this case, we derive a lower bound of $\norm{\x - \Pi_{S_1}(\x)} $,{ which is equivalent to $\norm{\x - \Pi_{L}(\x)}$ due to \eqref{eq:piLeqpiS},} by considering
the following optimization problem:
        \begin{equation}
        \label{prob:zy1y0}
        \min_{\z \in \R^n} \left\{ \norm{\z - \Pi_{L}(\z)} \mid \inprod{\z - \Pi_{L}(\z)}{\v} = 0, \, \norm{\z} = \norm{\x}, \, \norm{\z - \x_0} = {\rm dist}(\x,S_0)
        \right\}.
        \end{equation}
From \eqref{eq:piLeqpiS} and $\inprod{\x - \Pi_{L}(\x)}{\v} = 0$, we see that problem \eqref{prob:zy1y0} has a non-empty closed and bounded feasible set as $\x$ is always a feasible solution. Since the objective is continuous in $\z$, by Weierstrass theorem we know that problem \eqref{prob:zy1y0} has a non-empty and compact solution set.
Let $\z^*$ be any optimal solution to problem \eqref{prob:zy1y0}. We know that
${\rm dist}(\x, S_1) =\norm{\x - \Pi_{L}(\x)}\ge \norm{\z^* - \Pi_{L}(\z^*)}$ as $\x$ is a feasible solution to \eqref{prob:zy1y0}.
Since $\|\x\|=\|\z^*\|$,  it further holds that
\begin{equation}
\label{eq:distrelation}
{\rm dist}(\x,S_1) + \sqrt{1 - \norm{\x}^2} \ge \norm{\z^* - \Pi_{L}(\z^*)} + \sqrt{1 - \norm{\z^*}^2}.
\end{equation}

        Recall that $\bar\x\neq\bf0$ and $\v\neq\bf0$. For any $\z \in \R^n$, since $\v\perp\bar \x$, we note that there exist $\lambda, \mu \in\R$ and $\u\in\R^n$  satisfying  $\inprod{\u}{\bar \x} = \inprod{\u}{\v} = 0$ {(note that $\u=0$ if $n=2$)} such that
        \begin{equation*}
        \label{eq:zdecomp}
        \z = \lambda \bar\x + \mu \v + \u.
        \end{equation*}
        Given the structure of $L$, we know that $\Pi_{L}(\z) = \bar \x + \alpha \v$ for some $\alpha \in [0,1]$.
        Now, if, in addition, $\inprod{\z - \Pi_{L}(\z)}{\v} = 0$, it then holds that $\mu = \alpha \in [0,1]$, and
        {since $\x_0 - \Pi_L(\z)=(\bar\x+\v)-(\bar\x+\mu\v) =(1-\mu)\v $, } we further know that
        $\inprod{\z -\Pi_L(\z)}{\x_0 - \Pi_L(\z)} = 0$.
        Hence, for any feasible solution $\z$ to problem \eqref{prob:zy1y0}, it holds that
        \begin{equation}
        \label{eq:dist}
        \norm{\z - \Pi_{L}(\z)}^2
        = \norm{\z - \x_0}^2 - \norm{\x_0 - \Pi_{L}(\z)}^2
        = {\rm dist}^2(\x, S_0) - (1-\mu)^2\norm{\v}^2.
        \end{equation}
        Therefore, problem \eqref{prob:zy1y0} can be equivalently reformulated as
        \begin{equation}
        \label{prob:mu}
        \begin{aligned}
        \min_{\u\in\R^n, \, \mu,\lambda\in\R} \quad &\mu \\
        {\rm s.t.}\quad & \lambda^2 \norm{\bar \x}^2 + \mu^2 \norm{\v}^2 + \norm{\u}^2 = \norm{\x}^2, \\
        & (\lambda -1)^2 \norm{\bar \x}^2 + (\mu -1)^2\norm{\v}^2 + \norm{\u}^2 = {\rm dist}^2(\x,S_0), \\
        & \inprod{\u}{\bar\x} = 0, \, \inprod{\u}{\v} = 0,\, \mu\in [0,1].
        \end{aligned}
        \end{equation}
We proceed the proof by considering two cases where the dimension of problem \eqref{prob:zy1y0} is  $n = 2$ or $n\ge 3$.
\begin{enumerate}
\item[{\bf Case I:}] If $n = 2$, then for any feasible solution $(\mu,\lambda, \u)$ to \eqref{prob:mu}, it holds that $\u \equiv {\bf 0}$. Let $(\lambda^*,\mu^*,{\bf 0})$ be an optimal solution to \eqref{prob:mu}. Then,
$\z^*=\lambda^*\bar\x+\mu^*\v$ is an optimal solution to \eqref{prob:zy1y0}.
Define $\tilde \z = \lambda^*\beta\bar\x+\mu^*\v $,
where $\beta \ge 1$ is some constant such that $\norm{\tilde \z} = 1$.
Note that the above construction of $\tilde \z$ implies that $\Pi_{L}(\tilde \z) = \Pi_{L}(\z^*)$. Let $\tilde\theta\in(0,\pi/2)$ be the angle between $\tilde \z - \x_0$ and $\bar \x - \x_0$.
{Note that $\gamma\in(0,\pi/2)$ can be regarded as the angle between $\frac{\bar\x}{\|\bar\x\|}-\x_0$ and $\bar\x-\x_0$.}  Then, geometric arguments assert that $\tan \tilde \theta \ge \tan \gamma$, i.e,  $\tilde\theta \ge \gamma$ (see Figure \ref{fig:angles} for the illustration).
Hence,
\[
\norm{\tilde \z - \Pi_{L}(\tilde \z)} = \norm{\tilde \z -\Pi_{L}(\z^*)}
= \norm{\tilde \z - \x_0} \sin\tilde \theta \ge \norm{\tilde \z - \x_0}\sin \gamma.
\]
First consider the case where $\norm{\tilde \z - \x_0} \ge \norm{\z^* - \x_0}$.
In this case, we know that
\begin{equation}   \label{eq:distz}
\begin{aligned}
&   \sqrt{1 - \norm{\z^*}^2} + \norm{\z^* - \Pi_{L}(\z^*)} \ge \|\tilde \z-\z^*\|+ \norm{\z^* - \Pi_{L}(\z^*)}  \\
\ge&\norm{\tilde \z - \Pi_{L}(\z^*)} \ge \norm{\tilde \z - \x_0} \sin \gamma \ge \norm{\z^* - \x_0} \sin \gamma = {\rm dist}(\x, S_0) \sin\gamma,
\end{aligned}
\end{equation}
where the first inequality follows from the facts that $\inprod{\tilde \z-\z^*}{\z^*}=(\beta-1)(\lambda^*)^2\norm{\bar\x}^2\ge0$ and  $1=\|\tilde\z\|^2=\|\tilde \z-\z^*\|^{2}+\|\z^*\|^2+2\inprod{\tilde \z-\z^*}{\z^*}\ge\|\tilde \z-\z^*\|^{2}+\|\z^*\|^2$.
Next, we consider the case where $\norm{\tilde \z - \x_0} < \norm{\z^* - \x_0}$. To proceed, define by $\theta\in (0,\pi/2)$ the angle between $\z^* - \x_0$ and $\bar\x - \x_0$. %
{Since  $\tilde\theta\in (0,\pi/2)$, and  $\cos\theta=\frac{\|\x_0-\Pi_{L}(\z^*)\|}{\norm{\z^* - \x_0}} < \frac{\|\x_0-\Pi_{L}(\z^*)\|}{\norm{\tilde\z - \x_0}}= \cos\tilde\theta$, we see that $\sin\theta>\sin\tilde\theta$.}
Then, it holds that
\begin{equation}
\label{eq:distz2}
\norm{\z^* - \Pi_{L}(\z^*)} = \norm{\z^* - \x_0}\sin \theta > {\rm dist}(\x,S_0) \sin\tilde\theta  \ge {\rm dist}(\x, S_0)\sin \gamma.
\end{equation}
From \eqref{eq:distrelation}, \eqref{eq:distz} and \eqref{eq:distz2}, we see that \eqref{eq:distxx0xx1} holds.
\item[{\bf Case II:}] If $n\ge3$, we observe that $\u$ can be eliminated from problem \eqref{prob:mu}
by using the fact that $\|\bar\x\|^2+\|\v\|^2=\|\x_0\|^2=1$. In particular, problem \eqref{prob:mu} can be rewritten as follows:
\begin{equation}
\label{prob:mu_rd}
        \begin{aligned}
\min_{\mu,\lambda\in\R} \quad &\mu \\
{\rm s.t.}\quad & \lambda^2 \norm{\bar \x}^2 + \mu^2 \norm{\v}^2 \le  \norm{\x}^2, \\
& (\lambda -1)^2 \norm{\bar \x}^2 + (\mu -1)^2\norm{\v}^2 \le {\rm dist}^2(\x,S_0), \\
& \norm{\x}^2 +1 - 2\lambda\norm{\bar \x}^2 - 2\mu\norm{\v}^2 = {\rm dist}^2(\x, S_0),\\
& \mu\in [0,1],
\end{aligned}
\end{equation}
where the equality constraint comes from eliminating $\|\u\|^2$ in the first two constraints in \eqref{prob:mu}.
Indeed, for any feasible solution $(\lambda,\mu,\u)$ to \eqref{prob:mu}, $(\lambda,\mu)$ is feasible to \eqref{prob:mu_rd}. Meanwhile, if  $(\lambda,\mu)$ is feasible to \eqref{prob:mu_rd}, since $n\ge 3$, one can always find $\u\in\R^n$ satisfying $\|\u\|^2=\|\x\|^2-\big(\lambda^2 \norm{\bar \x}^2 + \mu^2 \norm{\v}^2\big) \ge 0$ and $\inprod{\u}{\bar\x} = 0, \, \inprod{\u}{\v} = 0$ such that
$(\lambda,\mu,\u)$ is a feasible solution to \eqref{prob:mu}.
Consequently, $(\lambda^*,\mu^*,\u^*)$ is an optimal solution to \eqref{prob:mu} if and only if $(\lambda^*,\mu^*)$ is an optimal solution to \eqref{prob:mu_rd}.

Let $\mu^*$ be the optimal value of problem \eqref{prob:mu_rd}. Then, $(\lambda^*,\mu^*)$ with $\lambda^* = \big(1 + \norm{\x}^2 - 2\mu^*\norm{\v}^2 - {\rm dist}^2(\x,S_0)\big)/2\norm{\bar\x}^2$ is the unique optimal solution to problem \eqref{prob:mu_rd}.
We consider three cases here:
\begin{itemize}
        \item[\rm (i)] $\mu^* \in (0,1)$.
        {Note that the first three constraints in \eqref{prob:mu_rd} result a line segment, denoted by  $F$, where its endpoints
        are the two intersection points (note that since $\x$ is  a feasible solution of \eqref{prob:zy1y0} with $\u={\bf 0}$, the two ellipses must intersect) of the  two ellipses:
\[
\left\{
(\lambda, \mu) \mid  \lambda^2 \norm{\bar \x}^2 + \mu^2 \norm{\v}^2 =  \norm{\x}^2
\right\}\]
and
\[
\left\{(\lambda, \mu) \mid  (\lambda -1)^2 \norm{\bar \x}^2 + (\mu -1)^2\norm{\v}^2 = {\rm dist}^2(\x,S_0)
\right\}.
\]
Hence the feasible set to \eqref{prob:mu_rd}  can be written as $ F\cap \{(\lambda,\mu) \mid 0\le\mu\le1\}$.
Since $\norm{\bar\x} \neq 0$, the equality constraint in \eqref{prob:mu_rd} implies that $F$ cannot be parallel to the $\lambda$-axis. Now, $\mu^* \in (0,1)$ implies that the optimal solution to \eqref{prob:mu_rd} must be an endpoint of $F$.
This further implies that the first two inequality constraints in \eqref{prob:mu_rd} are active at the optimal solution $(\lambda^*,\mu^*)$.}
Then, {$(\lambda^*,\mu^*, \bf 0)$} is an optimal solution to \eqref{prob:mu}.
Therefore, $\z^*=\lambda^*\bar\x+\mu^*\v$ is an optimal solution to problem \eqref{prob:zy1y0}. The desired result then follows from the same arguments in Case I.
\item [\rm (ii)] $\mu^* = 1$. In this case, we have from \eqref{eq:dist} that
$\norm{\z^* - \Pi_{L}(\z^*)} = {\rm dist}(\x, S_0).$
Hence, we have
\[
\sqrt{1 - \norm{\x}^2} + {\rm dist}(\x, S_1) \ge \sqrt{1 - \norm{\x}^2} + \norm{\z^* - \Pi_{L}(\z^*)} \ge {\rm dist}(\x, S_0) \ge {\rm dist}(\x, S_0)\sin\gamma,
\]
where the first inequality follows from the optimality of $\z^*$ to \eqref{prob:zy1y0} and ${\rm dist}(\x,S_1)=\|\x-\Pi_{S_1}(\x)\|=\|\x-\Pi_L(\x)\|$.
\item[\rm (iii)] $\mu^* = 0$. In this case, $(\lambda^*,0,\u^*)$ with some $\u^*$ satisfying $\|\u^*\|^2=\|\x\|^2- (\lambda^*)^2 \norm{\bar \x}^2 $ and $\inprod{\u^*}{\bar\x} = 0, \, \inprod{\u^*}{\v} = 0$ is an optimal solution to \eqref{prob:mu}.
Then, $\z^* = \lambda^*\bar\x + \u^*$ is an optimal solution to \eqref{prob:zy1y0} and
 $\Pi_{L}(\z^*) = \bar\x$. Let $\tilde \z := \z^* + \beta \v = \lambda^*\bar\x + \u^* + \beta\v$ with $\beta \ge 0$ such that
$\norm{\tilde \z} = 1$. Then, we see that
\begin{equation}
\label{eq:caseiii}
\norm{\tilde\z - \bar\x}^2 = (\lambda^* - 1)^2 \norm{\bar\x}^2 + \norm{\u^*}^2 + \beta^2 \norm{\v}^2
= 1 - 2\lambda^*\norm{\bar\x}^2 + \norm{\bar\x}^2 = {\rm dist}^2(\x,S_0),
\end{equation}
where the second equality holds since $\norm{\tilde\z} = 1$ and the third equality follows from the equality constraint in problem \eqref{prob:mu_rd}. Meanwhile, it holds that
\[
1 - \norm{\z^*}^2 = 1 - (\lambda^*)^2\norm{\bar\x}^2 - \norm{\u^*}^2  = \beta^2 \norm{\v}^2
=\norm{\tilde \z - \z^*}^2.
\]
Hence,  we have
\begin{align*}
\sqrt{1 - \norm{\x}^2} + {\rm dist}(\x, S_1) \ge{}& \sqrt{1 - \norm{\z^*}^2} + \norm{\z^* - \Pi_{L}(\z^*)} \\
={}&  \norm{\tilde \z - \z^*} + \norm{\z^* - \bar\x}
\\
\ge{}& \norm{\tilde \z - \bar\x} \\
\ge{}&{\rm dist}(\x, S_0) \sin \gamma,
\end{align*}
where the first inequality follows from {\eqref{eq:distrelation}}{, and the last inequality follows from \eqref{eq:caseiii}.}
\end{itemize}
\end{enumerate}
        We have shown that \eqref{eq:distxx0xx1} holds
        and thus completed the proof of Lemma \ref{lemma:xS}.
\end{proof}

{ 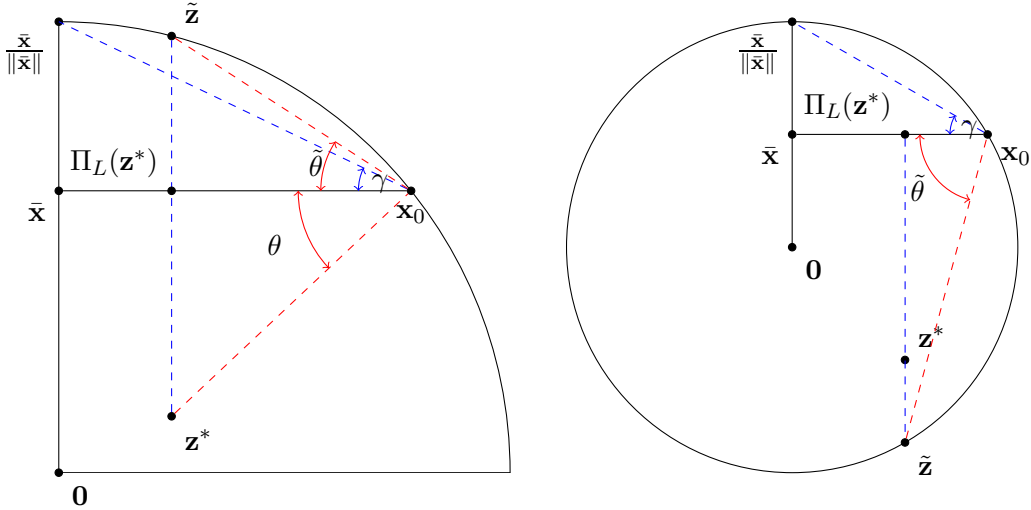
\begin{figure}
                \centering
                \begin{tikzpicture}%
                [
                scale=1.5,
                point/.style = {draw, circle,  fill = black, inner sep = 1pt},
                dot/.style   = {draw, circle,  fill = black, inner sep = .2pt},
                line width/.style = 1pt,
                ]

                \begin{scope}
                \clip (0,0) rectangle (4,4);
                \draw (0,0) circle(4);
                \draw (0,0) -- (4,0);
                \end{scope}

                \node (origin) at (0,0)[point, label = {below right:${\bf 0}$}]{};
                \draw (0,0) -- (0,4);
                \node (xbar) at (0,2.5) [point, label = {below left:$\bar\x$}]{};
                \node (norxbar) at (0,4)  [point, label = {below left:$\frac{\bar\x}{\|\bar\x\|}$}]{};
                \node (x0) at (3.1225,2.5)[point, label = {below:$\x_0$}]{};
                \draw (xbar) -- (x0);
        \draw[dashed, blue] (x0) -- (0,4);
        \node (zstar) at (1,0.5) [point, label = {below right:${\z^*}$}]{};
        \node (ztilde) at (1,3.8730) [point, label = {above right:${\tilde\z}$}]{};
         \draw[dashed, blue] (zstar) -- (ztilde);
         \draw[dashed, red] (ztilde) -- (x0);
         \draw[dashed, red] (zstar) -- (x0);
         \node (plzstar) at (1,2.5) [point, label = {above left:${\Pi_{L}(\z^*)}$}]{};
         \node (xbarend) at (0,4)[]{};
         \node (xbarstart) at (0,1)[]{};
         \draw pic["$\gamma$", draw=blue!100, <->, angle eccentricity=0.6, angle radius=0.7cm]
         {angle=xbarend--x0--xbar};

         \draw pic["$\tilde \theta$", draw=red!100, <->, angle eccentricity=1.1, angle radius=1.2cm]
         {angle=ztilde--x0--xbar};

         \draw pic["$\theta$", draw=red!100, <->, angle eccentricity=1.3, angle radius=1.5cm]
         {angle=xbar--x0--zstar};

         \draw (6.5,2) circle (2);
         \node (origin2) at (6.5,2)[point, label = {below right:${\bf 0}$}]{};
         \draw (6.5,2) -- (6.5,4);
         \node (xbar1) at (6.5,3) [point, label = {below left:$\bar\x$}]{};
         \node (norxbar1) at (6.5,4)  [point, label = {below left:$\frac{\bar\x}{\|\bar\x\|}$}]{};
         \node (x01) at (8.2321,3)[point, label = {below right:$\x_0$}]{};
         \draw (xbar1) -- (x01);
         \draw[dashed, blue] (x01) -- (6.5,4);
         \node (zstar1) at (7.5,1) [point, label = {above right:${\z^*}$}]{};
         \node (ztilde1) at (7.5,0.2679) [point, label = {below right:${\tilde\z}$}]{};
                 \node (xbarend1) at (6.5,4)[]{};
         \draw[dashed, red] (ztilde1) -- (x01);
         \node (plzstar1) at (7.5,3) [point, label = {above left:${\Pi_{L}(\z^*)}$}]{};
          \draw[dashed, blue] (plzstar1) -- (ztilde1);
          \draw pic["$\gamma$", draw=blue!100, <->, angle eccentricity=0.5, angle radius=0.5cm]
          {angle=xbarend1--x01--xbar1};

          \draw pic["$\tilde \theta$", draw=red!100, <->, angle eccentricity=1.3, angle radius=0.9cm]
          {angle=xbar1--x01--ztilde1};
                \end{tikzpicture}
                \caption{Illustration of two possible scenarios of the positions of $\tilde\z$. In either case, it holds that $\tilde\theta \ge \gamma$.
                {In the first case, it also holds that $\cos\theta\le\cos\tilde \theta$ (or equivalently, $\sin\theta\ge\sin\tilde\theta$).}}
                \label{fig:angles}
\end{figure}}

Note that Lemma \ref{lem:EBnonneg} provides certain error bound inequality involving ${\rm dist}(\x,S_1)$ for the convex problem ($\rm P_1$) and Lemma \ref{lemma:xS} connects ${\rm dist}(\x,S_1)$ and ${\rm dist}(\x,S_0)$. Using these results, we obtain in the following lemma the desired error bound result in the hard case 2 (ii).
\begin{lem}
\label{lem:h2ii}
In the hard case 2 (ii), there exists a constant $\tau>0$ such that for all $\x\in \R^n$,
\begin{equation*}
\dt(\x,S_0)\le \tau\big(f(\x)-f^*+\delta_{B}(\x)\big)^{1/2}.
\end{equation*}
\end{lem}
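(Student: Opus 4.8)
The plan is to assemble the conclusion from three ingredients that are already in place: the H\"olderian error bound for the convex relaxation $(\rm P_1)$ supplied by Lemma \ref{lem:EBnonneg}, the geometric comparison between $\dt(\x,S_1)$ and $\dt(\x,S_0)$ supplied by Lemma \ref{lemma:xS}, and the elementary identity relating $f$, $\tilde f$ and $1-\norm{\x}^2$ recorded in \eqref{eq:fandtf}. First I would dispose of the trivial case $\norm{\x}>1$: there $\delta_B(\x)=+\infty$, so the right-hand side is $+\infty$ and the inequality holds vacuously. Hence it suffices to treat $\x\in B$.

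Next I would apply Lemma \ref{lem:EBnonneg}, case 3, to problem $(\rm P_1)$. In the hard case 2 (ii), the matrix $\A-\lambda_1\I$ has smallest eigenvalue $0$, and $\min_{\x\in\R^n}\tilde f(\x)=f^*$ is attained at the interior point $\bar\x=(\A-\lambda_1\I)^\dagger\b$ with $\norm{\bar\x}<1$; these are precisely the hypotheses of case 3 (this is the one place where the defining condition $\norm{(\A-\lambda_1\I)^\dagger\b}<1$ of hard case 2 (ii) is used, distinguishing it from case 2 / hard case 2 (i)). This yields a constant $\tau_1>0$ with
\[
\dt(\x,S_1)\le\tau_1\big(\tilde f(\x)-f^*\big)^{1/2},\qquad\forall\,\x\in B,
\]
using that $\tilde f\ge f^*$ everywhere and $\delta_B(\x)=0$ on $B$.

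Then I would bound both terms on the left of \eqref{eq:distxx0xx1} by $(f(\x)-f^*)^{1/2}$. From \eqref{eq:fandtf}, for $\x\in B$ we have $0\le f(\x)-\tilde f(\x)=-\lambda_1\big(1-\norm{\x}^2\big)$, hence $\tilde f(\x)-f^*\le f(\x)-f^*$ and $1-\norm{\x}^2=\tfrac{1}{-\lambda_1}\big(f(\x)-\tilde f(\x)\big)\le\tfrac{1}{-\lambda_1}\big(f(\x)-f^*\big)$. Substituting these into the bound above and into $\sqrt{1-\norm{\x}^2}$, and invoking Lemma \ref{lemma:xS}, we obtain for every $\x\in B$
\[
\dt(\x,S_0)\sin\gamma\le\dt(\x,S_1)+\sqrt{1-\norm{\x}^2}\le\Big(\tau_1+\tfrac{1}{\sqrt{-\lambda_1}}\Big)\big(f(\x)-f^*\big)^{1/2},
\]
and dividing by $\sin\gamma>0$ (recall $\gamma\in(0,\pi/2)$ from Lemma \ref{lemma:xS}) completes the argument with $\tau=\tfrac{1}{\sin\gamma}\big(\tau_1+1/\sqrt{-\lambda_1}\big)$, after recombining with the trivial case $\norm{\x}>1$.

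I do not expect a genuine obstacle here, since the heavy lifting was done in Lemmas \ref{lem:EBnonneg} and \ref{lemma:xS}; the only points requiring care are verifying that $(\rm P_1)$ indeed falls into case 3 (not case 2) of Lemma \ref{lem:EBnonneg}, and keeping straight which problem the symbol $f^*$ refers to when transferring the error bound from $(\rm P_1)$ back to $(\rm P_0)$ via \eqref{eq:fandtf}.
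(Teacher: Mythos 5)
Your proposal is correct and takes essentially the same route as the paper: both proofs combine Lemma \ref{lem:EBnonneg} (case 3) applied to $(\rm P_1)$, the geometric comparison of Lemma \ref{lemma:xS}, and the identity \eqref{eq:fandtf}. The only cosmetic difference is in the final algebra — you bound $\dt(\x,S_1)$ and $\sqrt{1-\norm{\x}^2}$ each by a multiple of $(f(\x)-f^*)^{1/2}$ and add, whereas the paper first squares the inequality from Lemma \ref{lemma:xS} and absorbs the $1-\norm{\x}^2$ term by assuming $\tau_1\ge 1/\sqrt{-\lambda_1}$; both yield the same conclusion.
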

\begin{proof}
Recall that $\tilde f(\x):=f(\x)-\lambda_1(\x^T\x-1)$ is convex. Note that in hard case 2 (ii), it holds that $\min \tilde f(\x) = f^*$ and $\|\bar\x\|<1$. Then, by Lemma \ref{lem:EBnonneg} item 3, we know that there exist a constant $\tau_1 > 0$ such that
\begin{equation}
\label{eq:xS1}
\dt(\x,S_1)\le \tau_1\big(\tilde f(\x)-f^*+\delta_B(\x)\big)^{1/2}, \quad \forall\, \x\in\R^n.
\end{equation}
Without loss of generality, one can assume that $\tau_1 \ge 1/\sqrt{-\lambda_1}$, i.e., $2\tau_1^2 \lambda_1 + 2 \le 0$.
By Lemma \ref{lemma:xS}, we know that
\[
{\rm dist}(\x,S_0)\sin\gamma \le {\rm dist}(\x,S_1) + \sqrt{1 - \norm{\x}^2}, \quad \forall \norm{\x} \le 1,
\]
where $\sin\gamma = \frac{1-\|\bar\x\|}{ \sqrt{(1 - \norm{\bar\x})^2 + 1-\|\bar\x\|^2}}$.
Together with \eqref{eq:xS1}, this implies that for all $\norm{\x}\le 1$,
\begin{align*}
{\rm dist}^2(\x,S_0)\sin^2\gamma \le {}& 2 {\rm dist}^2(\x,S_1) + 2(1 -\norm{\x}^2) \\
\le {}& 2\tau_1^2(\tilde f(\x) - f^* + \delta_{B}(\x)) + 2(1 - \norm{\x}^2)\\
= {}& 2\tau_1^2(f(\x) - f^*+\delta_{B}(\x)) + (2 + 2\tau_1^2\lambda_1)(1 - \norm{\x}^2) \\
\le {}& 2\tau_1^2(f(\x) - f^*+\delta_{B}(\x)),
\end{align*}
where the last inequality holds since $2 + 2\tau_1^2\lambda_1\le 0$.
Thus, we know that
\[
{\rm dist}(\x,S_0) \le \frac{\sqrt{2}\tau_1}{\sin \gamma}\big(f(\x) - f^* + \delta_B(\x)\big)^{1/2}, \quad \forall\, \x\in \R^n.
\]
We complete the proof of this lemma.
\end{proof}

Now, with Lemmas \ref{lem:EBnonneg}, \ref{lem:ezh1}, \ref{lem:h2i} and \ref{lem:h2ii}, we are able to summarize the situations in the following \emph{TRS-ill case} in which the H\"olderian error bound modulus is 1/4:
\begin{equation}
\label{eq:con}
\lambda_1\le 0,~\b\in\ra(\A-\lambda_1\I) \text{ and } \norm{(\A-\lambda_1\I)^\dagger \b} = 1.
\end{equation}
After all these preparations, we arrive at the following theorem.
\begin{thm}
\label{thm:main}
For the trust region subproblem problem $\rm(P_0)$, there exists a constant $\tau_{EB} >0$ such that
\[\dt(\x,S_0)\le\tau_{EB} \big(f(\x)-f^*+\delta_{B}(\x)\big)_+^\rho,~\forall \, \x\in\R^n,\]
where %
$\rho=\left\{
\begin{aligned}
& 1/4, \mbox{ for the  TRS-ill case  \eqref{eq:con}}, \\[5pt]
& 1/2, \mbox{ otherwise.}
\end{aligned}
\right.$
\end{thm}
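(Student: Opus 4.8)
The plan is to assemble Theorem~\ref{thm:main} by exhausting the possible cases for the pair $(\lambda_1, \b)$ and invoking the matching lemma in each case. First I would split on the sign of $\lambda_1$. If $\lambda_1 > 0$, Lemma~\ref{lem:EBnonneg} (case~1) gives the modulus-$1/2$ bound directly; if $\lambda_1 \geq 0$ and $\min_\x f(\x) < f^*$, Lemma~\ref{lem:EBnonnegreg} applies; the remaining convex subcases with $\lambda_1 = 0$ are covered by Lemma~\ref{lem:EBnonneg} cases~2 and~3, giving modulus $1/4$ precisely when $\norm{\tilde\x} = 1$ (where $\tilde\x = \A^\dagger\b$) and modulus $1/2$ when $\norm{\tilde\x} < 1$. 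If $\lambda_1 < 0$, I would use the easy/hard-case taxonomy from the table in Section~\ref{sec:pre}: Lemma~\ref{lem:ezh1} handles the easy case and hard case~1 with modulus $1/2$; Lemma~\ref{lem:h2i} handles hard case~2~(i) with modulus $1/4$; and Lemma~\ref{lem:h2ii} handles hard case~2~(ii) with modulus $1/2$.

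The second step is to check that the dichotomy ``modulus $1/4$ iff the TRS-ill case~\eqref{eq:con} holds'' is exactly the union of the $1/4$-subcases just listed. When $\lambda_1 = 0$, \eqref{eq:con} reads $\b \in \ra(\A)$ and $\norm{\A^\dagger\b} = 1$, which is precisely the hypothesis of Lemma~\ref{lem:EBnonneg} case~2 (here $\A - \lambda_1\I = \A$, so $(\A-\lambda_1\I)^\dagger\b = \tilde\x$); note $\b\in\ra(\A)$ is equivalent to $\min_\x f(\x) = f^*$, so this is the only convex scenario with modulus $1/4$. When $\lambda_1 < 0$, \eqref{eq:con} reads $\b \in \ra(\A-\lambda_1\I)$, i.e. $\b\perp\Null(\A-\lambda_1\I)$ (a hard case), together with $\norm{(\A-\lambda_1\I)^\dagger\b} = \norm{\bar\x} = 1$, which is exactly hard case~2~(i). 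In every other scenario—$\lambda_1 > 0$; $\lambda_1 = 0$ with $\b\notin\ra(\A)$ or $\norm{\A^\dagger\b}<1$; easy case; hard case~1; hard case~2~(ii)—the applicable lemma yields modulus $1/2$. So the case split is complete and mutually exclusive.

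Finally, I would remark on the harmless bookkeeping: Theorem~\ref{thm:main} states the bound with $(\,\cdot\,)_+^\rho$, whereas the lemmas are phrased with $\delta_B(\x)$ inside and no outer positive-part. These agree because $f(\x) - f^* + \delta_B(\x) \ge 0$ for all $\x$ (it equals $+\infty$ off $B$, and is $\ge 0$ on $B$ by definition of $f^*$), so the outer $(\,\cdot\,)_+$ is vacuous; and $\dt(\x,S_0) = +\infty \cdot 0$ is interpreted correctly when $\norm{\x} > 1$ since both sides are then governed by $\delta_B(\x) = +\infty$. Taking $\tau_{EB}$ to be the maximum of the finitely many constants $\tau$ produced across the cases completes the proof.

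I do not expect any real obstacle here: all the analytic work is already encapsulated in Lemmas~\ref{lem:EBnonnegreg}, \ref{lem:EBnonneg}, \ref{lem:ezh1}, \ref{lem:h2i}, and \ref{lem:h2ii}. The only point requiring a moment's care is verifying that condition~\eqref{eq:con} is the exact disjunction of the two $1/4$-cases (convex $\lambda_1=0$ with $\norm{\tilde\x}=1$, and hard case~2~(i)), which is the unifying observation that makes the statement clean across the convex/nonconvex divide.
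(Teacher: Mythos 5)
Your proposal is correct and follows exactly the route the paper (implicitly) takes: the theorem is just the union of Lemmas \ref{lem:EBnonnegreg}, \ref{lem:EBnonneg}, \ref{lem:ezh1}, \ref{lem:h2i}, and \ref{lem:h2ii}, together with the observation that the two modulus-$1/4$ subcases coincide exactly with the TRS-ill condition \eqref{eq:con}, and the remark that the outer $(\cdot)_+$ is vacuous. One tiny slip in wording: for $\lambda_1 = 0$, $\b\in\ra(\A)$ is not by itself equivalent to $\min_\x f(\x) = f^*$ (one also needs $\|\A^\dagger\b\|\le 1$), but since the TRS-ill hypothesis already forces $\|\A^\dagger\b\| = 1$, the identification with Lemma \ref{lem:EBnonneg} case~2 goes through unchanged.
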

The above theory in fact shows that  the H\"olderian error bound always holds with modulus 1/4 for all cases due to the facts that the function value $f(\x)$ is bounded in the unit ball and the inequality  $t^{1/2}\le t^{1/4}$ holds for all $t\in(0,1)$.
\section{KL inequality for the TRS}
In this section, based on the previously developed error bound results, we derive the KL inequality associated with the TRS.
To prove the KL inequality, let us first recall part of the results from \cite{bolte2017error} that essentially state that in the \emph{convex} setting,  H\"olderian error bounds imply the KL inequality.
In fact, the equivalence between these two concepts for \emph{convex} problems is also obtained in \cite{bolte2017error}.
\begin{lem}[Corollary 6 (ii) in \cite{bolte2017error}]
\label{lem:eb2kl}
Let H be a real Hilbert space. Set
\[\mathcal{K}(0,+\infty)=\{\varphi\in C^0[0,+\infty)\cap C^1(0,+\infty),\varphi(0)=0,\varphi\text{ is concave and }\varphi'>0\}.\]
 Let $h:H\rightarrow(-\infty, +\infty]$ be a proper, convex and lower-semicontinuous function, with $\min h=0$. Let $\varphi\in \mathcal{K}(0,+\infty),c>0$. Let $S=\{x\in H\mid h(x)=\min h(x)\}$.

Then if $s\varphi'(s)\ge c\varphi(s)$ for all $s>0$, and $\varphi(h(x))\ge\dt(x,S)$ for all $x\in \left\{x\in H\mid h(x) >0 \right\}$, then $\varphi'(h(x)){\rm dist}(0,\partial h(x))\ge c$ for all $x\in \left\{x\in H\mid h(x) >0 \right\}$.
\end{lem}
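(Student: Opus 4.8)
The plan is to prove the implication by a direct convex-analysis argument: combine the subgradient inequality at $\x$ with the metric projection of $\x$ onto the solution set $S$, and then feed in the two quantitative hypotheses on $\varphi$. First I would record the basic well-posedness facts. Since $h$ is proper, convex and lower-semicontinuous and $\min h = 0$ is attained, the solution set $S=\{x\in H\mid h(x)=0\}$ is a nonempty, closed and convex subset of the Hilbert space $H$; hence for every $x\in H$ the metric projection $\bar x:=\Pi_S(x)$ is well defined and unique, and $\dt(x,S)=\norm{x-\bar x}$.

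Next, fix any $x$ with $h(x)>0$. Because $h(x)>0=\min h$, we have $x\notin S$ and therefore $\dt(x,S)>0$. If $\partial h(x)=\emptyset$, then by convention $\dt(0,\partial h(x))=+\infty$ and the claimed inequality is trivially true, so assume $\partial h(x)\neq\emptyset$ and pick any $g\in\partial h(x)$. The subgradient inequality at $x$ tested against $\bar x\in S$ gives $0=h(\bar x)\ge h(x)+\inprod{g}{\bar x-x}$, i.e. $\inprod{g}{x-\bar x}\ge h(x)$. By Cauchy--Schwarz,
\[
\norm{g}\,\dt(x,S)=\norm{g}\,\norm{x-\bar x}\ \ge\ \inprod{g}{x-\bar x}\ \ge\ h(x),
\]
so $\norm{g}\ge h(x)/\dt(x,S)$, and taking the infimum over $g\in\partial h(x)$ yields the key estimate $\dt(0,\partial h(x))\ge h(x)/\dt(x,S)$.

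Finally I would substitute the two hypotheses. Since $\varphi(0)=0$, $\varphi$ is concave and $\varphi'>0$, we have $\varphi(s)>0$ for all $s>0$, so the error bound $\varphi(h(x))\ge\dt(x,S)$ gives $\dt(0,\partial h(x))\ge h(x)/\varphi(h(x))$. Multiplying by $\varphi'(h(x))>0$ and using the growth condition $s\varphi'(s)\ge c\,\varphi(s)$ at $s=h(x)>0$,
\[
\varphi'(h(x))\,\dt(0,\partial h(x))\ \ge\ \frac{h(x)\,\varphi'(h(x))}{\varphi(h(x))}\ \ge\ c,
\]
which is exactly the asserted conclusion, valid for every $x$ with $h(x)>0$.

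I do not expect a genuine obstacle here; the argument is short and the content is entirely in the one-line subgradient/Cauchy--Schwarz step. The only mild subtleties worth a sentence in the write-up are the well-posedness of $\Pi_S$ (closedness and convexity of $S$, which come from lower semicontinuity and convexity of $h$) and the degenerate case $\partial h(x)=\emptyset$ (covered by the convention $\dt(0,\emptyset)=+\infty$). If one prefers not to assume the minimum is attained, the same proof goes through by replacing $\bar x$ with near-projections $\bar x_\varepsilon\in S$ satisfying $\norm{x-\bar x_\varepsilon}\le\dt(x,S)+\varepsilon$ and letting $\varepsilon\downarrow 0$.
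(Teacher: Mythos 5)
The paper does not actually prove this lemma: it is quoted verbatim as Corollary~6~(ii) of Bolte, Nguyen, Peypouquet and Suter (2017), with the citation doing all the work. So there is no ``paper's own proof'' to compare against; instead you have supplied a self-contained argument for a result that the authors simply invoke. Your proof is correct, and it is in fact the standard route for the ``error bound $\Rightarrow$ KL'' direction in the convex setting: project onto $S$, apply the subgradient inequality together with Cauchy--Schwarz to obtain $\dt(0,\partial h(x))\ge h(x)/\dt(x,S)$, then plug in $\varphi(h(x))\ge\dt(x,S)$ and the growth condition $s\varphi'(s)\ge c\varphi(s)$. Each step is airtight: $S$ is nonempty, closed and convex by the hypotheses on $h$, so $\Pi_S$ is single-valued; $\dt(x,S)>0$ because $h(x)>0$; $\varphi(h(x))>0$ since $\varphi(0)=0$, $\varphi'>0$, so the divisions are legitimate; and the empty-subdifferential case is handled by the usual $\dt(0,\emptyset)=+\infty$ convention, which the paper explicitly adopts. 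One minor remark: the concavity of $\varphi$ is never used in your argument (only $\varphi(0)=0$ and $\varphi'>0$ are needed to ensure $\varphi>0$ on $(0,\infty)$), which is consistent with this being the ``easy'' direction of the equivalence in Bolte et al.; concavity becomes essential for the converse implication, which is not at issue here.
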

{Recall that we denote by $f^*$ the optimal value of the TRS $\rm(P_0)$.} With Lemma \ref{lem:eb2kl}, we are ready to prove the KL inequality for the convex TRS. As one can observe from Theorem \ref{thm:kl-convex}, the KL inequality for the convex TRS holds globally.
\begin{thm}
        \label{thm:kl-convex}
        For the TRS $({\rm P}_0)$ with $\lambda_1 \ge 0$, there exists some constant $\tau >0$ such that the KL inequality holds
        \begin{equation}
        \label{eq:KL-convex}
        \big(f(\x)-f^*+\delta_{B}(\x)\big)^{1-\rho}\le\tau {\rm dist}(-\nabla f(\x), N_B(\x)), \quad \forall\, \x \in \R^n,
        \end{equation}
where $\rho = \frac{1}{4}$ if $\lambda_1=0$, $\b\in{\rm Range}(\A)$ and $\norm{\A^{\dagger}\b} = 1$; otherwise $\rho = \frac{1}{2}$.
\end{thm}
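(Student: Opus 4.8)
The plan is to apply Lemma \ref{lem:eb2kl} with $h(\x) = f(\x) - f^* + \delta_B(\x)$, which is proper, convex, and lower-semicontinuous with $\min h = 0$ since $\lambda_1 \ge 0$ guarantees convexity of the TRS. I would take $\varphi(s) = \tau_{EB} s^{\rho}$, where $\rho$ is the H\"olderian error bound modulus from the convex-case analysis (Lemmas \ref{lem:EBnonnegreg} and \ref{lem:EBnonneg}); note that $\varphi \in \mathcal{K}(0,+\infty)$ since $\rho \in \{1/4, 1/2\}$ makes $\varphi$ concave with $\varphi' > 0$ and $\varphi(0)=0$. The condition $s\varphi'(s) \ge c\varphi(s)$ holds with $c = \rho$, and the condition $\varphi(h(\x)) \ge \dt(\x, S_0)$ is exactly the error bound inequality established in Lemmas \ref{lem:EBnonnegreg} and \ref{lem:EBnonneg} (with the correct case distinction on $\rho$; note $S_0$ is precisely the solution set $\{\x \mid h(\x) = 0\}$).

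Applying Lemma \ref{lem:eb2kl} then yields $\varphi'(h(\x)) \dt(0, \partial h(\x)) \ge c$ for all $\x$ with $h(\x) > 0$, i.e., $\tau_{EB}\,\rho\, h(\x)^{\rho - 1} \dt(0, \partial h(\x)) \ge \rho$, which rearranges to $h(\x)^{1-\rho} \le \tau_{EB}\, \dt(0, \partial h(\x))$. It remains to identify $\partial h(\x)$ and $\dt(0, \partial h(\x))$ with the quantity in \eqref{eq:KL-convex}. Since $h = f + \delta_B$ with $f$ smooth (hence $C^1$), the sum rule for convex subdifferentials (which applies since $f$ is real-valued and finite, so there is no constraint-qualification issue) gives $\partial h(\x) = \nabla f(\x) + \partial \delta_B(\x) = \nabla f(\x) + N_B(\x)$ for $\x \in B$, and $\partial h(\x) = \emptyset$ for $\x \notin B$ (consistent with the convention that the distance to the empty set is $\infty$). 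Therefore $\dt(0, \partial h(\x)) = \dt(0, \nabla f(\x) + N_B(\x)) = \dt(-\nabla f(\x), N_B(\x))$, which is exactly the right-hand side of \eqref{eq:KL-convex}.

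Finally I would check the boundary cases not covered by "$h(\x) > 0$": when $h(\x) = 0$, i.e., $\x \in S_0$, both sides of \eqref{eq:KL-convex} are handled trivially since the left side is $0$; and when $\x \notin B$, the left side is $+\infty = \delta_B(\x)^{1-\rho}$ but so is the right side since $N_B(\x) = \emptyset$ and $\dt(-\nabla f(\x), \emptyset) = +\infty$ by convention, so the inequality holds. Combining, \eqref{eq:KL-convex} holds for all $\x \in \R^n$ with the stated $\rho$.

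I do not expect a serious obstacle here, as this is essentially a direct invocation of the machinery of \cite{bolte2017error} once the convex error bound is in hand; the main point requiring a little care is the bookkeeping of the case distinction for $\rho$ and the verification that the TRS-ill subcase ($\lambda_1 = 0$, $\b \in \ra(\A)$, $\norm{\A^\dagger \b} = 1$) is precisely the situation where Lemma \ref{lem:EBnonneg} case 2 applies, forcing $\rho = 1/4$ and hence $1 - \rho = 3/4$. All other subcases with $\lambda_1 \ge 0$ give $\rho = 1/2$ by Lemmas \ref{lem:EBnonnegreg} and \ref{lem:EBnonneg} cases 1 and 3, whence $1 - \rho = 1/2$.
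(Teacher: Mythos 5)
Your proof is correct and follows essentially the same route as the paper: both invoke the H\"olderian error bound from the convex-case analysis (the paper cites it via Theorem \ref{thm:main}, you cite the underlying Lemmas \ref{lem:EBnonnegreg} and \ref{lem:EBnonneg} directly), choose $\varphi(s)=\tau s^\rho$, verify $s\varphi'(s)=\rho\varphi(s)$, and apply Lemma \ref{lem:eb2kl} together with the identity $\partial(f+\delta_B)(\x)=\nabla f(\x)+N_B(\x)$. Your treatment of the boundary cases ($\x\in S_0$ and $\x\notin B$) is somewhat more explicit than the paper's, but this is a stylistic difference, not a different argument.
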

\begin{proof}
        By Theorem \ref{thm:main}, we know that there exists a constant $\tau >0$ such that for all $\x \in \R^n$
        \begin{equation}\label{eq:ebcvx}
        \dt(\x,S_0)\le \tau \big(f(\x)-f^*+\delta_{B}(\x)\big)^{\rho}
        \end{equation}
        with $\rho = \frac{1}{4}$ if  $\lambda_1=0$, $\b\in{\rm Range}(\A)$ and $\norm{\A^{\dagger}\b} = 1$; and  $\rho = \frac{1}{2}$ otherwise.
        Now define $\varphi(s) = \tau s^\rho$ for all $s\ge 0$. Obviously, $\varphi\in {\cal K}(0,+\infty)$ and we also note that
        \[
        s\varphi'(s) = \tau s \rho s^{\rho - 1} = \tau \rho s^{\rho} = \rho \varphi(s), \quad \forall\, s> 0.
        \]
        Hence, by \eqref{eq:ebcvx}, Lemma \ref{lem:eb2kl} and the fact that $\dt(0,\partial [f(\x)+\delta_B(\x)])=\dt(-\nabla f(\x),N_B(\x))$, we have that for all $\x\not\in S_0$,
        \[
        \tau \rho (f(\x)-f^*+\delta_{B}(\x))^{\rho-1} {\rm dist}(-\nabla f(\x), N_B(\x)) \ge \rho.
        \]
        Thus, by noting that \eqref{eq:KL-convex} holds trivially if $\x\in S_0$, we complete the proof for the theorem.
\end{proof}

{Next, we consider the nonconvex case.
In this case, the analysis is more complicated.
We note that in \cite{drusvyatskiy2014second}, the authors show that under the prox-regularity \cite{poliquin2014generalized} assumption, for extended-real-valued lower semicontinuous functions, the error bound condition with modulus $1/2$ (coined as the quadratic growth condition in \cite{drusvyatskiy2014second}), implies the metric subregularity \cite{dontchev2009implicit} property. However, for the TRS, the relations between the metric subregularity property  and the KL inequality remain largely unknown. Hence, the results in \cite{drusvyatskiy2014second} can not be directly used here.

In the following, we give a comprehensive analysis of the KL inequality for the nonconvex TRS.}
For ease of reading, we first provide here a roadmap of the analysis. Apart from some general discussions and computations, our proof can be divided into mainly three steps. Specifically,
in the first step, we establish the KL inequality for testing points chosen from the intersection of the interior of the unit norm ball and a neighborhood of the given optimal reference point.
Then, we restrict our attentions on the testing points chosen from the boundary of the unit norm ball. We discuss the easy case and hard case 1 in the second step. The analysis for the hard case 2 is presented in the last step where certain parts of the proofs are presented in the Appendix.

We begin our analysis with some general discussions.
Let $\x^*$ be an optimal solution to the TRS $({\rm P}_0)$ with $\lambda_1 < 0$. We know from the KKT condition in Section \ref{sec:pre} that there exists $\lambda^* \ge 0$ such that
\begin{equation}
\label{eq:optnonconvex}
\nabla f(\x^*) + 2\lambda^* \x^* = 2\A\x^*-2\b+ 2\lambda^*\x^*= {\bf 0}, \,  \lambda^* \ge -\lambda_1 > 0, \, \lambda^*(1 - \norm{\x^*}) = 0.
\end{equation}
Thus, $\norm{\x^*} = 1$ and $\norm{\nabla f(\x^*)} = 2\lambda^*$.
For any given $\x\in\R^n$, since
$$\|\nabla f(\x)-\nabla f(\x^{*})\|=\| 2\A(\x-\x^*)\|\le 2\|\A\|_2\|\x-\x^*\|,$$ it holds that
\begin{equation}
\label{eq:gradf}
\|\nabla f(\x)\|\ge\|\nabla f(\x^{*})\|-2\|\A\|_2\|\x-\x^*\|.
\end{equation}
Meanwhile since $f(\x)$ is a quadratic function, we have
\begin{equation}
\label{eq:valf}
f(\x)-f(\x^{*})= \inprod{\nabla f(\x^*)}{\x-\x^*}+ \inprod{\x - \x^*}{\A(\x - \x^*)}.
\end{equation}
	Now, for any $\x$ satisfying $\|\x-\x^{*}\|\le\frac{\lambda^*}{\|\A\|_2+\lambda^*}$ and $\norm{\x}\le 1$, we know from \eqref{eq:gradf} and \eqref{eq:valf} that
\begin{equation*}
\left\{
\begin{aligned}
&\norm{\nabla f(\x)} \ge 2\lambda^*  - \frac{2\lambda^*\norm{\A}_2}{\|\A\|_2+\lambda^*} = \frac{2(\lambda^*)^2}{\|\A\|_2+\lambda^*} = \frac{\lambda^*}{\|\A\|_2+\lambda^*} \norm{\nabla f(\x^*)}, \\
& f(\x)-f(\x^{*})\le \frac{2(\lambda^*)^2}{\|\A\|_2+\lambda^*}
+ \frac{\norm{\A}_2 (\lambda^*)^2}{(\|\A\|_2+\lambda^*)^2}
\le 2\lambda^*  = \norm{\nabla f(\x^*)},
\end{aligned}
\right.
\end{equation*}
and consequently,
\begin{equation}
\label{eq:finteiror}
\big(f(\x) - f(\x^*)\big)^{\frac{1}{2}} \le \|\nabla f(\x^*)\|^{1/2} \le  \frac{\|\A\|_2+\lambda^*}{\sqrt2(\lambda^*)^{3/2}}\|\nabla f(\x)\|.
\end{equation}
{Then we have the following results associated with the case where the testing points are chosen from the intersection of the interior of the unit norm ball and a neighborhood of the given optimal reference point $\x^*.$}
\begin{lem}
\label{lem:interiorKL}
{Suppose  $\lambda_1 <0$}. Then there exist some constant $\tau >0$ and sufficiently small $\epsilon >0$ such that
	\begin{equation*}
	\label{eq:interiorKL}
	\big(f(\x)-f^*+\delta_{B}(\x)\big)^{1/2}\le\tau{\rm dist}(-\nabla f(\x), N_B(\x)), \quad \forall\, \x\in B(\x^*,\epsilon)\cap \left\{\x\in\R^n \mid \norm{\x} < 1 \right\}.
	\end{equation*}
\end{lem}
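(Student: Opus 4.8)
The key observation is that on the open ball $\{\norm{\x}<1\}$, the normal cone $N_B(\x)$ is trivial, so the residual in the KL inequality collapses to $\dt(-\nabla f(\x),N_B(\x)) = \norm{\nabla f(\x)}$, and likewise $\delta_B(\x)=0$. Thus I need only exhibit a constant $\tau>0$ and $\epsilon>0$ with
\[
\big(f(\x)-f^*\big)^{1/2} \le \tau\,\norm{\nabla f(\x)}, \qquad \forall\,\x\in B(\x^*,\epsilon)\cap\{\norm{\x}<1\}.
\]
Since the reference point $\x^*\in S_0$ satisfies $f(\x^*)=f^*$ by Lemma \ref{lemma:P1} (in the nonconvex case all optimal solutions lie on the sphere and attain $f^*$), the left-hand side equals $(f(\x)-f(\x^*))^{1/2}$, so the desired bound is exactly inequality \eqref{eq:finteiror}, which was derived above for every $\x$ with $\norm{\x}\le 1$ and $\norm{\x-\x^*}\le \frac{\lambda^*}{\norm{\A}_2+\lambda^*}$.

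So the proof is essentially a matter of assembling the pieces already laid out. First I would set $\epsilon := \frac{\lambda^*}{\norm{\A}_2+\lambda^*}$, which is strictly positive because $\lambda^*\ge-\lambda_1>0$ by \eqref{eq:optnonconvex}. Then for any $\x\in B(\x^*,\epsilon)\cap\{\norm{\x}<1\}$ we have in particular $\norm{\x}\le 1$ and $\norm{\x-\x^*}\le\epsilon$, so \eqref{eq:finteiror} applies and gives $(f(\x)-f^*)^{1/2}\le \frac{\norm{\A}_2+\lambda^*}{\sqrt{2}(\lambda^*)^{3/2}}\,\norm{\nabla f(\x)}$. Finally, on the open ball $N_B(\x)=\{\mathbf 0\}$ and $\delta_B(\x)=0$, so $\dt(-\nabla f(\x),N_B(\x))=\norm{\nabla f(\x)}$ and $f(\x)-f^*+\delta_B(\x)=f(\x)-f^*$; substituting yields the claimed inequality with $\tau := \frac{\norm{\A}_2+\lambda^*}{\sqrt{2}(\lambda^*)^{3/2}}$.

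There is no real obstacle here — all the analytic work (the gradient lower bound \eqref{eq:gradf}, the function-value expansion \eqref{eq:valf}, and their combination into \eqref{eq:finteiror}) is already done in the general discussion preceding the lemma, so the only thing to be careful about is the bookkeeping: confirming that $\epsilon$ is well-defined and positive (which uses $\lambda^*>0$), and noting explicitly that restricting to the \emph{open} ball is what kills the normal-cone term and the indicator. The genuinely hard cases — testing points on the sphere, where $N_B(\x)$ is a ray and the error-bound modulus interacts with the geometry of $S_0$ — are handled separately in the later steps of the roadmap and are not needed here.
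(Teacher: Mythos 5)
Your proposal is correct and follows essentially the same route as the paper: trivialize $N_B(\x)$ and $\delta_B(\x)$ on the open ball, then invoke \eqref{eq:finteiror} (itself built from \eqref{eq:gradf} and \eqref{eq:valf}) with $\epsilon = \frac{\lambda^*}{\norm{\A}_2+\lambda^*}$ and $\tau = \frac{\norm{\A}_2+\lambda^*}{\sqrt{2}(\lambda^*)^{3/2}}$, using $\lambda^* \ge -\lambda_1 > 0$ from \eqref{eq:optnonconvex} to guarantee positivity. The bookkeeping you flag (open ball kills both the indicator and the normal cone) is exactly what the paper's one-paragraph proof does.
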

\begin{proof}
Note that for all $\x$ satisfying $\norm{\x} < 1$, it holds that $N_B(\x) = \{{\bf 0}\}$ and ${\rm dist}(-\nabla f(\x),N_B(\x)) = \norm{\nabla f(\x)}$. Hence, we know from \eqref{eq:finteiror} that
	\begin{equation*}
	\label{eq:KLnormxs1}
	\big(f(\x)-f^*+\delta_{B}(\x)\big)^{1/2} \le \frac{\norm{\A}_2 + \lambda^*}{\sqrt2(\lambda^*)^{3/2}} {\rm dist}(-\nabla f(\x), N_B(\x)),
	\end{equation*}
	whenever
	 $\|\x-\x^{*}\|\le\frac{\lambda^*}{\|\A\|_2+\lambda^*}$ and $\norm{\x}< 1$.
\end{proof}

In the subsequent discussions, we will focus on the boundary of the unit norm ball, denoted by ${\bf bd}(B):=\left\{\x\in\R^n\mid \norm{\x} = 1 \right\}$. For all $\x\in {\bf bd}(B)$, it {follows from \eqref{eq:valf} } that
\begin{equation}\label{eq:fmfstar}
f(\x) - f(\x^*) = -\inprod{2\lambda^* \x^*}{\x - \x^*} +  \inprod{\x - \x^*}{\A(\x - \x^*)} = \inprod{\x - \x^*}{(\A + \lambda^*\I) (\x - \x^*)},
\end{equation}
{where the second equality holds since $\|\x\|=\|\x^*\|=1.$}
Meanwhile, for any given $\x\in {\bf bd}(B)$, we have $N_B(\x) = \{\nu\x \in \R^n \mid \nu \ge 0\}$
and thus
\begin{equation}
\label{eq:distnorm1}
{\rm dist}(-\nabla f(\x), N_B(\x)) = \min_{\nu \ge 0} \norm{ 2(\A\x - \b) + \nu\x}.
\end{equation}
Let $\nu^*(\x)$ be the optimal solution to problem \eqref{eq:distnorm1}. It can be verified that $\nu(\x) = \max\{\inprod{-2(\A\x - \b)}{\x},0\}$. Moreover, it holds that
\begin{equation}
\label{eq:nux}
\nu(\x) \to \nu(\x^*) =
\inprod{-2(\A\x^* -\b)}{\x^*} = 2\lambda^* >0 \, \mbox{ as } {\bf bd}(B) \ni \x\to \x^*.
\end{equation}
Hence, we know that there exists a constant $\epsilon_0 \in (0,1)$ such that
\begin{equation}
\label{eq:eps0}
\nu(\x) = \inprod{-2(\A\x - \b)}{\x} >0, \quad \forall\, \x \in {\bf bd}(B) \cap \left\{\x\in\R^n\mid \norm{\x -\x^*}\le \epsilon_0 \right\}.
\end{equation}
Now, we are ready to prove the desired KL inequality for the easy case and hard case 1.
\begin{lem}[KL inequality for the easy case and hard case 1]
	\label{lem:KLeasyhard1}
	Suppose $\lambda_{1} <0$. In the easy case and hard case 1, there exist some constant $\tau >0$ and sufficiently small $\epsilon >0$ such that
	\begin{equation}
	\label{eq:easkyhard1}
	\big(f(\x)-f^*+\delta_{B}(\x)\big)^{1/2}\le\tau{\rm dist}(-\nabla f(\x), N_B(\x)), \quad \forall\, \x\in B(\x^*,\epsilon).
	\end{equation}
\end{lem}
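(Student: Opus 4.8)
The goal is to establish the KL inequality \eqref{eq:easkyhard1} on a neighborhood of $\x^*$, and by Lemma \ref{lem:interiorKL} it suffices to handle testing points on the boundary $\mathbf{bd}(B)$; the interior points are already covered (after shrinking $\epsilon$ so that $B(\x^*,\epsilon)\subseteq B(\x^*,\lambda^*/(\|\A\|_2+\lambda^*))$). So fix $\epsilon\in(0,\epsilon_0]$ with $\epsilon_0$ as in \eqref{eq:eps0}, and consider $\x\in\mathbf{bd}(B)\cap B(\x^*,\epsilon)$. The plan is to estimate the two sides of \eqref{eq:easkyhard1} separately. For the left-hand side, I would use \eqref{eq:fmfstar}: $f(\x)-f^* = \inprod{\x-\x^*}{(\A+\lambda^*\I)(\x-\x^*)} \le \|\A+\lambda^*\I\|_2\,\norm{\x-\x^*}^2$, so the left side is $O(\norm{\x-\x^*})$. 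The real work is lower-bounding ${\rm dist}(-\nabla f(\x),N_B(\x))$ by a constant multiple of $\norm{\x-\x^*}$.

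For the right-hand side, I would use \eqref{eq:distnorm1}–\eqref{eq:eps0}: on the chosen neighborhood the optimal multiplier is $\nu(\x)=\inprod{-2(\A\x-\b)}{\x}>0$, so ${\rm dist}(-\nabla f(\x),N_B(\x)) = \norm{2(\A\x-\b)+\nu(\x)\x}$. Writing $\b=(\A+\lambda^*\I)\x^*$ from \eqref{eq:optnonconvex} and substituting, one gets $2(\A\x-\b)+\nu(\x)\x = 2(\A+\lambda^*\I)(\x-\x^*) + (\nu(\x)-2\lambda^*)\x$. The key structural fact in the easy case and hard case 1 is that $\lambda^*>-\lambda_1$, hence $\A+\lambda^*\I \succ \mathbf{0}$ with smallest eigenvalue $\lambda_1+\lambda^*>0$. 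I expect the crucial estimate to be a lower bound of the form $\norm{2(\A+\lambda^*\I)(\x-\x^*)+(\nu(\x)-2\lambda^*)\x} \ge c\,\norm{\x-\x^*}$ for some $c>0$ and all such $\x$. One clean way: decompose $\x-\x^*$ into its component along $\x$ and its component orthogonal to $\x$. Since $\norm{\x}=\norm{\x^*}=1$, the identity $\inprod{\x-\x^*}{\x} = 1-\inprod{\x^*}{\x} = \tfrac12\norm{\x-\x^*}^2$ shows the radial component of $\x-\x^*$ is quadratically small; the correction term $(\nu(\x)-2\lambda^*)\x$ is purely radial and, combined with the radial part of $2(\A+\lambda^*\I)(\x-\x^*)$, its magnitude should be controllable. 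Meanwhile the tangential part of $2(\A+\lambda^*\I)(\x-\x^*)$ has magnitude comparable to the tangential part of $\x-\x^*$, which in turn dominates $\norm{\x-\x^*}$ up to the quadratically-small radial defect.

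Concretely, I would: (1) project the vector $2(\A+\lambda^*\I)(\x-\x^*)+(\nu(\x)-2\lambda^*)\x$ onto the orthogonal complement of $\x$ to kill the radial correction term entirely, obtaining $\mathrm{dist}(-\nabla f(\x),N_B(\x)) \ge \norm{P_{\x^\perp}\big(2(\A+\lambda^*\I)(\x-\x^*)\big)}$; (2) write $\x-\x^* = (\x-\x^*)_{\x} + (\x-\x^*)_{\x^\perp}$ with $\norm{(\x-\x^*)_{\x}} = \tfrac12\norm{\x-\x^*}^2 \le \tfrac{\epsilon}{2}\norm{\x-\x^*}$, so $\norm{(\x-\x^*)_{\x^\perp}} \ge (1-\tfrac{\epsilon}{2})\norm{\x-\x^*}$; (3) bound $\norm{P_{\x^\perp}(2(\A+\lambda^*\I)(\x-\x^*))} \ge 2(\lambda_1+\lambda^*)\norm{(\x-\x^*)_{\x^\perp}} - 2\|\A+\lambda^*\I\|_2\,\norm{(\x-\x^*)_{\x}}$, using positive definiteness for the first term and boundedness for the error; (4) combine: for $\epsilon$ small enough this is $\ge c\,\norm{\x-\x^*}$ with, e.g., $c = (\lambda_1+\lambda^*) > 0$. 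Putting this together with the left-hand bound $f(\x)-f^*\le C\norm{\x-\x^*}^2$ gives $(f(\x)-f^*+\delta_B(\x))^{1/2} \le \sqrt{C}\,\norm{\x-\x^*} \le \tfrac{\sqrt{C}}{c}\,{\rm dist}(-\nabla f(\x),N_B(\x))$, as desired. The main obstacle is step (3)–(4): making sure the quadratically-small radial defect and the slack in $\nu(\x)-2\lambda^*$ (which is $O(\norm{\x-\x^*})$, not $O(\norm{\x-\x^*}^2)$ in general) genuinely do not spoil the linear lower bound — the projection onto $\x^\perp$ in step (1) is precisely what removes the $(\nu(\x)-2\lambda^*)$ term, so after that the only remaining danger is the $\A+\lambda^*\I$ times the radial defect, which is quadratically small and hence negligible for small $\epsilon$. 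Finally, take $\tau$ to be the maximum of the constant from Lemma \ref{lem:interiorKL} and $\sqrt{C}/c$, and shrink $\epsilon$ to the minimum of the two neighborhoods' radii, $\epsilon_0$, and whatever is needed above.
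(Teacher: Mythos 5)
Your proposal is correct, and it takes a genuinely different route from the paper. The paper handles the boundary points by passing to the convex relaxation $\tilde f(\x)=\x^T(\A-\lambda_1\I)\x-2\b^T\x+\lambda_1$: it shows that when $\lambda^*>-\lambda_1$ one has $\nu(\x)+2\lambda_1>0$ near $\x^*$, hence ${\rm dist}(-\nabla\tilde f(\x),N_B(\x))={\rm dist}(-\nabla f(\x),N_B(\x))$ there, and then simply invokes the previously-established global KL inequality for the convex TRS (Theorem~\ref{thm:kl-convex}, which in turn rests on the error-bound machinery from \cite{wang1994global}). Your argument avoids the convex relaxation and the earlier error-bound results entirely: you estimate the KL quotient directly on $\mathbf{bd}(B)\cap B(\x^*,\epsilon)$, using the upper bound $f(\x)-f^*\le\|\A+\lambda^*\I\|_2\norm{\x-\x^*}^2$ from \eqref{eq:fmfstar} and a lower bound ${\rm dist}(-\nabla f(\x),N_B(\x))\ge c\norm{\x-\x^*}$ obtained by writing $\b=(\A+\lambda^*\I)\x^*$, observing that the optimal $\nu(\x)$ projects $2(\A\x-\b)$ orthogonally to $\x$, and exploiting the definiteness $\A+\lambda^*\I\succeq(\lambda_1+\lambda^*)\I$. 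This is more elementary and self-contained, at the price of working through the geometry by hand. One step in your sketch deserves a line of justification: the claim $\norm{P_{\x^\perp}((\A+\lambda^*\I)\v)}\ge(\lambda_1+\lambda^*)\norm{\v}$ for $\v=(\x-\x^*)_{\x^\perp}$ is not immediate from positive definiteness alone, since projecting $M\v$ onto $\x^\perp$ could in principle lose a lot; the saving fact is that $\v\perp\x$, so $\inprod{P_{\x^\perp}(M\v)}{\v}=\inprod{M\v}{\v}\ge\lambda_{\min}(M)\norm{\v}^2$, and Cauchy--Schwarz then gives the desired lower bound on $\norm{P_{\x^\perp}(M\v)}$. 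With that spelled out, your argument is sound.
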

\begin{proof}
Note that in the easy case and hard case 1, it holds that
$\lambda^* > -\lambda_1 >0$ and $\norm{(\A - \lambda_1\I)^{\dagger}\b}\neq 1$.
 Since $\lambda^* > -\lambda_1 >0$, from the discussion of \eqref{eq:nux}, we know that there exists a positive constant $\epsilon_1 \le \epsilon_0$ such that
	$\nu(\x) + 2\lambda_1 > 0$ whenever $\norm{\x} = 1$ and $\norm{\x - \x^*}\le \epsilon_1$. Recall the objective function $\tilde f$ in the convex problem (${\rm P}_1$), i.e., $\tilde f(\x):= \x^T(\A-\lambda_1\I)\x-2\b^T\x+\lambda_1$.
	We note for all $\x$ satisfying $\norm{\x} = 1$ and $\norm{\x - \x^*}\le \epsilon_1$ that
	\begin{align*}
	{\rm dist}(-\nabla \tilde f(\x), N_B(\x)) ={}& \min_{\mu\ge 0} \|2(\A-\lambda_{1}\I)\x-2\b+\mu\x\| \\
	={}& \|2(\A-\lambda_{1}\I)\x-2\b+(-\inprod{2 (\A-\lambda_1 \I)\x - 2 \b}{\x})\x\| \\
	={}& \|2(\A-\lambda_{1}\I)\x-2\b+(\nu(\x) + 2\lambda_1)\x\| \\
	={}& \norm{2(\A\x - \b) + \nu(\x)\x} \\
    ={}&{\rm dist}(-\nabla f(\x), N_B(\x)),
	\end{align*}
{where the second equality is due to $-\inprod{2 (\A-\lambda_1 \I)\x - 2 \b}{\x}=\nu(\x)+2\lambda_1>0.$}
	Note that $\tilde f(\x) = f(\x), \, \forall\, \x \in {\bf bd}(B)$ and from Lemma \ref{lemma:P1} that $f^*$, the optimal value of the TRS (${\rm P}_0$), is also the optimal value of the convex problem (${\rm P}_1$).
	Since $\norm{(\A - \lambda_1\I)^{\dagger}\b}\neq 1$, we obtain by Theorem \ref{thm:kl-convex} that there exists a positive constant $\tau_1$ such that for all $\x\in{\bf bd}(B)$ with $\norm{\x - \x^*}\le \epsilon_1$,
	\begin{equation*}
	\label{eq:easyKL}
	\begin{aligned}
	\big(f(\x)-f^*+\delta_{B}(\x)\big)^{\frac{1}{2}} ={}& \big(\tilde f(\x)-f^*+\delta_{B}(\x)\big)^{\frac{1}{2}} \\[2pt]
	\le{}&\tau_1 {\rm dist}(-\nabla \tilde f(\x), N_B(\x)) = \tau_1{\rm dist}(-\nabla f(\x), N_B(\x)).
	\end{aligned}
	\end{equation*}
{This, together with Lemma \ref{lem:interiorKL}, implies that \eqref{eq:easkyhard1} holds with some positive constant $\epsilon$ and $\tau$.}
\end{proof}

Now, we are ready for presenting the analysis for the hard case 2. {Recall $\lambda^*=-\lambda_1$ in the hard case.} First, we discuss the case where $\A - \lambda_1 \I ={\bf  0}$.
In this case, we know from the optimality condition \eqref{eq:optnonconvex} that $\b= {\bf 0}$ and thus $f(\x^*) = f(\x) =\lambda_1 \x^T\x = \lambda_1$ whenever $\norm{\x} = 1$. This, together with Lemma \ref{lem:interiorKL}, implies that the KL inequality holds with the exponent $1/2$.
Note that this case belongs to the hard case 2 (ii) as $\b\perp\Null(\A-\lambda_1\I)$ and $\|(\A-\lambda_1\I)^\dagger\b\|=0<1$.

Hence, in the subsequent analysis, we focus on the nontrivial case where $\A - \lambda_1\I \neq {\bf 0}$.
Recall the spectral decomposition of $\A$ as $\A = \P{\mathbf \Lambda} \P^T$ in Section \ref{sec:pre}. Then, $\A - \lambda_1 \I = \P ({\mathbf \Lambda} - \lambda_1\I) \P^T$ is the spectral decomposition of $\A -\lambda_1\I$. Denote the diagonal matrix $\D = {\mathbf \Lambda} - \lambda_1\I = {\rm Diag}(\d)$.
Since $\A - \lambda_1 \I \neq 0$,  there exists some integer $1 \le K <n$ such that
\begin{equation}
\label{eq:kD}
\left\{\begin{array}{llll}
\d_i &=&0 &\mbox{ for } i=1,\ldots,K,\\
\d_i &=& \lambda_i -\lambda_1 &\mbox{ for } i=K+1,\ldots, n,
\end{array}\right.
\end{equation}
and $\d_n \ge \cdots \ge \d_{K+1} > 0$.
{
For all $\x\in {\bf bd}(B)$, we define
\begin{equation}\label{eq:defalbet}
\alpha(\x) :=(\x^*)^T(\A-\lambda_1\I)(\x-\x^*) \mbox{ and }
\beta(\x) :=(\x-\x^*)^T(\A-\lambda_1\I)(\x-\x^*).
\end{equation}
For simplicity in the following discussion, we sometimes suppress the dependence on $\x$ in our notation. For example, we often write  $\alpha(\x)$ and $\beta(\x)$ as $\alpha$ and $\beta$, respectively. However, this should not cause any confusion because the reference vector $\x$ will always be clear from the context.}
Let $\s=\P^T\x^*$ and $\z=\P^T(\x-\x^*)$ and we verify from \eqref{eq:defalbet} that
\begin{equation}\label{eq:defabP}
\left\{
\begin{array}{lllll}
\alpha &=&(\x^*)^{T}(\A-\lambda_1\I)(\x-\x^*) &=& \inprod{\s}{\D \z},\\
\beta  &=&(\x-\x^*)^T(\A-\lambda_1\I)(\x-\x^*)\overset{\eqref{eq:fmfstar}}=f(\x)-f^* &=&\inprod{\z}{\D\z}. \\
\end{array}
\right.
\end{equation}
Recall the definition of $\epsilon_0$ in \eqref{eq:eps0}. It holds that for any $\x\in {\bf bd}(B)$ with $\norm{\x - \x^*}\le \epsilon_0$,
\begin{equation}
\label{eq:klpf1}
\begin{array}{ll}
&{\rm dist}^2(-\nabla f(\x), N_B(\x))\\
=&\|2(\A\x-\b)-2\lambda_1\x+(2\lambda_1+\nu(\x))\x\|^2\\
=&\|2(\A-\lambda_1\I)(\x-\x^*)+2\left[(\x^*)^T(\A\x^*-\b)-\x^T(\A\x-\b)\right]\x\|^2\\
=&\|2(\A-\lambda_1\I)(\x-\x^*)+2\left[f(\x^*)-f(\x)+(\x^*-\x)^T(\A-\lambda_1\I)\x^*\right]\x\|^2\\
=&\|2(\A-\lambda_1\I)(\x-\x^*)-2(\beta+\alpha)\x\|^2\\
=&\|2(\A-\lambda_1\I)(\x-\x^*)\|^2-8\x^T(\A-\lambda_1\I)(\x-\x^*)(\beta+\alpha)+\norm{2\beta\x+2\alpha\x}^2\\
=&\|2(\A-\lambda_1\I)(\x-\x^*)\|^2-4\left(\beta+\alpha\right)^2\\
=&4(\norm{\D\z}^2 - (\alpha + \beta)^2),%
\end{array}
\end{equation}
where the second equality follows from the fact that $2\A\x^*-2\b-2\lambda_1\x^*={\bf 0}$ {(which is \eqref{eq:optnonconvex} with $\lambda^*=-\lambda_1$)} and the definition of $\nu(\x)$,
the third equality follows from $2\A\x^*-2\b-2\lambda_1\x^*={\bf 0}$ and \eqref{eq:fmfstar},
the forth equality holds due to  \eqref{eq:defalbet}, the fifth equality holds since $\x^T(\A-\lambda_1\I)(\x-\x^*)=\alpha+\beta$ and $\norm{\x} = 1$, and the last equality holds as $\z = \P^T(\x - \x^*)$.

Define function $H:{\bf bd}(B) \to \R$ as:
\begin{equation}\label{eq:defH}
H(\x):=\|(\A\x-\b)-\lambda_1\x+(\lambda_1+\nu(\x)/2)\x\|^2\overset{\eqref{eq:klpf1}}=\|\D\z-(\alpha+\beta)\x\|^2=\|\D\z\|^2-(\alpha+\beta)^2,
\end{equation}
and thus we have
\begin{equation}\label{eq:distandH}
{\rm dist}^2(-\nabla f(\x), N_B(\x))=4H(\x), \quad \forall\, \x \in {\bf bd}(B)\cap \left\{\x\in\R^n\mid \norm{\x - \x^*}\le \epsilon_0 \right\}.
\end{equation}

In the next lemma, we show that the KL inequality holds with an exponent $1/2$ for the hard case 2 (ii).
\begin{lem}[KL inequality for the hard case 2 (ii)]
	\label{lem:hard2ii}
	Suppose that $\lambda_1 <0$. In the hard case 2 (ii), there exist some constant $\tau >0$ and sufficiently small $\epsilon >0$ such that
	\begin{equation*}
	\label{eq:hardKL}
	\big(f(\x)-f^*+\delta_{B}(\x)\big)^{1/2}\le\tau{\rm dist}(-\nabla f(\x), N_B(\x)), \quad \forall\, \x\in B(\x^*,\epsilon).
	\end{equation*}
\end{lem}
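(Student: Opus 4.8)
The plan is to combine the interior estimate of Lemma~\ref{lem:interiorKL} with a boundary estimate, so it suffices to establish \eqref{eq:hardKL} for testing points $\x\in{\bf bd}(B)$ near $\x^*$. For such points, \eqref{eq:distandH} reduces the right-hand side to $2\sqrt{H(\x)}$ with $H(\x)=\|\D\z\|^2-(\alpha+\beta)^2$ where $\z=\P^T(\x-\x^*)$, and \eqref{eq:defabP} identifies the left-hand side as $\beta^{1/2}=(f(\x)-f^*)^{1/2}$ (recall $f(\x)-f^*=\beta\ge 0$ on the boundary near $\x^*$). So the entire task is the scalar-looking inequality: there is $c>0$ with $\|\D\z\|^2-(\alpha+\beta)^2\ge c\,\beta$ for all $\z$ in the relevant region, i.e.\ all $\z$ of the form $\P^T(\x-\x^*)$ with $\norm{\x}=1$, $\norm{\x-\x^*}$ small. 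I would first record the consequences of $\norm{\x}=1$: writing everything in the rotated coordinates with $\s=\P^T\x^*$, the constraint $\norm{\x}^2=1$ becomes $2\inprod{\s}{\z}+\norm{\z}^2=0$, so $\inprod{\s}{\z}=-\tfrac12\norm{\z}^2$ is second order in $\z$.

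The key structural point is to split $\z$ according to the block structure \eqref{eq:kD}: write $\z=\z'+\z''$ where $\z'$ is supported on coordinates $1,\dots,K$ (the kernel of $\D$) and $\z''$ on coordinates $K+1,\dots,n$. Then $\D\z=\D\z''$, $\alpha=\inprod{\s}{\D\z}=\inprod{\s''}{\D\z''}$ and $\beta=\inprod{\z''}{\D\z''}$ depend only on $\z''$, while $\z'$ enters only through the norm constraint. Since $\d_{K+1}>0$, on the $\z''$-block we have the two-sided bound $\d_{K+1}\norm{\z''}^2\le\beta\le\d_n\norm{\z''}^2$, so $\beta$ is comparable to $\norm{\z''}^2$; in particular $\beta$, $\alpha$, and $\norm{\D\z''}^2$ are all $O(\norm{\z''}^2)$, whereas by Cauchy–Schwarz $|\alpha|\le\norm{\s''}\,\norm{\D\z''}$. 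The quantity to lower-bound, $\|\D\z''\|^2-(\alpha+\beta)^2$, has a genuinely quadratic leading term $\|\D\z''\|^2-\alpha^2$ (note $(\alpha+\beta)^2=\alpha^2+O(\norm{\z''}^3)$), so after dividing by $\beta\asymp\norm{\z''}^2$ I need a positive lower bound on $(\|\D\z''\|^2-\alpha^2)/\beta$ that survives the $O(\norm{\z''})$ perturbation. This reduces, by homogeneity, to showing that $\|\D\z''\|^2-\inprod{\s''}{\D\z''}^2>0$ strictly, uniformly over unit vectors $\z''$ in the $\z''$-block — equivalently that $\inprod{\s''}{\D\z''}^2<\|\D\z''\|^2$ can never be an equality, which holds unless $\D\z''$ is parallel to $\s''$ with $\norm{\s''}=1$. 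Here I would invoke the defining property of the hard case 2 (ii): $\bar\x=(\A-\lambda_1\I)^\dagger\b$ has $\norm{\bar\x}<1$, and the optimal $\x^*$ satisfies $\norm{\x^*}=1$ with $\x^*-\bar\x\in\Null(\A-\lambda_1\I)$, so the component of $\x^*$ (hence of $\s$) inside $\ra(\A-\lambda_1\I)$, namely $\s''$, has norm $\norm{\bar\x}<1$ strictly — this strict inequality is exactly what rules out the degenerate equality and yields a uniform constant.

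Carrying this out, I would fix the uniform constant $\kappa:=\min\{\|\D\w\|^2-\inprod{\s''}{\D\w}^2 \mid \norm{\w}=1,\ \w\text{ supported on coords }K{+}1,\dots,n\}$, which is attained and positive by compactness plus the strict inequality $\norm{\s''}=\norm{\bar\x}<1$ just explained (one checks $\|\D\w\|^2-\inprod{\s''}{\D\w}^2\ge\|\D\w\|^2(1-\norm{\s''}^2)>0$). Then for $\norm{\x-\x^*}$ small enough, $H(\x)=\|\D\z''\|^2-\alpha^2-(2\alpha\beta+\beta^2)\ge\kappa\norm{\z''}^2-C\norm{\z''}^3\ge\tfrac{\kappa}{2}\norm{\z''}^2\ge\tfrac{\kappa}{2\d_n}\beta$, giving $\mathrm{dist}(-\nabla f(\x),N_B(\x))=2\sqrt{H(\x)}\ge\sqrt{2\kappa/\d_n}\,\beta^{1/2}=\sqrt{2\kappa/\d_n}\,(f(\x)-f^*)^{1/2}$ on the boundary near $\x^*$; combining with Lemma~\ref{lem:interiorKL} over the interior and shrinking $\epsilon$ gives \eqref{eq:hardKL}. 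The main obstacle I anticipate is the bookkeeping that makes the "$O(\norm{\z''}^3)$ is negligible compared to $\kappa\norm{\z''}^2$" step rigorous and uniform — in particular cleanly handling the norm constraint $\inprod{\s}{\z}=-\tfrac12\norm{\z}^2$ so that $\z'$ (which is unconstrained in $\D$ but constrained in norm) does not sneak in a first-order contribution, and confirming that the bound $\kappa>0$ genuinely uses $\norm{\bar\x}<1$ rather than just $\norm{\bar\x}\le 1$; the case $\norm{\bar\x}<1$ versus the earlier hard case 2 (i) with $\norm{\bar\x}=1$ is precisely where the exponent would otherwise degrade to $3/4$.
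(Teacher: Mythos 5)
Your proof is essentially correct and follows the same route as the paper: reduce to boundary points via Lemma~\ref{lem:interiorKL}, rewrite the squared distance as $4H(\x)$ with $H(\x)=\|\D\z\|^2-(\alpha+\beta)^2$, and show $H(\x)\ge c\,\beta$ by isolating the degree-two part $\|\D\z\|^2-\alpha^2$ and absorbing the higher-order correction $2\alpha\beta+\beta^2$. The only structural difference is how you get the positive constant: you apply Cauchy--Schwarz, $\alpha^2=\inprod{\s''}{\D\z}^2\le\norm{\s''}^2\|\D\z\|^2$, so $\|\D\z\|^2-\alpha^2\ge(1-\norm{\s''}^2)\|\D\z\|^2\ge(1-\norm{\bar\x}^2)\d_{K+1}\beta$, whereas the paper's Lemma~\ref{lem:funcPsi} uses the Lagrange identity $\|\D\z\|^2-\alpha^2=\sum_{i<j}(\d_i\z_i\s_j-\d_j\z_j\s_i)^2$ and drops the mixed $i,j>K$ terms. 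The resulting constants are comparable ($\sum_{j\le K}\s_j^2=1-\norm{\s''}^2=1-\norm{\bar\x}^2$), and both rely precisely on the strict inequality $\norm{\bar\x}<1$ that characterizes hard case 2 (ii), as you correctly emphasize. Two small flaws: (i) you write that $\alpha$ is $O(\norm{\z''}^2)$, but it is only $O(\norm{\z''})$; this does not damage the argument since the only estimate you actually need, $2|\alpha|\beta+\beta^2=O(\norm{\z''}^3)$, still holds. (ii) The degenerate subcase $\A-\lambda_1\I={\bf 0}$ (so that $K=n$, $\z''={\bf 0}$, $\d_{K+1}$ undefined) is part of hard case 2 (ii) and your argument is vacuous there; the paper disposes of it separately at the start of the proof (in that case $\b={\bf 0}$ and $f$ is constant on the sphere), so you should add that one line.
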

\begin{proof}
We have discussed that the results hold under the case $\A - \lambda_1\I ={\bf 0}$. In the remaining, we consider the case where $\A - \lambda_1\I \neq {\bf 0}$.
In the hard case 2 (ii), since $\x^* \not\perp {\rm Null}(\A - \lambda_1 \I)$, i.e., $\s \not\perp {\rm Null}(\D)$, we know that
$
\sum_{j=1}^{K} \s_j^2>0,
$
and
 from Lemma \ref{lem:funcPsi} in Appendix that there exists $0 < \epsilon' < \epsilon_0$ such that for any $\x \in {\bf bd}(B)$ with $\norm{\x - \x^*}\le \epsilon'$,
\begin{equation*}\label{eq:Hhc2ii}
H(\x) \ge \frac{1}{2} \d_{K+1}(\sum_{j=1}^{K} \s_j^2) \beta,
\end{equation*}
i.e.,
\[
{\rm dist}^2(-\nabla f(\x), N_B(\x)) = 4H(\x) \ge 2\d_{K+1} (\sum_{j=1}^{K} \s_j^2) (f(\x) -f^*).
\]
This, together with Lemma \ref{lem:interiorKL}, competes the proof.
\end{proof}

Before discussing the general results for the hard case 2 (i), we state in the following lemma that in the hard case 2 (i) if the test point $\x$ is restricted in the subspace ${\rm Range}(\A - \lambda_1\I)$, the desired KL inequality can be derived by combining Lemma \ref{lem:KLeasyhard1} with a reduction strategy. To avoid the tedious reduction arguments, we present the proof for the lemma in the Appendix.

\begin{lem}
	\label{lem:KLld}
	Suppose $\lambda_1 <0$. In the hard case 2 (i), there exist some constant $\tau >0$ and sufficiently small $\epsilon >0$ such that
	\begin{equation*}
	\label{eq:easkyhard2ild}
	\big(f(\x)-f^*+\delta_{B}(\x)\big)^{1/2}\le\tau {\rm dist}(-\nabla f(\x), N_B(\x)), \quad \forall\, \x\in B(\x^*,\epsilon)\cap {\rm Range}(\A - \lambda_1 \I).
	\end{equation*}
\end{lem}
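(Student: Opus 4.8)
The plan is to reduce the claim to the KL inequality for a lower-dimensional TRS, by restricting attention to the subspace $V := {\rm Range}(\A - \lambda_1\I)$. Fix a matrix $Q \in \R^{n\times m}$ whose columns form an orthonormal basis of $V$, where $m := n - K = \dim V \ge 1$. In the hard case 2 (i) we have $\b \perp \Null(\A - \lambda_1\I)$, hence $\b \in V$; moreover $\A V \subseteq V$ (as $\A$ and $\A-\lambda_1\I$ commute), and $\x^* = \bar\x = (\A - \lambda_1\I)^\dagger\b \in V$ with $\norm{\x^*} = 1$. Writing $\x = Q\y$ for $\x \in V$, the objective restricts to $f'(\y) := \y^T\A'\y - 2(\b')^T\y$ with $\A' := Q^T\A Q$, $\b' := Q^T\b$, and one has $f(Q\y) = f'(\y)$ and $\delta_B(Q\y) = \delta_{B'}(\y)$ for the unit ball $B'$ of $\R^m$. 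Since $\nabla f(\x) = 2(\A\x - \b) \in V$ and $N_B(\x) \subseteq V$ whenever $\x \in V$ (either $N_B(\x) = \{{\bf 0}\}$ when $\norm{\x} < 1$, or $N_B(\x) = \{\nu\x : \nu \ge 0\} \subseteq V$ when $\norm{\x} = 1$), every quantity in the desired inequality at $\x = Q\y \in V$ equals the corresponding quantity for the reduced TRS $\min_{\norm{\y}\le 1} f'(\y)$ at $\y$; in particular ${\rm dist}(-\nabla f(\x), N_B(\x)) = {\rm dist}(-\nabla f'(\y), N_{B'}(\y))$, and, using $QQ^T\x^* = \x^*$, the optimal value of the reduced TRS equals $f^*$, attained at $\y^* := Q^T\x^*$ with $\norm{\y^*} = 1$. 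Hence it suffices to prove the KL inequality for the reduced TRS at $\y^*$ and transport it back along the isometry $\y \mapsto Q\y$.

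To invoke the earlier results I locate the reduced TRS among the cases of Section~\ref{sec:pre}. From \eqref{eq:optnonconvex} one gets $\nabla f'(\y^*) = Q^T\nabla f(\x^*) = -2\lambda^*\y^*$, and since $\lambda^* = -\lambda_1$ in the hard case, $\A' + \lambda^*\I = Q^T(\A - \lambda_1\I)Q \succ {\bf 0}$ (because $\A - \lambda_1\I$ is positive definite on its range $V$); together with $\lambda^* = -\lambda_1 > 0$ this shows that $\y^*$ satisfies the KKT conditions of the reduced TRS with multiplier $\mu^* = \lambda^*$, that $\lambda_{\min}(\A') = \lambda_{K+1} > \lambda_1$ by \eqref{eq:kD}, and that $\mu^* = -\lambda_1 > -\lambda_{K+1}$. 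I then split on the sign of $\lambda_{K+1}$: if $\lambda_{K+1} < 0$ the reduced TRS is nonconvex and the strict inequality $\mu^* > -\lambda_{\min}(\A')$ puts it in the easy case or hard case 1, so Lemma~\ref{lem:KLeasyhard1} applied to $(\A',\b')$ yields the KL inequality with exponent $1/2$ at $\y^*$; if $\lambda_{K+1} \ge 0$ the reduced TRS is convex and Theorem~\ref{thm:kl-convex} applies, giving exponent $1/2$ provided we are not in its degenerate case ``$\lambda_{\min}(\A') = 0$, $\b' \in {\rm Range}(\A')$ and $\norm{(\A')^\dagger\b'} = 1$''. Transporting back to a neighborhood of $\x^*$ in $V$ uses only $\norm{Q\y - Q\y^*} = \norm{\y - \y^*}$; the degenerate size $m = 1$, where Lemma~\ref{lem:KLeasyhard1} and Theorem~\ref{thm:kl-convex} are not stated, is harmless, since then every point of $V$ near $\x^*$ other than $\x^*$ itself lies in the interior of $B$ and Lemma~\ref{lem:interiorKL} already delivers the bound.

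The one genuinely non-routine point, and the main obstacle, is excluding the $1/4$ case of Theorem~\ref{thm:kl-convex} for the reduced convex problem when $\lambda_{K+1} = 0$; this is exactly where the defining inequality $\lambda_1 < 0$ of the nonconvex regime interacts with the hard-case-2(i) normalization $\norm{\bar\x} = 1$. If $\b' \in {\rm Range}(\A')$, then $\b$ is orthogonal to $\Null(\A)$, the $0$-eigenspace (which lies in $V$ because $0 \neq \lambda_1$), in addition to $\Null(\A - \lambda_1\I)$; hence $\b$ is supported on the eigenvectors $u_i$ of $\A$ with $\lambda_i > 0$, and for those indices $\lambda_i - \lambda_1 > \lambda_i > 0$, so
\[
1 = \norm{\bar\x}^2 = \sum_{\lambda_i > 0}\frac{\inprod{\b}{u_i}^2}{(\lambda_i - \lambda_1)^2} \;<\; \sum_{\lambda_i > 0}\frac{\inprod{\b}{u_i}^2}{\lambda_i^2} = \norm{(\A')^\dagger\b'}^2,
\]
so $\norm{(\A')^\dagger\b'} > 1$ and the degenerate case cannot occur, whence Theorem~\ref{thm:kl-convex} indeed returns exponent $1/2$. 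Everything else — verifying the identities of the reduction, the value and normal-cone correspondences, the KKT computation, and the neighborhood bookkeeping — is routine.
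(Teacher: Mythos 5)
Your overall strategy coincides with the paper's: restrict to the subspace $V = {\rm Range}(\A - \lambda_1\I)$, exhibit an isometric correspondence between points of $B\cap V$ near $\x^*$ and points of the unit ball in $\R^m$, verify that the objective value, gradient, normal cone, and hence the ${\rm dist}(-\nabla f, N_B)$ quantity all transport exactly along this isometry, and then deduce the KL inequality on $V$ from the KL inequality for a lower-dimensional TRS with data $(\A',\b')$. The paper phrases the reduction through the coordinate maps $\mathcal{L}_K,\mathcal{U}_K$ in the $\P$-eigenbasis; your $Q$-basis formulation is equivalent.

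Where you go beyond the paper — and this is the substantive difference — is in how the reduced TRS is classified. The paper simply observes that the reduced optimal multiplier $-\lambda_1$ satisfies $-\lambda_1 > -\lambda_{K+1}$, declares the reduced problem to be in the easy case or hard case 1, and cites Lemma~\ref{lem:KLeasyhard1}. But those categories, and the hypotheses of Lemma~\ref{lem:KLeasyhard1}, presuppose $\lambda_{\min}(\A') = \lambda_{K+1} < 0$, and nothing in hard case 2(i) forces $\lambda_{K+1}$ to be negative (e.g.\ $\A = {\rm Diag}(-1,1,2)$ with a suitable $\b$). You correctly notice this and split on the sign of $\lambda_{K+1}$: in the nonconvex branch you invoke Lemma~\ref{lem:KLeasyhard1} exactly as the paper does; in the convex branch you invoke Theorem~\ref{thm:kl-convex} directly. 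The only non-trivial point in the convex branch is ruling out the exponent-$1/4$ degenerate case when $\lambda_{K+1}=0$, and your observation is clean and correct: if $\b'\in{\rm Range}(\A')$ then $\b$ is supported on eigenvectors with strictly positive eigenvalues, and $0 < \lambda_i < \lambda_i - \lambda_1$ gives $\norm{(\A')^\dagger\b'} > \norm{\bar\x} = 1$ strictly (using $\b\neq\bf 0$, which holds since $\norm{\bar\x}=1$), so $\rho = 1/2$ as needed. Your handling of the borderline $m=1$ case via Lemma~\ref{lem:interiorKL} is likewise sound. In short, your proof follows the paper's reduction strategy but executes the classification of the reduced problem more carefully, filling a small gap in the paper's citation of Lemma~\ref{lem:KLeasyhard1}.
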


After these preparations, we are ready to show in the next lemma that the KL inequality holds with the exponent $3/4$ for the hard case 2 (i), which is a subcase of the TRS-ill case \eqref{eq:con}.
\begin{lem}[KL inequality for the hard case 2 (i)]
	\label{lem:hard2i}
	Suppose that $\lambda_1 <0$. In the hard case 2 (i), there exist some constant $\tau >0$ and sufficiently small $\epsilon >0$ such that
	\begin{equation}
	\label{eq:hardKL2i}
	\big(f(\x)-f^*+\delta_{B}(\x)\big)^{3/4}\le\tau{\rm dist}(-\nabla f(\x), N_B(\x)), \quad \forall\, \x\in B(\x^*,\epsilon).
	\end{equation}
\end{lem}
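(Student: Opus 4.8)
The plan is to reduce the problem to the two cases already handled---the interior case (Lemma \ref{lem:interiorKL}) and the ``Range-restricted'' boundary case (Lemma \ref{lem:KLld})---by decomposing an arbitrary boundary test point $\x$ into its components along $\Null(\A-\lambda_1\I)$ and $\ra(\A-\lambda_1\I)$. Write $\s = \P^T\x^*$ and $\z = \P^T(\x-\x^*)$ as in \eqref{eq:defabP}. Since we are in hard case 2 (i), the optimal solution is $S_0 = \{\bar\x\}$ with $\bar\x = (\A-\lambda_1\I)^\dagger\b$, $\norm{\bar\x}=1$, so $\x^* = \bar\x \in \ra(\A-\lambda_1\I)$, i.e.\ $\s_j = 0$ for $j=1,\dots,K$. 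Split $\z = \z_{\Null} + \z_{\ra}$ accordingly and set $r := \norm{\z_{\Null}}^2 = \sum_{j=1}^K \z_j^2$, which measures how far $\x$ sticks out of $\ra(\A-\lambda_1\I)$. Using \eqref{eq:defabP} and $\s_j=0$ for $j\le K$, one gets $\beta = f(\x)-f^* = \inprod{\z}{\D\z} = \inprod{\z_{\ra}}{\D\z_{\ra}}$ and $\alpha = \inprod{\s}{\D\z} = \inprod{\s}{\D\z_{\ra}}$, so $\alpha,\beta$ depend only on $\z_{\ra}$, while $\norm{\D\z}^2 = \norm{\D\z_{\ra}}^2$ as well (the null components carry no $\D$-weight).

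The key computation is to lower-bound $H(\x) = \norm{\D\z}^2 - (\alpha+\beta)^2$ from \eqref{eq:defH}. First, the normalization $\norm{\x}=1$ forces $2\inprod{\s}{\z} + \norm{\z}^2 = 0$; isolating the null part gives $r = \norm{\z_{\Null}}^2 = -2\inprod{\s}{\z_{\ra}} - \norm{\z_{\ra}}^2$, a quantity that is $O(\norm{\z_{\ra}})$ but sign-indefinite---hence $r$ can be of the same order as $\norm{\z_{\ra}}$, which is exactly why the exponent degrades. Let $\x'$ be the point of $\bf{bd}(B)$ obtained by projecting $\x$ back onto $\ra(\A-\lambda_1\I)$ and renormalizing; then $\x' \in B(\x^*,\epsilon)\cap\ra(\A-\lambda_1\I)$ and $\norm{\x-\x'} = \Theta(\sqrt r)$ up to higher order, while $f(\x')-f^* $ and $f(\x)-f^*$ differ by $O(r)$. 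By Lemma \ref{lem:KLld} applied to $\x'$ we have $(f(\x')-f^*)^{1/2} \le \tau \,{\rm dist}(-\nabla f(\x'),N_B(\x'))$, equivalently $\beta' \lesssim H(\x')$; the remaining task is to control the ``extra'' contribution to $H(\x)$ coming from $r$. A direct expansion of $H$ along the lines of \eqref{eq:klpf1}--\eqref{eq:defH}, together with $\d_{K+1}>0$ and the fact that the $\D$-quadratic form is zero on $\Null(\D)$, should yield a bound of the shape
\[
H(\x) \;\ge\; c_1 \big(f(\x')-f^*\big) \;-\; c_2\, r \;+\; (\text{higher order}),
\]
but since $r$ enters with a bad sign we instead aim for the robust estimate $H(\x) \gtrsim \beta^{3/2}$ by treating two regimes separately: if $r \le C\beta$ the point is ``almost in the range'' and the Lemma \ref{lem:KLld} bound transfers with only an $O(\beta)$ correction, giving $H(\x) \gtrsim \beta \gtrsim \beta^{3/2}$ near $\x^*$; if $r > C\beta$ the out-of-range component dominates and one shows directly that $\norm{\D\z}^2 - (\alpha+\beta)^2 \gtrsim r \cdot \beta^{1/2}$ or $\gtrsim \beta^{3/2}$ by exploiting that $(\alpha+\beta)^2 = O(\norm{\z_{\ra}}^2)$ is controlled by the range part alone while the quantity $\norm{\D\z}^2$ together with the constraint $\norm{\x}=1$ cannot be too small. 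Combining both regimes gives ${\rm dist}^2(-\nabla f(\x),N_B(\x)) = 4H(\x) \gtrsim (f(\x)-f^*)^{3/2}$ on $\bf{bd}(B)\cap B(\x^*,\epsilon)$.

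Finally, we assemble the three pieces: on the interior part of the neighborhood Lemma \ref{lem:interiorKL} gives the stronger exponent $1/2$ (hence also $3/4$, since $f(\x)-f^*+\delta_B(\x)$ is bounded on $B(\x^*,\epsilon)$ and $t^{3/4}\le t^{1/2}$ for small $t$); on the boundary part the computation above gives exponent $3/4$; and since every $\x\in B(\x^*,\epsilon)$ lies in one of these two sets, \eqref{eq:hardKL2i} follows with $\tau$ the larger of the two constants and $\epsilon = \min$ of the two radii. The main obstacle is the boundary estimate $H(\x)\gtrsim (f(\x)-f^*)^{3/2}$: the sign-indefiniteness of the out-of-range ``defect'' $r$ under the unit-norm constraint is precisely the mechanism that produces the exponent $3/4$ rather than $1/2$, and making the two-regime argument rigorous---in particular getting a clean lower bound for $H$ in the regime $r>C\beta$---is where the real work lies; I expect this to be offloaded to an auxiliary lemma in the Appendix, analogous to the role of Lemma \ref{lem:funcPsi} in the hard case 2 (ii).
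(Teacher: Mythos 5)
Your high-level strategy matches the paper's: split into the interior case (Lemma \ref{lem:interiorKL}), the range-restricted boundary case (Lemma \ref{lem:KLld}), and a general boundary point, and for the latter decompose $\z = \P^T(\x-\x^*)$ into its $\ra(\D)$ and $\Null(\D)$ components and argue by the size of the out-of-range part. You also correctly identify that the crux is a lower bound $H(\x)\gtrsim\beta^{3/2}$ on the boundary and that an auxiliary appendix argument should be expected. However, you explicitly leave the hard estimate unproven (``one shows directly that $\norm{\D\z}^2-(\alpha+\beta)^2 \gtrsim r\cdot\beta^{1/2}$ or $\gtrsim\beta^{3/2}$''), and the paper's resolution is an idea you do not anticipate. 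In the regime where the out-of-range component $\z_2$ is \emph{large} (the analogue of your $r>C\beta$), the paper does not try to bound $H(\x)$ term by term. Instead it compares $\x$ with a rescaled point $\x'=\x^*+\P(\rho\z_1)$, $\rho=\gamma/\norm{\z_1}^2>1$, which stays on $\mathbf{bd}(B)$; then $\alpha'=\rho\alpha$, $\beta'=\rho^2\beta$, and expanding $H(\x')/(\beta')^{3/2}$ in terms of $H(\x)/\beta^{3/2}$ and using the \emph{trivial} fact $H(\x')\ge0$ (because $H$ is a squared norm) produces precisely the $\beta^{3/2}$ lower bound, with a constant governed by $\rho-1\ge 1/\sqrt{1-c^2}-1>0$. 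Your construction of $\x'$ (orthogonally project onto $\ra(\A-\lambda_1\I)$ and renormalize) rescales $\s=\P^T\x^*$ as well, which destroys the clean multiplicative relations $\alpha'=\rho\alpha$, $\beta'=\rho^2\beta$ that the comparison trick relies on.

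You also omit several case distinctions the paper needs. It first splits on whether $|\alpha+\beta|\le\frac12\sqrt{\d_{K+1}}\beta^{1/2}$ (where $H(\x)\ge\frac34\d_{K+1}\beta$ is immediate) or not, and in the latter case on the sign of $\alpha$ (the case $\alpha\le0$ gives $H(\x)\ge-\alpha\beta\gtrsim\beta^{3/2}$ directly); your sketch does not have these reductions and hence would have to handle all signs of $\alpha+\beta$ uniformly, which is not straightforward. Moreover, in the regime where $\z_2$ is small (the analogue of your $r\le C\beta$), the paper's ``transfer'' to Lemma \ref{lem:KLld} is not an ``$O(\beta)$ correction'' but a careful perturbation: a point $\tilde\z=\z_1+\z_3\in\ra(\D)$ is constructed with $\z_3\perp\s$, $\z_3\perp\z_1$, $\norm{\z_3}=\norm{\z_2}$, and $\langle\z_3,\D\z_1\rangle\le0$, which forces the separate hypothesis $\dim(\ra(\D))\ge3$; the cases $\dim(\ra(\D))\in\{1,2\}$ require a wholly different argument (Appendix \ref{app:dim2}). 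None of these dimension subtleties appear in your sketch. In short, the architecture of your proposal is right, but the two estimates you flag as ``the real work'' are genuinely the content of the lemma, and the paper closes them with specific devices (the rescaling comparison $H(\x')\ge0$ and the perpendicular-perturbation construction with its dimension caveat) that are absent from your plan.
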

\begin{proof}
Note that in the hard case 2 (i), it holds that
$\x^*\perp {\rm Null}(\A - \lambda_1 \I)$ (or equivalently, $\x^*\in {\rm Range}(\A - \lambda_1 \I)$ or $\s\in\ra(\D)$).  Recall the definition of $\epsilon_0$ in \eqref{eq:eps0} and denote $\epsilon_1 : = \min \left\{
\epsilon_0, 1/2 \right\}$.
For any $\x\in{\bf bd}(B)$ with
$\norm{\x - \x^*}\le \epsilon_1$, we know from $\z = \P^T(\x-\x^*)$ that $\gamma:=\|\z\|^2 \le \epsilon_1^2 \le 1/4$.
Recall the definitions of $\alpha$ and $\beta$ in \eqref{eq:defalbet}.
In the following, we consider the following two cases, i.e., {\bf Case I:} with $|\alpha+\beta|\le\frac{1}{2}\sqrt{\d_{K+1}}\beta^{1/2}$ and {\bf Case II:} with $|\alpha+\beta|>\frac{1}{2}\sqrt{\d_{K+1}}\beta^{1/2}$:

{\bf Case I.} If $|\alpha+\beta|\le\frac{1}{2}\sqrt{\d_{K+1}}\beta^{1/2}$,  from \eqref{eq:kD}, \eqref{eq:defabP} and \eqref{eq:defH}, we have
\begin{equation}\label{eq:hc2ii1}
H(\x)=\|\D \z\|^{2}-(\alpha+\beta)^2
\ge\d_{K+1}\beta-\frac{1}{4}\d_{K+1}\beta = \frac{3}{4}\d_{K+1}\beta, \quad \forall\, \x \in {\bf bd}(B) \mbox{ with } \norm{\x - \x^*}\le \epsilon_1.
\end{equation}

{\bf Case II.} Now suppose  $|\alpha+\beta|>\frac{1}{2}\sqrt{\d_{K+1}}\beta^{1/2}$. Note that $\alpha=\inprod{\s}{\D\z}\to0$ and $\beta=\inprod{\z}{\D\z}\to0$ as $\z\to {\bf 0}$ or equivalently $\x \to \x^*$.
Then, by reducing $\epsilon_0$ if necessary, we have
 $|\alpha|\ge\beta$ and thus
 \begin{equation}
 \label{eq:algebe}
 |\alpha|\ge\frac{1}{4}\sqrt{\d_{K+1}}\beta^{1/2} \quad ~~~~\forall~ \x\in{\bf bd}(B) \mbox{ with }\norm{\x - \x^*}\le \epsilon_1.
 \end{equation}
Then if $\alpha\le0$, we know that for any $\x\in{\bf bd}(B)$ with
$\norm{\x - \x^*}\le \epsilon_1$,
\begin{equation}\label{eq:alphage0}
H(\x)=\|\D\z\|^2-\alpha^2-2\alpha\beta-\beta^2\ge 0-2\alpha\beta+\alpha\beta=-\alpha\beta\ge \frac{1}{4}\sqrt{\d_{K+1}}\beta^{3/2},
\end{equation}
where the first inequality holds as $\|\D\z\|^2-\alpha^2\ge(\s^T \D\z)^2-\alpha^2=0$ and $|\alpha|\ge\beta$.

Next consider the case with $\alpha>0$.
Recall that in the hard case 2 (i), we have $\s\in\ra(\D)$.
Hence, by reducing $\epsilon_0$ if necessary, we know from Lemma \ref{lem:KLld}, \eqref{eq:defabP}, \eqref{eq:defH} and \eqref{eq:distandH} that there exists some constant $c_1 \in (0,1)$ such that for any $\x \in{\bf bd}(B)\cap {\rm Range}(\A - \lambda_1 \I)$ with $\norm{\x - \x^*}\le \epsilon_1$,
\begin{equation}
\label{eq:KLrange}
\frac{1}{4}{\rm dist}^2(-\nabla f(\x), N_B(\x))= H(\x) = \norm{\D \z - (\alpha + \beta)(\s+\z)}^2 \ge c_1^2 \d_{K+1}\beta.
\end{equation}
For any given $\x \in {\bf bd}(B)$ satisfying $\norm{\x - \x^*}\le \epsilon_1$ {and $\x\notin{\rm Range}(\A - \lambda_1 \I)$}, consider the orthogonal decomposition of $\z=\z_1+\z_2$ such that  $\z_1\in\ra(\D)$ and $\z_2\in\Null(\D)$. {We note that {$\z_1,\z_2\neq\bf0$} since $\x \notin{\rm Range}(\A - \lambda_1\I)$, $\x^*\in{\rm Range}(\A - \lambda_1 \I)$ and $\alpha >0$.} Let \begin{equation}\label{eq:defc}
c=\frac{\d_{K+1} c_1 }{16\sqrt{\d_n^2+\d_{K+1}^2}}<\frac{c_1}{16}<\frac{1}{16},
\end{equation} where  $c_1$ is the constant in \eqref{eq:KLrange}.
We consider two subcases:
\begin{enumerate}
\item
Suppose that $\|\z_2\|<c\sqrt\gamma$.
 Then, we have
\begin{equation}\label{eq:betage}
\beta = \inprod{\D\z}{\z} = \inprod{\D\z_1}{\z_1}\ge\d_{K+1}\|\z_1\|^2\ge\d_{K+1}(1-c^{2})\gamma\ge\frac{\d_{K+1}}{4}\gamma.
\end{equation}
In our proof, the dimension of subspace ${\rm Range}(\D)$ needs to be taken into consideration. Specifically, we use different strategies to handle cases with $\dim(\ra(\D))\le2$ and $\dim(\ra(\D))\ge3$ (which implicitly requires $n\ge 4$). The case with $\dim(\ra(\D))\le2$ is less insightful and its proof is presented in Appendix \ref{app:dim2}.

\medskip

Now we focus on the case with $\dim(\ra(\D))\ge3$.
Let $\tilde \z=\z_1+\z_3$ with $\z_3\in\ra(\D)$, $\z_3\perp\s_1$, $\z_3\perp \z_1$,  $\|\z_3\|=\|\z_2\|$ and $\langle \z_3,\D\z_1\rangle\le0$. Note that $\z_3$, and consequently $\tilde\z$, is well-defined because $\dim(\ra(\D))\ge3$.
From $\tilde \z$, we can further construct $\tilde \x$, and the corresponding $\tilde \alpha$ and $\tilde \beta$ as in \eqref{eq:defabP}, in the following way:
\[
\tilde \x =  \x^* + \P \tilde\z, \quad
\tilde\alpha=\s^T\D\tilde\z  \quad \mbox{ and } \quad \tilde\beta=\tilde \z^T\D\tilde\z.
\]
Since $\norm{\s} = 1$, $\D\z_2=0$, $\langle \z_3,\D\z_1\rangle\le0$ and $\|\z_3\|=\|\z_2\|$,
we have
\begin{equation}\label{eq:alphatil}
|\tilde\alpha-\alpha|=|\s^T\D(\tilde\z-\z)|=|\s^T\D(\z_3-\z_2)|=|\s^T\D\z_3|\le\norm{\s}\norm{\D \z_3}\le \d_n\|\z_2\|,
\end{equation}
and
\begin{equation*}\label{eq:betatilub}
|\tilde\beta-\beta|=2\langle \z_3,\D\z_1\rangle+\langle \z_3,\D\z_3\rangle \le \d_n\|\z_3\|_2^2 = \d_n\|\z_2\|_2^2 \le \d_n c^2\gamma\overset{\eqref{eq:betage}}\le\frac{4\d_n c^2}{\d_{K+1}}\beta.
\end{equation*}
Meanwhile, we have from \eqref{eq:defc} that $4 \d_n c^2/\d_{K+1} \le 1/4$, and consequently,
\begin{equation}
\label{eq:betatilde}
\frac{1}{4}\beta \le (1 - \frac{4 \d_n c^2}{\d_{K+1}})\beta \le \tilde\beta \le (1 + \frac{4\d_n c^2}{\d_{K+1}}) \beta \le 2 \beta.
\end{equation}
Furthermore, it holds that  $\norm{\tilde \x - \x^*}^2 = \|\tilde\z\|^2=\|\z\|^2=\gamma$ and $\norm{\tilde \x}^2 = \|\s+\tilde\z\|^2=\|\s+\z\|^2 = \norm{\x}^2 = 1$.
Since $\tilde \x \in {\bf bd}(B)\cap {\rm Range}(\A - \lambda_1\I)$ and $\norm{\tilde \x - \x^*} = \sqrt{\gamma} \le \epsilon_1$, we know from \eqref{eq:KLrange} that
\begin{equation}\label{eq:consteasy}
 \frac{1}{2}{\rm dist}(-\nabla f(\tilde\x), N_B(\tilde\x)) = \|\D \tilde \z-(\tilde\alpha+\tilde\beta)(\s+\tilde \z)\|\ge c_{1}\sqrt{\d_{K+1}}\tilde\beta^{1/2}.
\end{equation}
Next, we bound the difference between ${\rm dist}(-\nabla f(\tilde\x), N_B(\tilde\x))/2$ and ${\rm dist}(-\nabla f( \x), N_B( \x))/2$ by
\begin{equation}\label{eq:tildeDD}
\begin{array}{lll}
&&\|[\D \tilde \z-(\tilde\alpha+\tilde\beta)(\s+\tilde \z)]-[\D \z-(\alpha+\beta)(\s+\z)]\|\\
&=&\|\D (\tilde \z-\z)-(\tilde\alpha-\alpha)\s-\tilde\alpha\tilde\z+\alpha\z-\tilde\beta(\s+\tilde \z)+\beta(\s+\z)\|\\
&\le &\|\D\z_3\|+|\tilde\alpha-\alpha|+|\tilde\alpha|\|\tilde\z\|+|\alpha|\|\z\|+\tilde\beta+\beta\\
&\le& 2\d_{n}\|\z_2\|+2\d_n\gamma+ 3\beta,
\end{array}
\end{equation}
where  %
the first inequality follows from  $\D(\tilde\z-\z)=\D(\z_3-\z_2)=\D\z_3$, $\|\s\|=1$ and $\|\s+\tilde\z\|=\|\s+\z\|=1$,
and the last inequality follows from
$\|\D\z_3\|\le \d_n\|\z_3\|=\d_n\|\z_2\|$, \eqref{eq:alphatil},
$|\alpha|\|\z\|=|\s^T\D\z|\|\z\|\le\d_n\|\z\|^2=\d_n\gamma$,
$|\tilde\alpha|\|\tilde\z\|=|\s^T\D\tilde\z|\|\tilde\z\|\le\d_n\|\tilde\z\|^2=\d_n\gamma$ and \eqref{eq:betatilde}.
The right-hand-side of \eqref{eq:tildeDD} can be further bounded by
\begin{equation*}
\begin{array}{lll}
2\d_{n}\|\z_2\|+2\d_n\gamma+ 3\beta&\overset{\eqref{eq:betage}}\le& 2\d_{n}\|\z_2\|+(3+\frac{8\d_n}{\d_{K+1}})\beta\\
&\overset{}\le& 2\d_{n}c\sqrt\gamma+(3+\frac{8\d_n}{\d_{K+1}})\beta\\
&\overset{\eqref{eq:defc}}\le&\frac{c_{1}}{8}\d_{K+1}\sqrt\gamma +(3+\frac{8\d_n}{\d_{K+1}})\beta\\
&\overset{\eqref{eq:betage}}\le&\frac{c_{1}}{4}\sqrt{\d_{K+1}}\beta^{1/2}+(3+\frac{8\d_n}{\d_{K+1}})\beta\\[2pt]
&{}=& \big(\frac{c_{1}}{4}\sqrt{\d_{K+1}} + (3 + \frac{8\d_n}{\d_{K+1}})\beta^{1/2}\big)\beta^{1/2}.
\end{array}
\end{equation*}
Since $\beta\to 0$ as $\x \to \x^*$, by reducing  $\epsilon_0$ if necessary, we have
\[
(3 + \frac{8\d_n}{\d_{K+1}})\beta^{1/2} \le \frac{c_1}{8} \sqrt{\d_{K+1}}
\]
and consequently,
\begin{equation}
\label{eq:bdc3}
2\d_{n}\|\z_2\|+2\d_n\gamma+ 3\beta \le \frac{3c_1}{8} \sqrt{\d_{K+1}}.
\end{equation}
Therefore, we know that
\begin{equation}\label{eq:hc2i2a}
\begin{array}{lll}
H(\x)&=&\|\D \z-(\alpha+\beta)(\s+\z)\|\\
&\ge& c_{1}\sqrt{\d_{K+1}}\tilde\beta^{1/2}-2\d_{n}\|\z_2\|-2\d_n\gamma-c_3\beta\\
&\ge&\frac{c_{1}}{2}\sqrt{\d_{K+1}}\beta^{1/2}-\frac{3c_{1}}{8}\sqrt{\d_{K+1}}\beta^{1/2} \\
&\ge&\frac{c_{1}}{8}\sqrt{\d_{K+1}}\beta^{1/2}, \\
\end{array}
\end{equation}
where the first inequality is due to \eqref{eq:consteasy} and \eqref{eq:tildeDD},
the second equality is due to \eqref{eq:betatilde} and \eqref{eq:bdc3}.
\item
On the other hand, suppose that  $\|\z_2\|\ge c\sqrt\gamma$. Then, $\|\z_1\|\le \sqrt{(1-c^2)\gamma}$.
Let  $\z'=\rho \z_1$  and {$\x'=\x^*+\P\z' = \s + \z'$}, where
$\rho=\gamma/\|\z_1\|^2\ge1/\sqrt{1-c^2}>1.$
Then we have
$$\norm{\x'} = \|\s+\z'\|^2=1+2\s^T\z'+\|\z'\|^2=1-\rho\gamma+\rho^2\|\z_1\|^2=1,$$
where the third equality is due to
$$2\s^T\z'=2\rho\s^T\z_1=2\rho\s^T\z=\rho(\|\s+\z\|^2-\|\s\|^2-\|\z\|^2) = \rho(1 - 1 -\gamma)=-\rho\gamma.$$
Similar as in \eqref{eq:defabP}, define $\alpha'$ and $\beta'$ corresponding to $\x'$ as  $\alpha'=\s^T\D\z'=\rho\alpha$ and $\beta'=\z'^T\D\z'=\rho^2\beta$.
Then, recalling the definition of $H$ in \eqref{eq:defH}, we have
\begin{eqnarray*}
\frac{H(\x')}{(\beta')^{3/2}}&=&\frac{\|\D\z'\|^2-\alpha'^2-2\alpha'\beta'-\beta'^2}{(\beta')^{3/2}}\\
&=&\frac{(\|\D\z\|^2-\alpha^2)/\rho-2\alpha\beta-\rho\beta^2}{\beta^{3/2}}\\
&=&\frac{(\|\D\z\|^2-\alpha^2-2\alpha\beta-\beta^2)/\rho+2(1/\rho-1)\alpha\beta+(1/\rho-\rho)\beta^2}{\beta^{3/2}}\\
&\le & \frac{H(\x)}{\rho\beta^{3/2}}+\frac{2(1/\rho-1)\alpha\beta}{\beta^{3/2}},
\end{eqnarray*}
where the last inequality is due to $1/\rho-\rho<0$.
Note that since $\|\x'\|=1$, it holds that $H(\x')=\|\D\z'\|-(\alpha'+\beta')^2=\|\D\z'-(\alpha'+\beta')(\x')\|^2\ge0$.
Hence we have
\begin{equation}\label{eq:hc2i2b}
\frac{H(\x)}{\beta^{3/2}}\ge\frac{2\rho(1-1/\rho)\alpha\beta}{\beta^{3/2}}\overset{\eqref{eq:algebe}}\ge\frac{\rho-1}{2}\sqrt{\d_{K+1}}\ge\frac{1/\sqrt{1-c^2}-1}{2}\sqrt{\d_{K+1}}.
\end{equation}
\end{enumerate}
Combining (\ref{eq:defabP}--\ref{eq:distandH}), \eqref{eq:hc2ii1}, \eqref{eq:alphage0},  \eqref{eq:hc2i2a} and \eqref{eq:hc2i2b} (and \eqref{eq:hc2i2adim2} in Appendix \ref{app:dim2} for the case with ${\rm dim}({\rm Range}(\D)) \le 2$) with Lemma \ref{lem:interiorKL}, and noting that, {by reducing $\epsilon_0$ if necessary}, $\beta^{3/4}\le\beta^{1/2}$, we know that \eqref{eq:hardKL2i} holds with some positive constant $\tau$ and sufficient small $\epsilon >0$.
\end{proof}

We summarize the results of Theorems \ref{thm:kl-convex} and Lemmas \ref{lem:interiorKL}--\ref{lem:hard2i}  in the following theorem.
\begin{thm}
\label{cor}
There exist some constant $\tau_{KL} >0$ and sufficiently small $\epsilon >0$ such that the KL inequality holds
\begin{equation}
\label{eq:KLgeneral}
\big(f(\x)-f^*+\delta_{B}(\x)\big)^\varrho\le\tau_{KL}{\rm dist}(-\nabla f(\x), N_B(\x)), \quad \forall\, \x\in B(\x^*,\epsilon),
\end{equation}
where $\varrho=\left\{
\begin{aligned}
& {3/4}, \mbox{ for the {TRS-ill case} \eqref{eq:con},} \\%[5pt]
& {1/2}, \mbox{ otherwise.}
\end{aligned}
\right.$
\end{thm}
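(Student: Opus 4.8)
The plan is to prove Theorem~\ref{cor} as a consolidation of the case-by-case results established above, organized by a two-level case split: first on the sign of $\lambda_1$, and then, when $\lambda_1 < 0$, on the easy/hard subcase classification summarized in the table of Section~\ref{sec:pre}. Before starting the case analysis I would first record the key identification that drives everything: the \emph{TRS-ill case} condition \eqref{eq:con}, namely $\lambda_1 \le 0$, $\b \in \ra(\A - \lambda_1\I)$ and $\norm{(\A-\lambda_1\I)^\dagger\b} = 1$, splits cleanly according to the sign of $\lambda_1$. When $\lambda_1 = 0$ it reduces to $\b \in \ra(\A)$ together with $\norm{\A^\dagger\b} = 1$, which is exactly the exceptional subcase singled out in Theorem~\ref{thm:kl-convex}; and when $\lambda_1 < 0$ it is precisely \emph{hard case 2~(i)}, as read off directly from the table. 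Conversely, every remaining scenario --- the convex case with $\lambda_1 > 0$, or with $\lambda_1 = 0$ but $\b\notin\ra(\A)$ or $\norm{\A^\dagger\b}\neq 1$; and, for $\lambda_1<0$, the easy case, hard case~1, and hard case~2~(ii) --- is \emph{not} TRS-ill, and I will show each of these admits the KL exponent $1/2$.

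For the convex case $\lambda_1 \ge 0$, I would simply invoke Theorem~\ref{thm:kl-convex}, which already delivers the inequality \eqref{eq:KLgeneral} \emph{globally} on $\R^n$ with $1-\rho$ in place of $\varrho$, where $\rho = 1/4$ in the exceptional subcase identified above and $\rho = 1/2$ otherwise; restricting to any ball $B(\x^*,\epsilon)$ is immediate, and $1-\tfrac14 = \tfrac34$, $1-\tfrac12 = \tfrac12$ match the claimed exponents.

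For the nonconvex case $\lambda_1 < 0$, I would dispatch each subcase to the corresponding lemma proved above: Lemma~\ref{lem:KLeasyhard1} covers the easy case and hard case~1 with exponent $1/2$; Lemma~\ref{lem:hard2ii} covers hard case~2~(ii) with exponent $1/2$; and Lemma~\ref{lem:hard2i} covers hard case~2~(i) with exponent $3/4$. Each of these lemmas already handles both boundary and interior test points (the interior case being absorbed through Lemma~\ref{lem:interiorKL}), so no further splitting is needed. Since the table shows these four subcases are exhaustive for $\lambda_1<0$, and since hard case~2~(i) is exactly the TRS-ill instance with $\lambda_1<0$ while the other three are not TRS-ill, this reproduces the stated value of $\varrho$ with no gaps. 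As for the constants, for any fixed data $(\A,\b)$ exactly one subcase occurs, so the $\tau$ and $\epsilon$ furnished by the relevant lemma (or by Theorem~\ref{thm:kl-convex}) serve directly as $\tau_{KL}$ and $\epsilon$, with no need to take minima or maxima across cases.

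Since all the analytic work is already done in Theorem~\ref{thm:kl-convex} and Lemmas~\ref{lem:interiorKL}--\ref{lem:hard2i}, there is no genuine analytic obstacle; the only thing requiring care is the \emph{matching} step, i.e.\ verifying that condition \eqref{eq:con} partitions exactly into the exceptional convex subcase of Theorem~\ref{thm:kl-convex} and hard case~2~(i), and that the complementary scenarios are covered without overlap gaps by Lemmas~\ref{lem:KLeasyhard1}, \ref{lem:hard2ii} and \ref{lem:hard2i}. One should also double-check the degenerate instance $\A - \lambda_1\I = {\bf 0}$, but the proof of Lemma~\ref{lem:hard2ii} explicitly absorbs it into hard case~2~(ii) with exponent $1/2$, so it causes no exception.
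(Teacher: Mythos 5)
Your proposal is correct and is essentially the same consolidation argument the paper uses: the theorem is stated as a summary of Theorem~\ref{thm:kl-convex} and Lemmas~\ref{lem:interiorKL}--\ref{lem:hard2i}, with the TRS-ill case \eqref{eq:con} splitting by sign of $\lambda_1$ into the exceptional convex subcase and hard case~2~(i) exactly as you describe. The extra care you take in verifying that condition \eqref{eq:con} matches hard case~2~(i) when $\lambda_1<0$ (via $\b\perp\Null(\A-\lambda_1\I)$ and $\|(\A-\lambda_1\I)^\dagger\b\|=1$ excluding hard case~1, where that norm exceeds one) and in flagging the degenerate $\A-\lambda_1\I=\bf 0$ instance is a helpful elaboration of what the paper leaves implicit, but it is not a different route.
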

The above theorem in fact shows us that the KL exponent $3/4$ holds for all cases, due to the fact that the function value $f(\x)$ is bounded in the unit ball and the inequality  $t^{3/4}\le t^{1/2}$ for all $0\le t\le 1$.

\section{Convergence analysis of first order methods}
Recently,  Beck and Vaisbourd \cite{beck2018globally} demonstrated that with  a proper initialization, {projected gradient methods} (in fact, more general first order conic methods) for solving the TRS converge to a global optimal solution.
However, the rate of convergence for these algorithms are not studied in their paper.
Meanwhile, it is well known that projected gradient methods converge to a stationary point in  rate  $O(1/\sqrt{k})$  for {minimizing a} nonconvex smooth function with global Lipschitz continuous gradient over a closed convex set with $k$ being the iteration index; see, e.g., \cite[Chapter 1]{nesterov1998introductory} for unconstrained minimization and \cite[Chapter 10]{beck2017first} for general composite minimization.
Hence, a straightforward conclusion will be that  {projected gradient methods} for the TRS achieve a sublinear iteration complexity of $O(1/\sqrt{k})$.
Here, we improve this result by showing that the local convergence rate of {projected gradient methods} for solving the TRS can be improved to at least $O(1/k^2)$. In fact, in most cases, it even enjoys local linear convergence. As one can observe in the subsequent analysis in this section, the cornerstone for these superior improvements is the obtained KL inequality for the TRS.

Let $\Pi_B:\R^n \to \R^n$ be the Euclidean projector onto the unit norm ball $B$, i.e., for any $\z\in\R^n$, \[\Pi_B(\z)=\left\{
\begin{aligned}
& {\frac{\z}{\|\z\|}}, &\mbox{ if } \norm{\z}  \ge 1, \\
& \z, &\mbox{ otherwise.}
\end{aligned}
\right.\]  Our main contribution in this section is summarized in the following theorem for constant step size projected gradient methods. Note that the Lipschitz constant of the gradient of the objective function in the TRS is $L = 2\|\A\|_2$ and the step size we will use is $t\in(0,\frac{2}{L})$, i.e., $t\in(0,\frac{1}{\norm{\A}_2})$.
Specifically, we show that {projected gradient methods} achieve a locally sublinear rate of $O(1/k^2)$ in the \emph{TRS-ill case}; otherwise, the local convergence rate can be further improved to linear.

\begin{thm}[Constant step size projected gradient methods]
\label{thm:rate}
Let the step size $t\in(0, 2/L)$, i.e., $t\in(0,1/\norm{\A}_2)$. Suppose for all $k\ge 0$ that  $\x_{k+1}=\Pi_B(\x_k-t\nabla f(\x_k))$ and the sequence $\{\x_k\}$ converge to an optimal solution $x^*$ to the TRS $\rm(P_0)$.
Then, there exists a sufficiently large positive integer $N$ such that
$\{\x_k\}_{k\ge N} \subseteq B(\x^*,\epsilon)$ where $\epsilon >0$ is the same constant in  Theorem  \ref{cor}.

Let $M=\frac{\tau_{KL} (2\|A\|_2+1/t)}{\sqrt{1/t-\|A\|_2}}$ with $\tau_{KL} >0$ being the constant in Theorem \ref{cor}. Then, in the \emph{TRS-ill case} \eqref{eq:con}, it holds that
\begin{equation}
\label{eq:rate1}
f(\x_k)-f^*\le\frac{1}{\left(\frac{k-N}{M^2(2+\frac{3}{2M^2}\sqrt{f(x_N)-f^*})}+\frac{1}{\sqrt{f(x_N)-f^*}}\right)^2}, \quad \forall\, k\ge N;
\end{equation}
otherwise, it holds that
\begin{equation*}
\label{eq:rate2}
f(\x_k)-f^*\le\left(\frac{M^2}{M^2 + 1}\right)^k(f(\x_N)-f^*), \quad \forall\, k\ge N.
\end{equation*}
\end{thm}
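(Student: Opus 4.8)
The plan is to derive the convergence rates as a fairly standard consequence of the KL inequality from Theorem~\ref{cor}, combined with the sufficient decrease property of the projected gradient step. First I would record the two basic inequalities governing a projected gradient iterate $\x_{k+1} = \Pi_B(\x_k - t\nabla f(\x_k))$ with step size $t\in(0,1/\norm{\A}_2)$: a \emph{sufficient decrease} estimate of the form
\[
f(\x_k) - f(\x_{k+1}) \ge \Big(\tfrac{1}{t} - \norm{\A}_2\Big)\norm{\x_{k+1} - \x_k}^2,
\]
which follows from the descent lemma applied to the $L=2\norm{\A}_2$-smooth function $f$ together with the projection inequality, and a \emph{stationarity/slope} estimate bounding ${\rm dist}(-\nabla f(\x_{k+1}), N_B(\x_{k+1}))$ from above by a multiple of $\norm{\x_{k+1}-\x_k}$, again via the optimality characterization of the projection and Lipschitz continuity of $\nabla f$; here the multiple is $2\norm{\A}_2 + 1/t$. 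Since $\{\x_k\}$ converges to $\x^*$, there is $N$ with $\{\x_k\}_{k\ge N}\subseteq B(\x^*,\epsilon)$, so Theorem~\ref{cor} applies to each such iterate.

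Next I would combine these ingredients. Writing $r_k := f(\x_k) - f^* \ge 0$ and using the KL inequality $r_{k+1}^{\varrho} \le \tau_{KL}\,{\rm dist}(-\nabla f(\x_{k+1}),N_B(\x_{k+1})) \le \tau_{KL}(2\norm{\A}_2 + 1/t)\norm{\x_{k+1}-\x_k}$, together with the sufficient decrease bound $\norm{\x_{k+1}-\x_k}^2 \le (r_k - r_{k+1})/(1/t - \norm{\A}_2)$, one obtains
\[
r_{k+1}^{2\varrho} \le M^2 (r_k - r_{k+1}), \qquad M = \frac{\tau_{KL}(2\norm{\A}_2 + 1/t)}{\sqrt{1/t - \norm{\A}_2}}.
\]
For $\varrho = 1/2$ (the non-ill case) this reads $r_{k+1} \le M^2(r_k - r_{k+1})$, i.e. $r_{k+1} \le \frac{M^2}{M^2+1} r_k$, giving the linear rate by induction from $k=N$. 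For $\varrho = 3/4$ (the TRS-ill case) this reads $r_{k+1}^{3/2} \le M^2(r_k - r_{k+1})$, which is the recursion that yields an $O(1/k^2)$ rate; the precise constant in \eqref{eq:rate1} comes from carefully solving this recursion rather than just invoking an asymptotic bound.

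The main technical obstacle, and the step I would spend the most care on, is converting the recursion $r_{k+1}^{3/2} \le M^2(r_k - r_{k+1})$ into the explicit closed-form bound \eqref{eq:rate1}. The standard trick is to pass to $u_k := r_k^{-1/2}$ and show $u_{k+1} - u_k \ge c$ for a constant $c$, but because the decrease is in terms of $r_k - r_{k+1}$ rather than $r_{k+1}$, one must control the ratio $r_{k+1}/r_k$ (it is bounded away from $0$ only when $r_k$ is not too large, which is why the term $\tfrac{3}{2M^2}\sqrt{f(\x_N)-f^*}$ appears, correcting the naive increment $1/M^2$). Concretely I would use the elementary inequality, valid for $0 \le r_{k+1} \le r_k$, that $r_{k+1}^{-1/2} - r_k^{-1/2} \ge \frac{r_k - r_{k+1}}{2 r_k^{1/2} r_{k+1}} \ge \frac{r_k-r_{k+1}}{2 r_k^{1/2} r_{k+1}}$ and bound $r_k^{1/2}r_{k+1}$ above using $r_{k+1}\le r_k\le r_N$ and the recursion, telescoping from $N$ to $k$; monotonicity of $\{r_k\}$ (from sufficient decrease) is used throughout. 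Everything else—the descent lemma, the projection inequalities, the reduction of the two cases—is routine once the recursion is in hand.
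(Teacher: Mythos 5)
Your proposal follows essentially the same route as the paper: derive the sufficient-decrease inequality and the slope bound for a projected gradient step, combine them with the KL inequality from Theorem~\ref{cor} to obtain the recursion $r_{k+1}^{2\varrho}\le M^2(r_k-r_{k+1})$ with $r_k=f(\x_k)-f^*$, then split into $\varrho=1/2$ (linear rate) and $\varrho=3/4$ (pass to $1/\sqrt{r_k}$, bound the increment from below, telescope).

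One small but real algebraic slip: the ``elementary inequality'' you quote is in the wrong direction. Writing $a=\sqrt{r_{k+1}}\le b=\sqrt{r_k}$, one has
\[
\frac{1}{\sqrt{r_{k+1}}}-\frac{1}{\sqrt{r_k}}=\frac{r_k-r_{k+1}}{r_{k+1}\sqrt{r_k}+r_k\sqrt{r_{k+1}}},
\]
and since $r_{k+1}\sqrt{r_k}\le r_k\sqrt{r_{k+1}}$, the denominator is bounded above by $2r_k\sqrt{r_{k+1}}$, giving $r_{k+1}^{-1/2}-r_k^{-1/2}\ge \frac{r_k-r_{k+1}}{2\,r_k\sqrt{r_{k+1}}}$ rather than $\ge \frac{r_k-r_{k+1}}{2\,r_k^{1/2}r_{k+1}}$ as you wrote (indeed, the latter bound runs the other way). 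The paper instead substitutes $r_k\ge r_{k+1}+\frac{1}{M^2}r_{k+1}^{3/2}$ directly into $\frac{1}{\sqrt{r_{k+1}}}-\frac{1}{\sqrt{r_k}}$ and rationalizes, which cleanly yields the increment lower bound $\frac{1}{M^2(2+\frac{3}{2M^2}\sqrt{r_N})}$ via $\sqrt{1+x}\le 1+x/2$. Your corrected inequality, combined with $r_k\le r_{k+1}(1+\sqrt{r_{k+1}}/M^2)$ from the recursion, would recover the same constant, so the fix is straightforward; but as stated the step would not go through.
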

\begin{proof}
Since $\x_k \to \x^*$ as $k\to \infty$, for the given constant $\epsilon >0$ in Theorem \ref{cor}, there exists $N>0$ such that
$\x_k\in B(\x^*,\epsilon)$ for all $k\ge N$.

For all $k\ge 0$, one can rewrite the updating rule $\x_{k+1}=\Pi_B\big(\x_k-t\nabla f(\x_k)\big)$ in the following manner
\[\x_{k+1}=\argmin_\u \left\{ \nabla f(\x_k)^T(\u-\x_k)+\frac{1}{2t}\|\u-\x_k\|^2\right\}+\delta_B(\u),\]
whose optimality condition asserts:
\[ 0\in  \x_{k+1}-(\x_k-t\nabla f(\x_k)) +N_B(\x_{k+1})\] with $N_B(\x_{k+1})$ being the normal cone of $B$ at $\x_{k+1}$. Denote for all $k\ge 0$, $\v_{k+1}= - (\x_{k+1} - \x_k)/t - \nabla f(\x_k)$. Then, it holds for all $k\ge 0$ that $\v_{k+1} \in N_B(\x_{k+1})$ and
\begin{equation}
\label{eq:con1}
\|\v_{k+1}+\nabla f(\x_k)\|= \|\x_{k+1}-\x_k\|/t.
\end{equation}
Since $1/t > \norm{\A}_2=L/2$, from Lemma 10.4 in \cite{beck2017first}, we know that
\begin{equation}
\label{eq:con2}
f(\x_k)-f(\x_{k+1})\ge ({1/t-\|\A\|_2})\|\x_k-\x_{k+1}\|^2, \quad \forall\, k\ge 0.
\end{equation}
Meanwhile, from the KL inequality \eqref{eq:KLgeneral}, it holds
for all $k\ge N$ that
\begin{eqnarray*}
(f(\x_{k+1})-f^*)^{\varrho}&\le & \tau_{KL} {\rm dist}(-\nabla f(\x_{k+1}), N_B(\x_{k+1}))\\
&\le& \tau_{KL} \|\nabla f(\x_{k+1})+\v_{k+1}\| \\
&=&\tau_{KL} \|\nabla f(\x_{k+1})-\nabla f(\x_k)+\nabla f(\x_k)+\v_{k+1}\|\\
&\le& \tau_{KL} \big(\|\nabla f(\x_{k+1})-\nabla f(\x_k)\|+\|\nabla f(\x_k)+\v_{k+1}\|\big),
\end{eqnarray*}
where $\varrho$ is the KL exponent and $\tau_{KL} >0$ is the constant in Theorem \ref{cor}.
The above inequality, together with the Lipschitz continuity of $\nabla f$ (note that the Lipschitz constant is $L=2\|\A\|_2$) and \eqref{eq:con1}, gives
\[
(f(\x_{k+1})-f^*)^{\varrho}\le \tau_{KL} (2\|\A\|_2+1/t)\|\x_k-\x_{k+1}\|, \quad \forall\, k\ge N.
\]
Substituting this to \eqref{eq:con2} implies
\[
(f(\x_{k+1})-f^*)^{\varrho}\le\frac{\tau_{KL} (2\|\A\|_2+1/t)}{\sqrt{1/t-\|\A\|_2}}(f(\x_{k})-f(\x_{k+1}))^\frac{1}{2}, \quad\forall\, k\ge N.
\]
Defining $r_k=f(\x_{k})-f^*$ and $M=\frac{\tau_{KL} (2\|\A\|_2+1/t)}{\sqrt{1/t-\|\A\|_2}}$, we have
\begin{equation}
\label{eq:rk}
r_{k+1}^{\varrho}\le M(r_{k}-r_{k+1})^\frac{1}{2}, \quad \forall\, k\ge N.
\end{equation}

We divide our discussions into two cases:
\begin{itemize}
        \item In the \emph{TRS-ill case} \eqref{eq:con}, from Theorem \ref{cor} and \eqref{eq:rk}, we have $r_{k+1}^\frac{3}{2}\le M^2(r_k-r_{k+1})$.
        Hence, for all $k\ge N$, we have {from \eqref{eq:con2}} that $r_{k+1} \le r_k$ and
              \begin{eqnarray*}
        \frac{1}{\sqrt{r_{k+1}}}-\frac{1}{\sqrt{r_{k}}}&\ge&\frac{1}{\sqrt{r_{k+1}}}-\frac{1}{\sqrt{r_{k+1}+\frac{1}{M^2}r_{k+1}^{3/2}}}\\
        &=&\frac{\sqrt{r_{k+1}+\frac{1}{M^2}r_{k+1}^{3/2}}-\sqrt{r_{k+1}}}{\sqrt{r_{k+1}}\sqrt{r_{k+1}+\frac{1}{M^2}r_{k+1}^{3/2}}}\\
        &=&\frac{r_{k+1}^{3/2}}{M^2\sqrt{r_{k+1}}\sqrt{r_{k+1}+\frac{1}{M^2}r_{k+1}^{3/2}}(\sqrt{r_{k+1}+\frac{1}{M^2}r_{k+1}^{3/2}}+\sqrt{r_{k+1}})}\\
        &=&\frac{1}{M^2\sqrt{1+\frac{1}{M^2}\sqrt{r_{k+1}}}(\sqrt{1+\frac{1}{M^2}\sqrt{r_{k+1}}}+1)}\\
        &=&\frac{1}{M^2(\sqrt{1+\frac{1}{M^2}\sqrt{r_{k+1}}}+1+\frac{1}{M^2}\sqrt{r_{k+1}})}\\
        &\ge&\frac{1}{M^2(2+\frac{3}{2M^2}\sqrt{r_{k+1}})}\\
        &\ge&\frac{1}{M^2(2+\frac{3}{2M^2}\sqrt{r_{N}})},
        \end{eqnarray*}
        where the second inequality follows from $\sqrt{1+\frac{1}{M^2}\sqrt{r_{k+1}}}\le 1+\frac{1}{2M^2}\sqrt{r_{k+1}}$.
Hence, for all $k\ge N$,  we have
\[\frac{1}{\sqrt{r_k}}\ge\frac{k-N}{M^2(2+\frac{3}{2M^2}\sqrt{r_{N}})}+\frac{1}{\sqrt{r_N}}
\]
and thus
\[ r_k\le\frac{1}{\left(\frac{k-N}{M^2(2+\frac{3}{2M^2}\sqrt{r_{N}})}+\frac{1}{\sqrt{r_N}}\right)^2}\,.\]

        \item Otherwise, from Theorem \ref{cor} and \eqref{eq:rk}, we have $r_{k+1}\le M^2 (r_k-r_{k+1})$ for all $k\ge N$. This implies \[
        r_{k}\le \left(\frac{M^2}{M^2+1}\right)^{k - N}r_N,\quad \forall\, k\ge N.\]
\end{itemize}

We thus complete the proof for the theorem.
\end{proof}
We remark here
that the sublinear convergence rate \eqref{eq:rate1} holds in all cases as the KL inequality  holds with exponent $3/4$ for all cases. Note that the assumption in Theorem \ref{thm:rate} that {projected gradient methods} converge to a global optimal solution is not restrictive at all. As is mentioned in the introduction  and the beginning of this section, the assumption can be guaranteed as long as the starting point of the projection gradient method is properly chosen \cite{beck2018globally}.
Moreover, it is also noted in \cite{beck2018globally} that the initial point can be obtained without much difficulty.
Although the iteration complexity results derived in Theorem \ref{thm:rate} only holds locally around the optimal solution $\x^*$, by using similar ideas in \cite{han2017linear}, one can directly extend these results to a global version.
{To see this,  let $N$ be the positive integer in Theorem \ref{thm:rate}.
Note from the proof of Theorem \ref{thm:rate} that $\{f(\x_k)\}_{k\ge 0}$ is non-increasing. Then, we have that for all $1\le k\le N$,
\[
0 \le f(\x_k)-f^* \le  f(\x_0) - f^* \le
\min \left\{\frac{N^2( f(\x_0) - f^*)}{k^2} , (f(\x_0) - f^*) (\frac{M^2}{M^2 + 1})^{k-N} \right\}.
\]
Hence {it follows from Theorem \ref{thm:rate}} there exist constants $C_1, C_2 >0$
such that
\begin{equation*}
\label{eq:complexity_global}
f(\x_k) - f(\x^*) \le
\left\{
\begin{array}{ll}
 \frac{C_1}{k^2}, &\text{for the \emph{TRS-ill case} \eqref{eq:con}},\\
 C_2\, (\frac{M^2}{M^2 + 1})^{k}, &\text{otherwise,}
\end{array}
\right.\quad \forall\, k\ge 1.
\end{equation*}
{Note that constants $C_1,C_2$ depend on the choice of the initial point and can be hard to estimate explicitly.}
In light of Theorem \ref{thm:main}, we can further derive the convergence rate associated with $\dt(\x_k,S_0)$:
\begin{equation*}
\label{eq:complexity_global_dist}
\dt(\x_k,S_0)\le\left\{
\begin{array}{ll}
 \frac{\tau_{EB}C_1^{1/4}}{\sqrt k}, &\text{for the \emph{TRS-ill case} \eqref{eq:con}},\\%[5pt]
 \tau_{EB} \sqrt{C_2}\,(\frac{M^2}{M^2 + 1})^{k/2}, &\text{otherwise,}
\end{array}
\right. \quad \forall\, k\ge 1,
\end{equation*}
where $\tau_{EB} >0$ is the error bound modulus in Theorem \ref{thm:main}. We also note that the projected gradient method with backtracking line search can also be analyzed in a similar way, as  \eqref{eq:con2} still holds with a little more conservative constant, and thus we omit its analysis for simplicity.
\begin{rem}
It would be interesting to compare Theorem \ref{thm:rate} with the results obtained in \cite{zhang2017generalized} and \cite{carmon2018analysis}, which demonstrated that the generalized Lanczos trust-region (GLTR)\ method has a linear convergence rate for the easy case.
While we have proved that {projected gradient methods} converge sublinearly at rate $O(1/k^2)$ for the \emph{TRS-ill case}, and linearly otherwise (including the easy case).
\end{rem}
\begin{rem}
Based on the established KL inequality for the TRS in Theorem \ref{cor}, the same order of the convergence rate for the gap $f(\x_k)-f(\x^*)$ and $\dt(\x_k,S_0)$ can also be obtained by using the arguments in \cite{frankel2015splitting}. However,  our proof here is simpler and has an explicit dependence of the constants.
\end{rem}

\section{Conclusion}
In this paper, we conducted a thorough analysis about H\"olderian error bounds   and the KL inequality associated with the TRS. Specifically,
 we showed that for the TRS, a H\"olderian error bound holds with modulus 1/4 and  the KL inequality  holds with exponent 3/4.
 Moreover, we demonstrated that the H\"olderian error bound modulus and the KL exponent in fact are both 1/2 unless in the \emph{TRS-ill case} \eqref{eq:con}.
 Given these results, we further proved that {projected gradient methods} for solving the TRS enjoy a local sublinear convergence rate $O(1/k^2)$, which can be further improved to a linear rate unless in the \emph{TRS-ill case}.

\section*{Acknowledgements}
We would like to thank Professor Jong-Shi Pang at University of Southern California for his helpful comments on an early version of this paper.

\section{Appendix}

\subsection{{Complementary} proof for Lemma \ref{lem:hard2ii}}
Here, we provide a supporting lemma that helps prove Lemma \ref{lem:hard2ii}.
With the same notation $\alpha,\beta$ in \eqref{eq:defabP} and $H(\x)$ in \eqref{eq:defH}, recall that $\|\s\|=\|\P^T\x^*\|=1$ and $\{\d_i\}_{i=1}^n$ in \eqref{eq:kD}.
We have the following lemma.
\begin{lem}
        \label{lem:funcPsi}
        It holds that for all $\z\in\R^n$
        \[
        H(\x)= \sum_{i=1}^K \s_i^2\sum_{j= K+1}^{n}  (\d_j \z_j)^2
         + \sum_{K+1\le i < j \le n}(\d_i\z_i \s_j - \d_j\z_j \s_i)^2 - 2\alpha\beta-\beta^2.
        \]
        Moreover, if  $\s\not\perp {\rm Null}(\D)$, it holds that
        \begin{equation}
        \label{eq:psizs}
        H(\x)\ge \frac{1}{2}\d_{K+1}(\sum_{j=1}^K \s_j^2) \beta
        \end{equation}
        for all $\z$ with sufficiently small $\norm{\z}$.
\end{lem}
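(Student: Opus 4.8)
The plan is to recognize the first formula as a bookkeeping rewrite of the Lagrange (Cauchy--Binet) identity, and then to obtain \eqref{eq:psizs} by discarding the nonnegative pieces and absorbing the cross terms into a neighbourhood of $\x^*$.

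\textbf{Step 1 (closed form).} By \eqref{eq:defH} we have $H(\x)=\norm{\D\z}^2-(\alpha+\beta)^2=\norm{\D\z}^2-\alpha^2-2\alpha\beta-\beta^2$, so it suffices to expand $\norm{\D\z}^2-\alpha^2$. Since $\d_i=0$ for $i\le K$, both $\norm{\D\z}^2=\sum_{j=K+1}^n(\d_j\z_j)^2$ and $\alpha=\sum_{j=K+1}^n\d_j\z_j\s_j$ involve only the coordinates $K+1,\dots,n$. Applying Lagrange's identity to the two vectors $(\d_j\z_j)_{j=K+1}^n$ and $(\s_j)_{j=K+1}^n$ gives
\[
\Big(\sum_{j=K+1}^n(\d_j\z_j)^2\Big)\Big(\sum_{j=K+1}^n\s_j^2\Big)-\alpha^2=\sum_{K+1\le i<j\le n}(\d_i\z_i\s_j-\d_j\z_j\s_i)^2 .
\]
Now $\sum_{j=K+1}^n\s_j^2=\norm{\s}^2-\sum_{i=1}^K\s_i^2=1-\sum_{i=1}^K\s_i^2$, so the left-hand side equals $\norm{\D\z}^2-\big(\sum_{i=1}^K\s_i^2\big)\norm{\D\z}^2-\alpha^2$; rearranging, substituting $\norm{\D\z}^2=\sum_{j=K+1}^n(\d_j\z_j)^2$, and plugging back into $H(\x)$ gives exactly the asserted identity.

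\textbf{Step 2 (lower bound).} Assume $\sum_{i=1}^K\s_i^2>0$. Discarding the nonnegative term $\sum_{K+1\le i<j\le n}(\d_i\z_i\s_j-\d_j\z_j\s_i)^2$ and using $\d_j\ge\d_{K+1}>0$ for $j\ge K+1$, we get $\sum_{j=K+1}^n(\d_j\z_j)^2\ge\d_{K+1}\sum_{j=K+1}^n\d_j\z_j^2=\d_{K+1}\beta$, hence
\[
H(\x)\ \ge\ \Big(\d_{K+1}\sum_{i=1}^K\s_i^2-2|\alpha|-\beta\Big)\beta ,
\]
where we also used $\beta=\sum_{j=K+1}^n\d_j\z_j^2\ge0$. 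Finally $\alpha=\inprod{\s}{\D\z}$ and $\beta=\inprod{\z}{\D\z}$ both tend to $0$ as $\z\to\mathbf{0}$, while $\d_{K+1}\sum_{i=1}^K\s_i^2$ is a fixed positive number; thus for $\norm{\z}$ small enough, $2|\alpha|+\beta\le\tfrac12\d_{K+1}\sum_{i=1}^K\s_i^2$, which together with $\beta\ge0$ yields \eqref{eq:psizs}.

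\textbf{Main obstacle.} There is no real analytic difficulty here: the only points requiring care are (i) restricting the Lagrange identity to the index block $\{K+1,\dots,n\}$ on which $\D$ is invertible, while keeping track of $\norm{\s}=1$, and (ii) noting that every term discarded in Step~2 is nonnegative, so that the crude bound survives. The hypothesis $\s\not\perp\Null(\D)$ enters only to guarantee that the leading coefficient $\d_{K+1}\sum_{i=1}^K\s_i^2$ is strictly positive, which is exactly what makes the resulting bound linear in $\beta$ useful when it is later fed into Lemma~\ref{lem:interiorKL}.
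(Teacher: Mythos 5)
Your proposal is correct and follows essentially the same route as the paper: expand $H(\x) = \norm{\D\z}^2 - \alpha^2 - 2\alpha\beta - \beta^2$, apply Lagrange's identity (you restrict it to the block $\{K+1,\dots,n\}$ and then account for $\norm{\s}=1$; the paper applies it over all of $\{1,\dots,n\}$ and lets the $\d_i=0$ block drop out—the same algebra), and then derive \eqref{eq:psizs} by discarding the nonnegative cross-sum, using $\d_j\ge\d_{K+1}$ to produce $\d_{K+1}\beta$, and invoking $\alpha,\beta\to 0$ as $\z\to\mathbf{0}$. The only cosmetic deviation is your use of $2|\alpha|$ instead of $2\alpha$; since $\beta\ge 0$ and both $\alpha$ and $\beta$ vanish near $\z=\mathbf{0}$, this changes nothing.
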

\begin{proof}
By some calculations, we see that
\begin{align*}
  H(\x) ={}& \sum_{i=1}^n (\d_i\z_i)^2 - \big(\sum_{i=1}^{n} \s_i \d_i \z_i\big)^2 - 2\alpha\beta-\beta^2\\
={}&\sum_{i=1}^n \sum_{j=1}^n \s_j^2 (\d_i \z_i)^2 - \big(\sum_{i=1}^{n} \s_i \d_i \z_i\big)^2 - 2\alpha\beta-\beta^2 \\
={}& \sum_{1\le i <j \le n}(\d_i\z_i \s_j - \d_j\z_j \s_i)^2 - 2\alpha\beta-\beta^2\\
={}&\sum_{i=1}^K \sum_{j=K+1}^{n}  (\d_i\z_i \s_j - \d_j\z_j \s_i)^2
+ \sum_{i=1}^K \sum_{j=i+1}^{K}  (\d_i\z_i \s_j - \d_j\z_j \s_i)^2\\
&+ \sum_{K+1\le i < j \le n}(\d_i\z_i \s_j - \d_j\z_j \s_i)^2 - 2\alpha\beta-\beta^2\\
={}& \sum_{i=1}^K \s_i^2\sum_{j=K+1}^{n}  (\d_j \z_j)^2
+ \sum_{K+1\le i < j \le n}(\d_i\z_i \s_j - \d_j\z_j \s_i)^2 - 2\alpha\beta-\beta^2,
\end{align*}
where the second equation holds since $\norm{\s} = 1$,
the third equation follows from the fact that $\sum_{1\le i,j\le n} a_i^2 b_j^2 - (\sum_{1\le i \le n} a_ib_i)^2 = \sum_{1\le i <j \le n} (a_ib_j - a_jb_i)^2$,
and the last equation is due to $\d_i =0$ for $i=1,\ldots,K$.

 Since $\sum_{i=K+1}^n\d_i^2 \z_i^2 \ge \sum_{i=K+1}^n \d_{K+1} \d_i \z_i^2 = \d_{K+1}\beta$, we obtain that
 \[
H(\x)\ge (\d_{K+1} \sum_{j=1}^K\s_j^2 - 2\alpha-\beta)\beta, \quad \forall\, \z \in \R^n.
 \]
If, further, $\s\not\perp {\rm Null}(\D)$, together with $\d_{K+1} >0$, we know that
 $\d_{K+1} \sum_{j=1}^K \s_j^2 >0$.
 This together with the fact that $\alpha\to 0$ and $\beta\to 0$ as $\z\to {\bf 0}$, implies that \eqref{eq:psizs} holds when $\norm{\z}$ is sufficiently small. We thus complete the proof.
\end{proof}

\subsection{Proof of Lemma \ref{lem:KLld}}
\begin{proof}
	Recall the spectral decomposition of $\A$ and the definition of the diagonal matrix $\D$ as:
	\[
	\A = \P{\mathbf \Lambda}\P^T = \P {\rm Diag}(\lambda)\P^T, \quad
	\A - \lambda_1\I = \P \D \P^T, \quad \D = {\mathbf \Lambda} -\lambda_1 \I,
	\]
{where $\P \P^T = \P^T\P = \I$.}
	Define linear operators $\mathcal{L}_{K} :\R^{n} \to \R^{K}$ and $\mathcal{U}_{K}:\R^n \to \R^{n-K}$ as
	\[
	\mathcal{L}_{K}(\x) = [\x_{1}, \ldots, \x_{K}]^T, \quad \mathcal U_{K}(\x) = [\x_{K+1},\ldots, \x_n]^T, \quad \forall\, \x\in\R^n.
	\]
	Let $\M = {\rm Diag}(\mathcal U_{K}(\lambda))$ where $\lambda \in \R^n$ is {the vector formed by all the eigenvalues} of $\A$ in ascending order. Define function $q$ as
	\[
	q(\z) := \inprod{\z}{\M\z} - 2\inprod{\mathcal U_{K}(\P^T \b)}{\z}, \quad \forall \z \in \R^{n-K}.
	\]
 Consider the following optimization problem
	\begin{equation}
	\label{prob:ming}
	\min \left\{
	q(\z) \mid \z \in \widehat{B} \right\} \mbox{ with } \widehat{B} : = \left\{ \z \in \R^{n - K} \mid \norm{\z} \le 1 \right\}.
	\end{equation}
	We show next that $\mathcal U_{K}(\P^T \x^*)$ is an optimal solution to \eqref{prob:ming}.
Since  $\b, \x^*\in {\rm Range}(\A - \lambda_1 \I)$, we know that $\P^T \b, \P^T \x^* \in {\rm Range}({\mathbf \Lambda} - \lambda_1\I)$.
Note that since the first $K$ diagonal entries of the diagonal matrix ${\mathbf \Lambda} - \lambda_1\I$ are zeros, we have $\mathcal L_{K}(\P^T \x^*)) =\mathcal L_{K}(\P^T\b) = \bf0$ and $\norm{\mathcal U_{K}(\P^T \x^*)} = 1$.
Moreover, since $(\A -\lambda_1 \I)\x^* = \b$, we have  $({\mathbf \Lambda} - \lambda_1\I)(\P^T \x^*) = \P^T \b $.
Therefore, $ (\M - \lambda_1 \I) \mathcal U_K(\P^T\x^*) = \mathcal U_{K}(\P^T\b)$, i.e., $\mathcal U_{K}(\P^T \x^*)$ and $-\lambda_1$ solve the optimality conditions (see the KKT conditions in Section \ref{sec:pre}) associated with the smaller dimensional TRS \eqref{prob:ming}.
	Since $\lambda_{K+1}$ is the smallest eigenvalue of $\M$ and {an optimal Lagrangian multiplier is} $-\lambda_1> -\lambda_{K+1}$, we know that problem  \eqref{prob:ming} falls into the easy case or the hard case 1. Hence, by Lemma \ref{lem:KLeasyhard1}, we have that there exist $\epsilon_2, \tau >0$ such that for all $\z \in B({\cal U}_K(\P^T\x^*), \epsilon_2)$,
	\begin{equation}
	\label{eq:klqz}
	[q(\z) - q(\mathcal U_k(\P^T\x^*)) + \delta_{\widehat{B}}(\z)]^{1/2} \le \tau {\rm dist}(-\nabla q(\z), N_{\widehat{B}}(\z)).
	\end{equation}
	Note that for all $\x \in {\rm Range}(\A - \lambda_1\I)$, it holds that $\mathcal L_K(\P^T\x) = \bf0$ and $\norm{{\cal U}_K(\P^T \x)} = \norm{\P^T \x} = \norm{\x}$.
	Then, for all $\x\in {\rm Range}(\A - \lambda_1\I)$ with $\norm{\x} < 1$, we know that $\norm{{\cal U}_K(\P^T\x)} < 1$, $N_{\widehat{B}}({\cal U}_K(\P^T\x)) =\bf 0$, and
	\begin{equation}
	\label{eq:distqin}
	\begin{aligned}
	{\rm dist}(-\nabla q(\mathcal U_K(\P^T \x)), N_{\widehat{B}}(\mathcal U_K(\P^T \x))) ={}& \norm{\nabla q(\mathcal U_K(\P^T \x))} = \norm{2(\M\mathcal U_K(\P^T \x) - \mathcal U_K(\P^T \b))} \\[2pt]
	={}&\norm{2({\mathbf \Lambda} \P^T\x - \P^T\b)} = \norm{2(\P{\mathbf \Lambda}\P^T\x -\b} \\[2pt]
	={}& \norm{2(\A\x - \b)} = {\rm dist}(-\nabla f(\x), N_B(\x)).
	\end{aligned}
	\end{equation}
	Meanwhile, for all $\x \in {\rm Range}(\A - \lambda_1\I)$ with $\norm{\x} = 1$, it holds  $\norm{{\cal U}_K(\P^T \x)} = 1$ and
	\[
	\norm{\mathcal U_K(\P^T\x) - \mathcal U_K(\P^T \x^*)} = \norm{\P^T\x - \P^T \x^*} =
	\norm{\x - \x^*}.
	\]
{Then for all $\x \in {\bf bd}(B) \cap {\rm Range}(\A - \lambda_1\I) $ satisfying $\norm{\x - \x^*}\le \epsilon: = \min\left\{\epsilon_0,\epsilon_2 \right\}$, we further have}
	\begin{equation}
	\label{eq:distqdistf}
	\begin{aligned}
	&{\rm dist}(-\nabla q(\mathcal U_K(\P^T \x)), N_{\widehat{B}}(\mathcal U_K(\P^T \x))) \\
	={}& \norm{2(\M\mathcal U_K(\P^T \x) - \mathcal U_K(\P^T \b)) +
		\inprod{-2(\M\mathcal  U_K(\P^T \x) -\mathcal  U_K(\P^T \b))}{\mathcal U_K(\P^T \x)}\mathcal U_K(\P^T \x) }\\
	={}&\norm{2({\mathbf \Lambda} \P^T \x - \P^T\b) + \inprod{-2({\mathbf \Lambda} \P^T \x - \P^T\b)}{\P^T \x} \P^T \x}\\
	={}&\norm{2(\P {\mathbf \Lambda} \P^T\x - \b) + \inprod{-2(\P{\mathbf \Lambda} \P^T\x - \b)}{\x} \x} \\
	={}&\norm{2(\A\x - \b) + \inprod{-2(\A\x - \b)}{\x}\x}\\
	={}&{\rm dist}(-\nabla f(\x), N_B(\x)),
	\end{aligned}
	\end{equation}
	where $\epsilon_0$ is given in \eqref{eq:eps0}, the first and last equations follow from \eqref{eq:distnorm1}, \eqref{eq:nux}, \eqref{eq:eps0} and
$$\inprod{-2(\M\mathcal  U_K(\P^T \x)-\mathcal  U_K(\P^T \b))}{\mathcal U_K(\P^T \x)}=\inprod{-2(\A\x - \b)}{\x}>0,$$
and the third equation holds since $\P$ is an orthogonal matrix.
	Since $q(\mathcal U_k(\P^T\x^*)) = f(\x^*) = f^*$ and
	$q(\mathcal U_k(\P^T \x)) = f(\x)$ for all $\x \in {\rm Range}(\A - \lambda_1 \I)$, we know from \eqref{eq:klqz}, \eqref{eq:distqin} and \eqref{eq:distqdistf} that
	\begin{equation*}\label{eq:hd2ieasy}
	\big(f(\x) - f(\x^*) + \delta_B(\x)\big)^{1/2} \le \tau {\rm dist}(-\nabla f(\x), N_B(\x)),
	\end{equation*}
	for all
	$\x\in B(\x^*,\epsilon) \cap {\rm Range}(\A - \lambda_1 \I)$.
\end{proof}

\subsection{Complementary proof for Lemma \ref{lem:hard2i}}\label{app:dim2}
As a complementary proof for Lemma \ref{lem:hard2i}, we use the same notation as in the main proof. Here, we consider  {{\bf Case II} }  with $\dim(\ra(\D)) \le 2$.  {Recall that we are in case where}  $\|\s\| = \|\s+\z\|=1$, $\alpha = \inprod{\s}{\D\z} > 0$,  $\|\z\|^2=\gamma < 1/4$, $\z = \z_1 + \z_2$ with $\z_1 \in \ra(\D)$ and $\z_2 \in {\rm Null}(\D)$ and $\norm{\z_2}\le c \sqrt{\gamma}$ with constant $c \le 1/16$ given in \eqref{eq:KLrange}.
It can also be verified that $\inprod{\s}{\z_1} = \inprod{\s}{\z} = -\gamma/2$.

We first claim that $\dim(\ra(\D))>1$. Indeed, if $\dim(\ra(\D))=1$, since $\s \in \ra(\D)$, it holds that $\ra(\D)={\rm span}\{\s\}$, i.e., the space spanned by $\{\s\}$.
Since $\z_1\in\ra(\D)$ and $\inprod{\s}{\z_1}  = -\gamma/2$, we have $\z_1 = -\gamma\s/2$.
Hence,
\[
\inprod{\s}{\D\z} = \inprod{\s}{\D\z_1} = -\frac{\gamma}{2} \inprod{\s}{\D\s} \overset{\eqref{eq:kD}}< 0.
\]
However, this contradicts  our assumption   that $\alpha = \inprod{\s}{\D\z} > 0$.
Hence, $\dim(\ra(\D)) = 2$.
Therefore, \eqref{eq:kD} reduces to
$$\d_n \ge \d_{n-1}>0=\d_{n-2}=\ldots=\d_1, \quad \mbox{  i.e., } \quad K = n-2.$$
Let $\e_i$, $i=1,\ldots,n$, be the standard basis of $\R^n$.
Then, we can represent $\s$ and $\z$ by the linear combinations of $\e_n$ and $\e_{n-1}$ as  $\s=u_1\e_{n-1}+u_2\e_{n}$ and $\z_1=w_1\e_{n-1}+w_2\e_{n}$, respectively.
Since $\norm{\s} = 1, \norm{\z}^2 = \gamma$ and $\norm{\z_2} \le c\sqrt{\gamma}$, we know that
\begin{equation}
\label{eq:z1norm}
u_1^2 + u_2^2 = 1, \quad \mbox{and} \quad \norm{\z_1}^2 = w_1^2 + w_2^2 = \gamma - \norm{\z_2}^2 \ge (1 - c^2)\gamma.
\end{equation}

We discuss two cases here, i.e., cases with $(u_2\d_nw_2)(u_1\d_{n-1}w_1)> 0$ and $(u_2\d_nw_2)(u_1\d_{n-1}w_1) \le 0$.
Suppose that $(u_2\d_nw_2)(u_1\d_{n-1}w_1)> 0$. By reducing  $\epsilon_0$ if necessary, we can assume without
loss of generality that
\begin{equation}
\label{eq:gammaeps1}
\sqrt{\gamma} \le \epsilon_1 = \min(\epsilon_0, 1/2) <  \min (|u_1|, |u_2|).
\end{equation}
Since $\s^T\z=u_1w_1+u_2w_2=-\gamma/2<0$, $\d_{n-1}>0$ and $\d_n>0$, we have $(u_2\d_nw_2)<0$ and $(u_1\d_{n-1}w_1)<0$.
From \eqref{eq:z1norm} and $c\le 1/16$, we know that
\[
|w_1| + |w_2| \ge \sqrt{w_1^2 + w_2^2} \ge \frac{1}{2} \sqrt{\gamma}.
\]
This further implies that
\begin{eqnarray*}
\s^T\z = u_2w_2+u_1w_1 &=& -|u_2||w_2| - |u_1||w_1|\\
	&\le& -\min(|u_1|,|u_2|)(|w_2|+|w_1|)\\
	&\le&-\frac{\min(|u_1|,|u_2|)\sqrt\gamma}{2} \\
&\overset{\eqref{eq:gammaeps1}}<& -\frac{\gamma}{2}.
\end{eqnarray*}
which contradicts the fact that $\s^T\z=-\gamma/2$.

Hence, we only need to consider the case with
$(u_2\d_nw_2)(u_1\d_{n-1}w_1)\le 0$. Then, it holds that
\begin{eqnarray}\label{eq:s1s2neq0}
\begin{array}{lll}
\|\D\z\|^2-\alpha^2&=&\d_n^2w_2^2+\d_{n-1}^2w_1^2-(u_2\d_nw_2+u_1\d_{n-1}w_1)^2\\
&\ge&(1-u_2^2)\d_n^2w_2^2+(1-u_1^2)\d_{n-1}^2w_1^2\\
&=&u_1^2 \d_{n}^2w_2^2+ u_2^2 \d_{n-1}^2w_1^2.
\end{array}
\end{eqnarray}
Suppose first that  {$\min(u_1^2,u_2^2)= 0$}. Without loss of generality, we can assume $u_1 = 0$, and thus $|u_2| = 1$. Since $\inprod{\s}{\z_1} = -\gamma/2 = u_2w_2$, we know that $w_2^2 = \gamma^2/4$, which, together with \eqref{eq:z1norm} and the fact that $\gamma \le 1/4$ and $c \le 1/16$, implies that
\begin{equation}
\label{eq:w12}
w_1^2 \ge (1 - c^2)\gamma - \gamma^2/4 \ge (1 - 1/16^2 - 1/16)\gamma \ge \frac{7}{8}\gamma.
\end{equation}
Meanwhile, it holds that
\[
\beta= \inprod{\z}{\D\z} = \inprod{\z_1}{\D\z_1} \le \d_n \norm{\z_1}^2 \le \d_n \gamma,
\]
which, together with $\eqref{eq:s1s2neq0}$ and $\eqref{eq:w12}$, implies that
\begin{equation}
\label{eq:u1u20alpha}
\|\D\z\|^2-\alpha^2 \ge \frac{7}{8}\d_{n-1}^2 \gamma
\ge  \frac{7\d_{n-1}^2}{8\d_n} \d_n \gamma \ge \frac{7\d_{n-1}^2}{8\d_n} \beta.
\end{equation}
Suppose now that  {$\min(u_1^2,u_2^2)\neq 0$}. Then, from \eqref{eq:s1s2neq0}, we know that
\begin{equation}
\label{eq:u1u2n0alpha}
\begin{array}{lll}
\|\D\z\|^2-\alpha^2 &\ge{}& \min(u_1^2, u_2^2) (\d_n^2 w_2^2 + \d_{n-1}^2 w_1^2 ) \\
&\ge{} &  \min(u_1^2, u_2^2) \d_{n-1} (\d_n w_2^2 + \d_{n-1} w_1^2 ) \\
&=& \min(u_1^2, u_2^2) \d_{n-1} \beta.
\end{array}
\end{equation}
Combining \eqref{eq:u1u20alpha} and \eqref{eq:u1u2n0alpha}, we conclude that if $\dim(\ra(\D))=2$, it holds that
\begin{equation*}\label{eq:dim2}
\|\D\z\|^2-\alpha^2\ge c_2\beta,
\end{equation*}
where $c_2=\min\{\frac{7\d_{n-1}^2}{8\d_n},\min(u_1^2, u_2^2) \d_{n-1}\}$.
Since $\alpha\to0$ and $\beta\to0$ as $\gamma\to 0$ (or equivalently, $\x \to \x^*$), by reducing  $\epsilon_0$ if necessary, we know in {{\bf Case II}} if $\dim(\ra(\D))=2$ that
\begin{equation}
\label{eq:hc2i2adim2}
\frac{1}{4}{\rm dist}^2(-\nabla f(\x), N_B(\x)) = H(\x)=\|\D\z\|^2-\alpha^2-2\alpha\beta-\beta^2\ge \beta(c_2 - 2\alpha - \beta) \ge \frac{c_2}{2}\beta,
\end{equation}
for all $\x \in {\bf bd}(B)$ with $\norm{\x - \x^*}\le \epsilon_1$.

\bibliographystyle{abbre}

\end{document}